\DeclareMathAlphabet{\mathbbold}{U}{bbold}{m}{n}
\def\k{\mathbbold{k}}
\DeclareSymbolFont{rsfscript}{OMS}{rsfs}{m}{n}
\DeclareSymbolFontAlphabet{\mathrsfs}{rsfscript}
\DeclareFontFamily{OMS}{rsfs}{\skewchar\font'177}
\DeclareFontShape{OMS}{rsfs}{m}{n}{%
      <5> rsfs5
      <6> <7> rsfs7
      <8> <9> <10> rsfs10
      <10.95> <12> <14.4> <17.28> <20.74> <24.88> rsfs10
      }{}
\def\calA{\mathrsfs{A}}
\def\calD{\mathrsfs{D}}
\def\calF{\mathrsfs{F}}
\def\calG{\mathrsfs{G}}
\def\calM{\mathrsfs{M}}
\def\calO{\mathrsfs{O}}
\def\calP{\mathrsfs{P}}
\def\calQ{\mathrsfs{Q}}
\def\calR{\mathrsfs{R}}
\def\calV{\mathrsfs{V}}
\def\sfA{\mathsf{bv}}
\DeclareMathOperator{\id}{id}
\DeclareMathOperator{\lt}{lt}
\DeclareMathOperator{\As}{As}
\DeclareMathOperator{\AntiCom}{AntiCom}
\DeclareMathOperator{\End}{End}
\DeclareMathOperator{\Tor}{Tor}
\DeclareMathOperator{\Id}{Id}
\DeclareMathOperator{\Hycom}{Hycom}
\DeclareMathOperator{\Grav}{Grav}
\theoremstyle{plain}
\newtheorem {theorem}{Theorem}
\newtheorem {lemma}{Lemma}[section]
\newtheorem {corollary}{Corollary}
\newtheorem {proposition}{Proposition}[section]
\theoremstyle{definition}
\newtheorem {definition}{Definition}
\newtheorem {remark}{Remark}
\newtheorem {example}{Example}
\newcommand{\ldot}{{\:\raisebox{1.5pt}{\selectfont\text{\circle*{1.5}}}}}
\newcommand{\udot}{{\:\raisebox{3pt}{\selectfont\text{\circle*{1.5}}}}}
\begin{document}

\title{Free resolutions via Gr\"obner bases}
\author{Vladimir Dotsenko}
\address{School of Mathematics, Trinity College, Dublin 2, Ireland}
\email{vdots@maths.tcd.ie}
\author{Anton Khoroshkin}
\address{Departement Matematik, ETH, R\"amistrasse 101, 8092 Zurich, Switzerland and 
ITEP, Bolshaya Cheremushkinskaya 25, 117259, Moscow, Russia}
\email{anton.khoroshkin@math.ethz.ch}

\thanks{The first author's research was supported by the grant RFBR-CNRS-07-01-92214 and by an IRCSET research fellowship. The second author's research was supported by grants
MK-4736.2008.1, NSh-3035.2008.2, RFBR-07-01-00526, RFBR 08-01-00110, RFBR-CNRS-07-01-92214, and by a ETH research fellowship.
}

\begin{abstract}
For associative algebras in many different categories, it is possible to develop the machinery of Gr\"obner bases. A Gr\"obner basis of defining relations for an algebra of such a category provides a ``monomial replacement'' of this algebra. The main goal of this article is to demonstrate how this machinery can be used for the purposes of homological algebra. More precisely, our approach goes in three steps. First, we define a combinatorial resolution for the monomial replacement of an object. Second, we extract from those resolutions explicit representatives for homological classes. Finally, we explain how to ``deform'' the differential to handle the general case. For associative algebras, we recover a well known construction due to Anick. The other case we discuss in detail is that of operads, where we discover resolutions that haven't been known previously. We present various applications, including a proofs of Hoffbeck's PBW criterion, a proof of Koszulness for a class of operads coming from commutative algebras, and a homology computation for the operads of Batalin--Vilkovisky algebras and of Rota--Baxter algebras.
\end{abstract}

\maketitle

\section{Introduction}

\subsection{Description of results}

For the purposes of homological and homotopical algebra, it is often important to have (quasi-)free resolutions for associative algebras (and their generalisations in various monoidal categories). One resolution readily available for a generic associative algebra is obtained by iterating the bar construction (to be more precise, by a cobar-bar construction), however, sometimes it is preferable (and possible) to have a much smaller resolution. The so called minimal resolution has the homology of the bar complex as its space of generators; that homology is also of independent interest because it describes higher syzygies of the given associative algebra (relations, relations between relations etc.). According to the general philosophy of homotopical algebra~\cite{QuillenComAlg,QuillenHomotopical}, homology of the differential induced on the space of indecomposable elements of a free resolution~$(\calF,d)\to A$ of the given associative algebra~$A$ does not depend on the choice of a resolution, and this homology coincides with the homology of the differential induced on the space of generators of a resolution of the trivial~$A$-module by free right $A$-modules. 

One of most important practical results provided (in many different frameworks) by Gr\"obner bases is that when dealing with various linear algebra information (bases, dimensions etc.) one can replace an algebra with complicated relations by an algebra with monomial relations without losing any information of that sort. When it comes to questions of homological algebra, things become more subtle, since (co)homology may ``jump up'' for a monomial replacement of an algebra. However, the idea of applying Gr\"obner bases to problems of homological algebra is far from hopeless. It turns out that for monomial algebras it is often possible to construct very neat resolutions that can be used for various computations; furthermore, the data computed by these resolutions can be used to obtain results in the general (not necessarily monomial) case. The mail goal of this paper is to explain this approach in detail for computations of the bar homology.

In the case of usual associative algebras, this approach has been known since the celebrated paper of Anick~\cite{Anick} where in the case of monomial relations a minimal right module resolution of the trivial module was computed, and an explicit way to deform the differential was presented to handle the general case. Later, the Anick's resolution was generalized to the case of categories by Malbos~\cite{Malbos} who also asked whether this work could be extended to the case of operads. In this paper, we answer that question, and propose a framework that allows to handle associative algebras presented via generators and relations in many different monoidal categories in a uniform way. Our approach goes as follows. We begin with a resolution which is sometimes larger than the one of Anick, but has the advantage of not using much information about the underlying monoidal category. It is based on the inclusion--exclusion principle, and is in a sense a version of the cluster method of enumerative combinatorics due to Goulden and Jackson~\cite{GJ}. Once the inclusion--exclusion resolution is constructed, we find explicit combinatorial formulas for homological classes of algebras with monomial relations. In the case of algebras, this immediately recovers ``chains'', as defined by Anick. This is followed, in the spirit of the Anick's approach, of how to adapt the differential of our resolution to incorporate lower terms of relations and handle arbitrary algebras with known Gr\"obner bases. 

Our main motivating example is the case of (symmetric) operads. In~\cite{DK}, we introduced new type of monoids based on nonsymmetric collections, shuffle operads, to develop the machinery of Gr\"obner bases for symmetric operads. Basically, there exists a monoidal structure on nonsymmetric collections (shuffle composition) for which the forgetful functor from symmetric collections is monoidal, and this reduces many computations in the symmetric category to those in the shuffle category. The shuffle category provides precisely what is needed to define operadic Gr\"obner bases, so our approach applies. Other categories where our methods are applied without much change are commutative associative algebras, associative dialgebras, (shuffle) coloured operads, dioperads, $\frac12$PROPs etc. We shall discuss details elsewhere. 

There are various applications of our approach; some of them are presented in this paper. Two interesting theoretical applications are a new short proof of Hoffbeck's PBW criterion for operads \cite{Hoffbeck}, and an upper bound on the homology for operads obtained from commutative algebras; in particular, we prove that an operad obtained from a Koszul commutative algebra is Koszul. Some interesting concrete examples where all steps of our construction can be completed are the case of the operad $RB$ of Rota--Baxter algebras, its noncommutative analogue~$ncRB$, and, the last but not the least, the operad~$BV$ of Batalin--Vilkovisky algebras. Using our methods, we were able to compute the bar homology of $BV$ and relate it to the gravity operad of Getzler~\cite{Getzler1}. While preparing our paper, we learned that these (and other) results on~$BV$ were announced earlier by Drummond-Cole and Vallette (see the extended abstract~\cite{DCVOberwolfach}, the slides~\cite{ValBV}, and the forthcoming paper~\cite{DCV}). Their approach relies, on one hand, on on theorems of Galvez-Carillo, Tonks and Vallette~\cite{GTV} who studied the operad $BV$ as an operad with nonhomogeneous (quadratic--linear) relations, and, on the other hand, on some new results in operadic homotopical algebra, in particular, a homotopy transfer theorem for infinity-cooperads. Our methods appear to be completely different: we treat~$BV$ as an operad with homogeneous relations of degrees $2$ and~$3$, and apply the Gr\"obner bases approach. We hope that our approach to the operad~$BV$ is also of independent interest as an illustration of a rather general method to compute the bar homology.

\subsection{Outline of the paper}
This paper is organised as follows. 
 
In Section~\ref{assoc_alg}, we handle (usual) associative algebras with monomial relations. We construct a free resolution for such algebras, and then propose a way to choose explicit representatives of homology classes. It turns out that we end up with the celebrated construction of Anick. We also explain how to ``deform'' the boundary maps in our resolutions for monomial algebras to obtain resolutions in the general case of a homogeneous Gr\"obner basis. Even though the corresponding homological results for algebras are very well known, we make a point of elaborating on our strategy since it turns out to be fruitful in a very general setting. We also discuss an interesting example of an algebra which exhibits most effects we shall later discover in the operad~$BV$. This algebra also appears to provide a counterexample to a theorem of Farkas~\cite{Farkas} on the structure of the Anick resolution.

In Section~\ref{operads}, we demonstrate how to apply our method to shuffle operads (primarily having in mind applications to symmetric operads, as mentioned above). We provide the readers with background information on tree monomials, the replacement for monomials in the case of free shuffle operads, and explain how to adapt all the results previously obtained in the case of algebras to the case of operads. Throughout this section, we apply our methods to a particular example, computing dimensions for the low arity homology of the operad of anti-associative algebras, and some low degree boundary maps in the corresponding resolution.
 
In Section~\ref{appl}, we exhibit applications of our results outlined above (a new proof of the PBW criterion, homology estimates for operads coming from commutative algebras, and a computation of the bar homology for the operads~$RB$, $ncRB$, and $BV$).

All vector spaces and (co)chain complexes throughout this work are defined over an arbitrary field~$\k$ of zero characteristic. 

\subsection{Acknowledgements}

We are thankful to Fr\'ed\'eric Chapoton, Iain Gordon, Ed Green, Eric Hoffbeck, Jean--Louis Loday, Dmitri Piontkovskii and Emil Sk{\"o}ldberg for useful discussions, remarks on a preliminary version of this paper and references to literature. Special thanks are due to Li Guo for drawing our attention to Rota--Baxter algebras as an example to which our methods could be applied, and especially to Bruno Vallette for many useful discussions and in particular for explaining the approach to the operad~$BV$ used in his joint work with Gabriel Drummond-Cole.

\section{Associative algebras}\label{assoc_alg}

\subsection{Inclusion-exclusion resolution for monomial algebras}\label{ass_monom}

In this section, we discuss the case of associative algebras with monomial relations. We start with an algebra $R=\k\langle x_1,\ldots,x_n\rangle/(g_1,\ldots,g_m)$ with $n$ generators and $m$ relations, each of which is a monomial in the given generators. We work under the assumption that none of the monomials $g_1,\ldots,g_m$ is divisible by another; this, for example, is the case when $G$ is the set of leading monomials of a reduced Gr\"obner basis of some algebra for which~$R$ is a monomial replacement. 

Let us denote by $A(p,q)$ the vector space whose basis is formed by elements of the form 
$x_I\otimes S_1\wedge S_2\wedge\ldots\wedge S_q$ where $I=(i_1,\ldots,i_p)\in[n]^p$,  $x_I=x_{i_1}\otimes\ldots\otimes x_{i_p}$ is the corresponding monomial, and $S_1,\ldots,S_q$ are (in one-to-one correspondence with) certain divisors $x_{i_r}x_{i_{r+1}}\ldots x_{i_s}$ of this monomial. Each $S_i$ is thought of as a  symbol of homological degree~$1$, with the appropriate Koszul sign rule for wedge products (here $q \ge 0$, so the wedge product might be empty). As we shall see later, in the classical approach for associative algebras there is no need in wedge products because there exists a natural linear ordering on all divisors of the given monomial. However, we introduce the wedge notation here as it becomes crucial for the general case (e.g. for operads). 

For each $p$ the graded vector space $A(p)=\bigoplus_q A(p,q)$ is a chain complex with the differential given by the usual formula with omitted factors:
\begin{equation}
d(x_I\otimes S_1\wedge \ldots\wedge S_q)= \sum_{l} (-1)^{l-1} x_I\otimes S_1\wedge\ldots\wedge\hat{S_l}\wedge\ldots\wedge S_q.  
\end{equation}

Moreover, there exists a natural algebra structure on~$A=\bigoplus_{p,q}A(p,q)$:
\begin{multline}
(x_I\otimes S_1\wedge \ldots\wedge S_q)\cdot
(x_{I'}\otimes T_1\wedge \ldots\wedge T_{q'}) \\
= (x_Ix_{I'})\otimes \imath(S_1)\wedge \ldots\wedge \imath(S_q)\wedge \imath(T_1)\wedge \ldots\wedge \imath(T_{q'}),
\end{multline}
where $\imath$ identifies divisors of $x_I$ and $x_{I'}$ with the corresponding divisors of $x_Ix_{I'}$.
The differential~$d$ makes~$A$ into an associative dg-algebra. 

It is important to emphasize that the symbols $S_i$ correspond to divisors, i.e. occurrences of monomials in $x_I$ rather than monomials themselves, so in particular the Koszul sign rule does not imply that  elements of our algebra square to zero. The following example should make our construction more clear.

\begin{example}
Assume that our algebra $R$ has just one generator~$x$. Then the algebra $A$ has an element $x\otimes S$ where $S$ corresponds to the divisor of the monomial $x$ equal to $x$ itself. We have
 $$
(x\otimes S)(x\otimes S)=x^2\otimes S_1\wedge S_2, 
 $$
where $S_1$ and $S_2$ indicate the two different divisors of $x^2$ equal to~$x$ (and the ``naive'' result of multiplication giving $x^2\otimes S\wedge S=0$ does not make sense at all, because $S$ corresponds to a divisor of $x$, not of $x^2$). Basically, when computing products, the $S$-symbols ``remember'' which divisors of factors they come from.
\end{example}

So far we did not use the relations of our algebra. Let us incorporate relations in the picture. We denote by $G=\{g_1,\ldots,g_m\}$ the set of relations of our algebra, and by $A_G$ the subspace of $A$ spanned by all elements $x_I\otimes S_1\wedge\ldots\wedge S_q$ for which the divisor corresponding to $S_j$ coincides, for every~$j$, with one of the relations from~$G$. This subspace is stable under product and differential, i.e. is a dg-subalgebra of~$A$. 

\begin{example}
Let us consider one of the simplest cases of an associative algebra, that of dual numbers: $R=k[x]/(x^2)$. Here $m=n=1$, $g_1=x^2$. Let us list all monomials in the corresponding algebra~$A$. Such a monomial is of the form $x^n\otimes \mathbf{S}$, where $\mathbf{S}$ is a wedge product of symbols corresponding to some of the divisors of $x^n$ equal to $x^2$. There are $n-1$ such divisors, the one covering the first two letters~$x$, the one covering the second and the third one etc. We denote those divisors by $S^{(n)}_1$, $S^{(n)}_2$,\ldots, $S^{(n)}_{n-1}$. Thus, $\mathbf{S}=S^{(n)}_{i_1}\wedge S^{(n)}_{i_2}\wedge\ldots\wedge S^{(n)}_{i_r}$. For instance, for $n=1$ the only monomial allowed is $x\otimes 1$, for $n=2$ there are two monomials, $x^2\otimes 1$ and $x^2\otimes S^{(2)}_1$, for $n=3$~--- four monomials $x^3\otimes1$, $x^3\otimes S^{(3)}_1$, $x^3\otimes S^{(3)}_2$, $x^3\otimes S^{(3)}_1\wedge S^{(3)}_2$. Products of those monomials are computed in a straightforward way, keeping in mind that the $S$-symbols control the location of corresponding relation in the underlying $x$-monomial. For instance,
\begin{gather*}
(x\otimes1)(x\otimes1)=x^2\otimes1,\\
(x^2\otimes S^{(2)}_1)(x\otimes1)=x^3\otimes S^{(3)}_1,\\
(x\otimes1)(x^2\otimes S^{(2)}_1)=x^3\otimes S^{(3)}_2,\\
(x^2\otimes S^{(2)}_1)(x^2\otimes S^{(2)}_1)=x^4\otimes S^{(4)}_1\wedge S^{(4)}_3
\end{gather*}
and so on.
\end{example}

\begin{theorem}\label{incl-excl}
The dg-algebra $(A_G,d)$ is a free resolution of the corresponding algebra with monomial relations $\k\langle x_1,\ldots,x_n\rangle/(g_1,\ldots,g_m)$.
\end{theorem}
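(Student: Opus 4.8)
The plan is to prove the two things implicit in the phrase ``free resolution'': that $(A_G,d)$ is quasi-free (free as a graded algebra), and that the augmentation to $R$ induced on $H_0$ is a quasi-isomorphism, i.e.\ $H_0(A_G)=R$ and $H_q(A_G)=0$ for $q>0$. The key structural observation driving everything is that the differential $d$ leaves the underlying monomial $x_I$ untouched, so $(A_G,d)$ splits as a direct sum of subcomplexes indexed by words $I\in[n]^p$ (over all $p$). First I would fix a word $I$ and let $V_I$ be the vector space spanned by the symbols $S$ attached to those occurrences inside $x_I$ that coincide with one of the relations $g_1,\dots,g_m$. By the definition of $A_G$, the part of the complex sitting over $x_I$ is precisely $x_I\otimes\bigwedge^{\bullet}V_I$, and the stated formula for $d$ is nothing but the contraction $\iota_\xi$ by the covector $\xi=\sum_S S^{*}$ sending every basis occurrence to $1$. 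Thus each summand is a Koszul-type complex.

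The second step is the homology computation, which is immediate for such a complex. If $x_I$ contains at least one occurrence of a relation, then $V_I\neq 0$; choosing any such occurrence $S_0$ and setting $h(\omega)=S_0\wedge\omega$ gives $\iota_\xi h+h\iota_\xi=\xi(S_0)\,\id=\id$, so this summand is acyclic. If $x_I$ contains no occurrence of any relation, so that $x_I$ is a reduced (normal) monomial, then $\bigwedge^{\bullet}V_I=\k$ is concentrated in homological degree $0$. Summing over all $I$ yields $H_q(A_G)=0$ for $q>0$ and $H_0(A_G)=\bigoplus_{x_I\text{ reduced}}\k\,x_I$.

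It then remains to identify $H_0(A_G)$ with $R$ as an algebra. The degree-zero part of $A_G$ (no $S$-symbols) is the free algebra $\k\langle x_1,\dots,x_n\rangle$, and $d$ sends the degree-one generator $x_I\otimes S$ to $x_I$; hence the image of $d$ in degree zero is exactly the span of all monomials divisible by some $g_j$, which is the two-sided monomial ideal $(g_1,\dots,g_m)$. Therefore $H_0(A_G)=\k\langle x_1,\dots,x_n\rangle/(g_1,\dots,g_m)=R$, and the reduced monomials form the expected basis (this is where the hypothesis that no $g_i$ divides another ensures we work with a genuinely minimal set of monomial relations). To justify the word ``free'' I would finally check that $A_G$ is free as a graded algebra on its indecomposable (``connected'') monomials, namely those not expressible as a nontrivial product, using the unique factorisation of an arbitrary monomial at its clean cut points.

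I expect the substantive difficulty to be organisational rather than conceptual: the acyclicity drops straight out of the contracting homotopy above, so the homological heart of the statement is short. The care is needed elsewhere, in handling the distinction (stressed in the text) between \emph{occurrences} and \emph{monomials}, and the Koszul signs produced when the product reindexes wedge factors, so that both the direct-sum decomposition and the freeness claim are made precise. The freeness argument itself, via unique factorisation into connected components, is the most laborious part, though it is routine combinatorics once the clean-cut bookkeeping is set up.
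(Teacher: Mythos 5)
Your proposal is correct and follows essentially the same route as the paper: both split $(A_G,d)$ into subcomplexes indexed by the underlying monomial $x_I$, observe that a reduced monomial contributes $\k$ in degree $0$ while a non-reduced one gives an acyclic summand, and establish freeness via the monomials with indecomposable (clean-cut-free) coverings. The only cosmetic difference is that where the paper identifies each nontrivial summand with the inclusion--exclusion complex of a finite nonempty set, you make its acyclicity explicit via the contracting homotopy $h(\omega)=S_0\wedge\omega$ --- which is exactly the homotopy the paper itself uses later (the operators $\imath_p(\cdot)=S_p\wedge\cdot$).
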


\begin{proof}
Let us call a collection of divisors $S_1,\ldots, S_q$ of~$x_I$ \emph{indecomposable}, if each product $x_{i_k}x_{i_{k+1}}$ is contained in at least one of them. Then it is easy to see that $A$ is freely generated by elements $x_k\otimes 1$ and $x_I\otimes S_1\wedge\ldots\wedge S_q$ where $S_1,\ldots,S_q$ is an indecomposable collection of divisors of~$x_I$. Similarly, $A_G$ is freely generated by its basis elements $x_k\otimes 1$ and all elements $x_I\otimes S_1\wedge\ldots\wedge S_q$ where $S_1,\ldots,S_q$ is an indecomposable collection of divisors, each of which is a relation of~$R$.

Let us prove that $A_G$ provides a resolution for $R$. Since the differential $d$ only omits wedge factors but does not change the monomial, the chain complex $A_G$ is isomorphic to the direct sum of chain complexes $A_G^I$ spanned by the elements for which the first tensor factor is the given monomial $x_I\in\k\langle x_1,\ldots,x_n\rangle$. If $x_I$ is not divisible by any relation, the complex $A_G^I$ is concentrated in degree~$0$ and is spanned by~$x_I\otimes 1$. Thus, to prove the theorem, we should show that $A_G^I$ is acyclic whenever $x_I$ is divisible by some relation $g_i$.

Assume that there are exactly $k$ divisors of~$x_I$ which are relations of~$R$. We immediately see that the complex~$A_G^I$ is isomorphic to the inclusion--exclusion complex for the set $[k]$ 
\begin{equation}\label{InclExcl}
0\leftarrow \varnothing \leftarrow \bigoplus_{i=1}^{k} \{i\} \leftarrow
\bigoplus_{1\leq i<j\leq k} \{i,j\}\leftarrow \ldots
\leftarrow \{1,\ldots,k\}\leftarrow 0 
\end{equation}
(with the usual differential omitting elements). The latter one is acyclic whenever $k>0$, which completes the proof. 
\end{proof}

\begin{remark}
A similar construction works for commutative algebras as well, producing the corresponding homology groups. We shall not discuss it in detail here; the main idea is that one can make the symmetric groups act on the free algebras $A$ and $A_G$ from this section in such a way that the subalgebra of invariants is a free (super)commutative dg-algebra whose cohomology is the given monomial commutative algebra (acyclicity of the corresponding resolution can be derived from the acyclicity in the associative case, as subcomplexes of invariants of symmetric groups acting on the acyclic complexes have to be acyclic by the Maschke's theorem). All further constructions of the paper apply as well; we do not discuss any details or consequences here. It would be interesting to compare thus constructed resolutions with other known resolutions for monomial commutative algebras~\cite{Berglund,AlgMorse}.
\end{remark}

\subsection{Right module resolution for monomial algebras}\label{right_mod_alg}

Results of the previous section compute the bar homology of~$R$, since it coincides with the homology of the differential induced on the space of generators of a free resolution. This homology also coincides with the homology $\Tor^R_\ldot(\k,\k)$ of $R$, which is the homology of the complex of generators of a free $R$-module resolution of~$\k$. Let us explain how to materialize this statement via a free module resolution. We denote by $V_0$ the linear span of all $x_k\otimes 1$, and by $V_q, q>0$, the linear span of all elements $x_I\otimes S_1\wedge\ldots\wedge S_q$ as above. We shall construct a free resolution of the form
\begin{equation}\label{MonomRes}
\ldots\to V_q\otimes R\to V_{q-1}\otimes R\to\ldots\to V_1\otimes R\to V_0\otimes R\to R\to\k\to0. 
\end{equation}
It is enough to define boundary maps on the free module generators $V_q$, since boundary maps are morphisms of $R$-modules. First of all, we let $d_0\colon V_0\otimes R\to R$ be defined as $d_0(x_k\otimes 1)=x_k$. Assume that $q>0$ and let  $x_I\otimes S_1\wedge\ldots\wedge S_q\in V_q$. In the free algebra $A_G$, the differential~$d$ maps this element to a sum of elements corresponding to all possible omissions of $S_j$. If after the omission of~$S_j$ we still have an indecomposable covering, this summand survives in the differential. Otherwise, if after the omission of~$S_j$ the resulting covering is decomposable, and there exists a decomposition $x_I=x_Jx_K$ so that $S_1, \ldots, \hat{S_j}, \ldots,S_q$ form an indecomposable covering of $x_J$, then the corresponding summand of the differential becomes $(-1)^{j-1}(x_J\otimes S_1\wedge\ldots\wedge\hat{S_j}\wedge\ldots\wedge S_q)\otimes x_K\in V_{q-1}\otimes R$.

The following proposition is quite easy to prove, and we omit the details.
\begin{proposition}
The construction above provides a resolution of the trivial module by free~$R$-modules.
\end{proposition}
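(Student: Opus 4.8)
The proposition claims that the complex \eqref{MonomRes} is a free resolution of the trivial module $\k$ over $R$. We already have (by Theorem~\ref{incl-excl}) a free dg-algebra resolution $(A_G, d) \to R$. The module resolution is, morally, obtained by "reading off" the bar differential in a way that tracks the rightmost decomposable piece as a module coefficient. So the natural plan is to derive the module resolution directly from the algebra resolution rather than re-proving acyclicity from scratch.

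The plan is as follows. First I would make precise the relationship between the complex \eqref{MonomRes} and the dg-algebra $A_G$. The spaces $V_q$ are exactly the free generators of $A_G$ identified in the proof of Theorem~\ref{incl-excl}: the $x_I \otimes S_1 \wedge \ldots \wedge S_q$ with $S_1,\ldots,S_q$ an indecomposable covering by relations. Since $A_G$ is free as an algebra on $\bigoplus_q V_q$, as a graded vector space it is isomorphic to the free associative algebra on these generators; tensoring down, I expect to realize $V_q \otimes R$ as a suitable sub/quotient object recording, for each generator word, its unique factorization into a maximal indecomposable left part times a remaining right part. Concretely, any basis element $x_I \otimes S_1 \wedge \ldots \wedge S_q$ of $A_G$ factors uniquely as a product of an element of $V_*$ (the indecomposable prefix) and a monomial $x_K$ of $R$; this bijection is what the differential formula in the proposition encodes, and it should identify the module complex with the "last tensor factor" quotient of $(A_G,d)$.

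Second, with that identification in hand, I would prove acyclicity by the same monomial-by-monomial decomposition used for Theorem~\ref{incl-excl}. The module complex \eqref{MonomRes} splits, after forgetting the $R$-action and using the $\k$-basis of $R$ given by monomials not divisible by any relation, into a direct sum of complexes indexed by the underlying monomial $x_I$; each such piece is again an inclusion-exclusion complex on the set of relation-divisors lying in the maximal indecomposable prefix determined by $x_I$. By the acyclicity of \eqref{InclExcl} for $k > 0$, each piece with a nontrivial prefix is acyclic, and the pieces with empty prefix assemble exactly into the augmentation onto $\k$. This shows the homology is concentrated in degree $-1$ and equals $\k$, i.e. \eqref{MonomRes} is a resolution. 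Freeness of each $V_q \otimes R$ over $R$ is immediate by construction.

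The main obstacle, and the reason this is phrased as a proposition rather than a corollary, is the bookkeeping in the differential: unlike in $(A_G,d)$, where $d$ never touches the underlying monomial, here omitting an $S_j$ can render the covering decomposable, and one must verify that the rule "split off the maximal indecomposable prefix and push the rest into $R$" is well defined, sign-consistent, and squares to zero. I would check $d^2 = 0$ by comparing with $d^2 = 0$ in $A_G$ under the prefix/suffix factorization, being careful that the two cases (the covering stays indecomposable versus it splits) interact correctly with the Koszul signs $(-1)^{j-1}$. Once this compatibility is established the acyclicity is essentially inherited from Theorem~\ref{incl-excl}, which is why the authors are comfortable omitting the details.
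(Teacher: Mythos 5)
The central step of your plan---step 2---rests on a false identification, and this is exactly where the content of the proposition lies. After splitting by the total underlying monomial $x_L$, the graded piece of \eqref{MonomRes} is \emph{not} an inclusion-exclusion complex on any set of divisors. A basis element of the piece is a pair (generator $x_I\otimes S_1\wedge\ldots\wedge S_q$, monomial $x_K$ with $x_Ix_K=x_L$) subject to two validity constraints: the divisors must indecomposably cover an honest \emph{prefix} $x_I$ of $x_L$, and the suffix $x_K$ must not be divisible by any relation (otherwise it is zero in $R$). These constraints destroy the simplex structure. Concretely, take $R=\k\langle x\rangle/(x^2)$ and $x_L=x^3$, with its two relation-divisors $S_1,S_2$. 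The piece of \eqref{MonomRes} over $x_L$ has exactly two basis elements, $(x^2\otimes S_1)\otimes \bar{x}$ in degree $1$ and $(x^3\otimes S_1\wedge S_2)\otimes 1$ in degree $2$ (the candidates $x\otimes\overline{x^2}\in V_0\otimes R$ and $\overline{x^3}\in R$ vanish, and $\{S_2\}$ alone covers no prefix). Its dimensions $(0,1,1)$ are never of the form $\binom{k}{q}$, and its differential sends the top element to a single term, not a sum over all omissions; so the acyclicity of \eqref{InclExcl} cannot be invoked as you propose. (Your step 1 has a related imprecision: a basis element of $A_G$ factors as a product of \emph{several} generators, not as one generator times a monomial of $R$; the suffix can contain wedge factors, or be a monomial that dies in $R$.)

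The argument can be repaired, but it needs an extra idea. The piece over a monomial $x_L$ divisible by some relation is the \emph{quotient} of the full inclusion-exclusion complex $\Lambda$ on all relation-divisors of $x_L$ (which is the piece of $A_G$ over $x_L$, acyclic by Theorem~\ref{incl-excl}) by the span $\Lambda^{\mathrm{inv}}$ of the ``invalid'' subsets, i.e.\ those not indecomposably covering a prefix with relation-free complement. Two checks are needed. First, validity is preserved under adjoining a divisor (a new divisor either sits inside the covered prefix or straddles its right end, since nothing can sit in the relation-free suffix); hence invalid subsets are closed under passing to subsets, $\Lambda^{\mathrm{inv}}$ is a subcomplex, and incidentally $d^2=0$ on the quotient---this settles your step 3 as well. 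Second, $\Lambda^{\mathrm{inv}}$ is zero or acyclic: since no relation divides another, divisors of $x_L$ are ordered simultaneously by starting and ending position, and one checks that adjoining the rightmost divisor to an invalid subset keeps it invalid, so $\Lambda^{\mathrm{inv}}$ is the augmented chain complex of a cone. The long exact sequence of $0\to\Lambda^{\mathrm{inv}}\to\Lambda\to\Lambda/\Lambda^{\mathrm{inv}}\to0$ then gives acyclicity of the piece, and the pieces over monomials not divisible by any relation assemble into the augmentation, as you say. (The paper itself omits the proof, indicating only that the resolution can be seen as a twisting-cochain-type consequence of Theorem~\ref{incl-excl}; whichever route one takes, the validity analysis above, absent from your proposal, is the step that must be supplied.)
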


Readers familiar with the machinery of twisting cochains~\cite{twisting} may see our construction of the free right module resolution from the free dg-algebra resolution as a variation of the twisting cochain construction. More precisely, the differential of a generator in our free algebra resolution is a sum of products of generators; this provides the space of generators with a structure of an $\infty$-coalgebra, and the twisting cochain method applies. See~\cite{Lefevre-Hasegawa,Proute} for details in the case  of algebras, and \cite{DCV} for details in the operad case. 

\subsection{Homology classes for monomial algebras}\label{ass_minimal}

The inclusion-exclusion resolutions constructed above are in general not minimal. In this section, we give a description of generators of a minimal resolution for any monomial algebra. Our construction is a natural refinement of the inclusion-exclusion construction; we also identify it with the construction of Anick~\cite{Anick}, thus explaining one possible way to invent his results.

Let $R=\k\langle x_1,\ldots,x_n\rangle/(g_1,\ldots,g_m)$ be an algebra with $n$ generators and $m$ monomial relations. For the generators of a minimal resolution for the trivial module, one can take homology classes for the differential induced on the space of generators $(A_G)_{ab}=(A_G)_+/(A_G)_+^2$ of the algebra~$A_G$. We shall now give one possible set of representatives for these homology classes.  
 
Let us restrict ourselves to one of the chain complexes $(A_G^I)_{ab}$ corresponding to a particular monomial~$x_I$. We fix some ordering of the set of all relation divisors of~$f$; let those relations be $S_1<S_2<\ldots<S_m$. Then on the acyclic complex $A_G^I$ we have $m$ anticommuting differentials $\partial_1,\ldots,\partial_m$ where $\partial_p=\frac{\partial \phantom{S_p}}{\partial S_p}$, and the corresponding contracting homotopies $\imath_p(\cdot)=S_p\wedge\cdot$. Let us denote $\partial_{(p)}=\partial_1+\ldots+\partial_p$, so that the differential $d$ is nothing but~$\partial_{(m)}$. 

Our key observation is that for each~$p\le m$, 
 $$
H((A_G)_{ab}^I,\partial_{(p)})=H(\ldots(H((A_G)_{ab}^I,\partial_1),\ldots),\partial_p),
 $$
in other words, the homology of the total complex coincides with the iterated homology. This can be easily proved by induction on~$p$, since on each step the spectral sequence of the corresponding bicomplex degenerates at its page~$E_1$ (since computing $\partial_{(p)}^{-1}$ commutes with $\partial_{p+1}$). This observation results in the following statement, that can also be proved by induction on~$p$.

\begin{proposition}
For each $p\le m$, the homology $H((A_G)_{ab}^I,\partial_{(p)})$ has for representatives of all classes all monomials $M=x_I\otimes S_{i_1}\wedge\ldots\wedge S_{i_q}$ that satisfy the following properties:
\begin{itemize}
 \item[(i)] for all $j\le p$, the monomial $\partial_j(M)$ is either decomposable or equal to~$0$.
 \item[(ii)] for each $q'\le p$ such that $\partial_{q'}(M)=0$, there exists $q<q'$ for which $\imath_{q'}\partial_q (M)\ne0$ in $(A_G^I)_{ab}$ (i.e., this monomial is indecomposable).  
\end{itemize}
\end{proposition}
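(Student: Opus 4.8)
The plan is to prove the statement by induction on $p$, exactly as the text suggests, using the observation that the total homology equals the iterated homology. The base case $p=0$ is vacuous (condition (i) is empty and condition (ii) imposes no constraint), so every monomial $M$ is a representative and indeed spans $(A_G)_{ab}^I$ itself. For the inductive step, I would assume that the monomials satisfying (i) and (ii) for $j,q'\le p-1$ give representatives for $H((A_G)_{ab}^I,\partial_{(p-1)})$, and then compute the next homology $H(H((A_G)_{ab}^I,\partial_{(p-1)}),\partial_p)$, which by the key observation equals $H((A_G)_{ab}^I,\partial_{(p)})$.

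The heart of the argument is the analysis of a single anticommuting differential $\partial_p$ acting on the space already reduced to $\partial_{(p-1)}$-representatives. The differential $\partial_p=\frac{\partial}{\partial S_p}$ deletes the wedge factor $S_p$ when present, and the contracting homotopy $\imath_p$ wedges $S_p$ on. On a space of monomials, $\partial_p$ together with $\imath_p$ splits the complex into a direct sum of acyclic two-term pieces $M \rightleftarrows S_p\wedge M$ (those monomials $M$ for which $\partial_p\imath_p + \imath_p\partial_p$ acts invertibly, i.e. $\partial_p(M)\ne 0$ or $\imath_p(M)\ne 0$ survives as nonzero) plus a part killed by both $\partial_p$ and $\imath_p$. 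I would translate this into the two surviving conditions: a monomial $M$ with $S_p$ as a wedge factor survives to homology only if deleting it is forbidden, which after passing to $(A_G)_{ab}^I$ is the requirement that $\partial_p(M)$ be decomposable or zero — giving condition (i) for $j=p$; and a monomial $M$ \emph{not} containing $S_p$ survives only if it is not in the image of $\imath_p$ restricted to the homology, i.e. is not killed by the homotopy, which is precisely the indecomposability clause recorded in condition (ii) for $q'=p$. The Koszul sign bookkeeping from the differential formula~(1) must be tracked but does not affect which monomials survive.

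The main obstacle, and the point requiring care, is the interaction between passing to the indecomposables quotient $(A_G)_{ab}^I = (A_G)_+^I/((A_G)_+^I)^2$ and taking homology of $\partial_p$. Conditions (i) and (ii) are phrased in terms of \emph{decomposability} of $\partial_j(M)$ and \emph{nonvanishing in the abelianization}, so I must check that the subquotient computed by $\partial_p$ on the abelianized complex matches the honest homology, rather than picking up spurious classes from products. Concretely, I would verify that $\partial_p$ descends to $(A_G)_{ab}^I$ (it does, being a derivation-like omission map compatible with the product structure of $A$), and that the spectral-sequence degeneration at $E_1$ invoked in the text — the commutation of $\partial_{(p)}^{-1}$ with $\partial_{p+1}$ — guarantees no differentials survive to later pages, so the iterated homology computation is legitimate. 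Once this compatibility is established, conditions (i) and (ii) follow by unwinding the two-term acyclic decomposition above, and the induction closes.
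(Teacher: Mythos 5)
Your skeleton --- induction on $p$ combined with the identity between total and iterated homology --- is exactly the paper's, your base case is fine, and your reading of condition (i) (the case $S_p\in M$) is correct. The genuine gap is the central claim of your inductive step: that on the space of $\partial_{(p-1)}$-representatives the operators $\partial_p$ and $\imath_p$ split everything into acyclic two-term pieces $M\rightleftarrows S_p\wedge M$ plus a part killed by both. That splitting is valid for $\partial_p$ acting on all of $(A_G)_{ab}^I$, but it does not descend to $H((A_G)_{ab}^I,\partial_{(p-1)})$, because $\imath_p$ is not compatible with the abelianized differentials: the identity $\partial_j(S_p\wedge M)=\pm S_p\wedge\partial_j(M)$, true in $A_G^I$, fails in the quotient $(A_G)_{ab}^I$, since wedging $S_p$ onto a decomposable collection of divisors can make it indecomposable. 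A concrete instance: $R=\k\langle x\rangle/(x^3)$, $x_I=x^7$, divisors $S_1<\dots<S_5$ ordered by starting position, $p=3$. The monomial $M=x^7\otimes S_1\wedge S_2\wedge S_4\wedge S_5$ spans the (one-dimensional) homology $H((A_G)_{ab}^I,\partial_{(2)})$ and does not contain $S_3$, but its proposed partner $S_3\wedge M$ is not even a $\partial_{(2)}$-cycle in $(A_G)_{ab}^I$: its $\partial_2$-term $S_1\wedge S_3\wedge S_4\wedge S_5$ is an indecomposable covering, hence nonzero. So the two-term piece $[M]\rightleftarrows[S_3\wedge M]$ simply does not exist in this homology, and $M$ survives to $H((A_G)_{ab}^I,\partial_{(3)})$ for precisely that reason.

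This failure is not a technicality to be absorbed by the degeneration at $E_1$ --- it is the entire content of condition (ii). When (ii) fails at $q'=p$, every monomial $\imath_p\partial_q(M)$ with $q<p$ is decomposable, so $\partial_{(p)}(S_p\wedge M)=\pm M$ exactly and $M$ is a boundary; when (ii) holds, $\partial_{(p)}(S_p\wedge M)$ acquires extra indecomposable terms, and you still owe an argument (say, a triangularity or filtration analysis of the induced map $d_1\colon H(C_1,\partial_{(p-1)})\to H(C_0,\partial_{(p-1)})$, where $C_1$ and $C_0$ are spanned by monomials with and without $S_p$) showing that the surviving classes are linearly independent and span. Your sketch never derives the clause ``there exists $q<q'$ with $\imath_{q'}\partial_q(M)\ne0$''; it is asserted to follow ``by unwinding''. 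Moreover, the obstacle you do flag --- whether $\partial_p$ descends to the abelianization, to be settled by the $E_1$-degeneration --- is not the real one: $\partial_p$ descends trivially, and the degeneration only yields the iterated-homology identity you already invoked, not the identification of representatives. Closing the induction requires computing the matrix of $d_1$ in the basis of $\partial_{(p-1)}$-representatives, and that computation is exactly where conditions (i) and (ii) acquire their precise form.
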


Setting $p=m$ in this result, we get the following description of the homology of~$(A_G)_{ab}^I$, that is the generators of the minimal resolution:
\begin{theorem}\label{min-resol}
The homology $H((A_G)_{ab}^I,d)$ has for representatives of all classes all monomials $M=f\otimes S_{i_1}\wedge\ldots\wedge S_{i_q}$ that satisfy the following properties:
\begin{itemize}
 \item[(i)] for all $j$, the monomial $\partial_j(M)$ is either decomposable or equal to~$0$.
 \item[(ii)] for each $q'$ such that $\partial_{q'}(M)=0$, there exists $q<q'$ for which $\imath_{q'}\partial_q (M)\ne0$ in $(A_G)_{ab}^I$ (i.e., this monomial is indecomposable). 
\end{itemize} 
\end{theorem}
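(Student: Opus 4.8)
The plan is to deduce this theorem as the special case $p=m$ of the Proposition preceding it, since the full differential $d$ on $(A_G)_{ab}^I$ is exactly $\partial_{(m)}=\partial_1+\ldots+\partial_m$. Thus it suffices to establish that Proposition, and I would do this by induction on $p$, as the authors suggest. The engine of the induction is the key observation quoted above, that $H((A_G)_{ab}^I,\partial_{(p)})$ agrees with the iterated homology obtained by killing $\partial_1,\ldots,\partial_p$ one at a time. Granting this, the inductive step reduces to a single-differential computation: starting from a chosen set of representatives for $H((A_G)_{ab}^I,\partial_{(p-1)})$ (the monomials satisfying (i) and (ii) for all indices up to $p-1$), I would compute the homology of the induced operator $\partial_p$ on this space and show that the surviving classes are represented precisely by those monomials that in addition satisfy (i) and (ii) at index $p$.

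Before running the induction I would first justify the key observation itself, i.e. the degeneration at $E_1$ of the spectral sequence associated to the splitting $\partial_{(p)}=\partial_{(p-1)}+\partial_p$. Here I would use that each $\partial_j$ is the operator $\partial/\partial S_j$ deleting the wedge factor $S_j$, that these operators anticommute, and that each carries the homotopy $\imath_j(\cdot)=S_j\wedge\cdot$. On the full (non-abelianized) acyclic complex $A_G^I$ one has the clean relation $\partial_j\imath_j+\imath_j\partial_j=\mathrm{id}$ in the $S_j$-direction, which forces the collapse; the real content of the observation is that this collapse descends to the space of indecomposables $(A_G)_{ab}^I$, which I would verify by checking that a splitting of $\partial_{(p)}$ can be chosen so as to commute with $\partial_{p+1}$.

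For the single-step computation I would analyze $\partial_p$ by separating monomials according to whether the wedge factor $S_p$ occurs. Condition (i) at index $p$ says precisely that $\partial_p(M)$ is decomposable or zero, i.e. that $M$ is a $\partial_p$-cycle in $(A_G)_{ab}^I$, so it cuts out the cycle space. Condition (ii) at index $p$ is then the combinatorial description of which of these cycles survive to homology rather than being boundaries: when $\partial_{q'}(M)=0$ there is a candidate primitive $S_{q'}\wedge M$, and the requirement that $\imath_{q'}\partial_q(M)\ne0$ for some $q<q'$ is what records whether, at the stage where $\partial_{q'}$ is killed, this primitive is actually available as a class rather than already being annihilated by the lower differentials $\partial_1,\ldots,\partial_{q'-1}$. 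I would check that the monomials satisfying (i) and (ii) through index $p$ both span the iterated homology and remain linearly independent in it, completing the inductive step; setting $p=m$ then yields the theorem.

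The main obstacle, and where I expect the real work to sit, is the passage to the space of indecomposables. On the full complex the homotopies $\imath_j$ give a textbook acyclicity argument, but on $(A_G)_{ab}^I$ the identity $\partial_j\imath_j+\imath_j\partial_j=\mathrm{id}$ fails, because $\imath_j$ can produce decomposable elements that vanish in the quotient. Condition (ii) is exactly the bookkeeping device that repairs this failure, and the delicate point is to show that it characterizes non-boundaries correctly at every stage of the iterated homology — equivalently, to control how indecomposability of a covering changes as wedge factors are deleted and reinserted. Once this combinatorial lemma about indecomposable coverings is in place, both the spectral sequence degeneration and the identification of representatives follow, and matching the resulting monomials with Anick's chains becomes a matter of unwinding definitions.
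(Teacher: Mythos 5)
Your proposal follows the paper's own route exactly: the theorem is obtained as the $p=m$ case of the preceding Proposition, which is proved by induction on $p$ using the key observation that $H((A_G)_{ab}^I,\partial_{(p)})$ equals the iterated homology, justified by degeneration at $E_1$ of the spectral sequence for the splitting $\partial_{(p)}=\partial_{(p-1)}+\partial_p$ (equivalently, that the inversion/homotopy for $\partial_{(p-1)}$ can be chosen to commute with $\partial_p$). If anything, your write-up is more detailed than the paper's, which leaves both the degeneration claim and the single-step identification of representatives as brief remarks; your flagging of the passage to indecomposables as the delicate point is exactly where the paper's omitted work sits.
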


There is a natural way to order all divisors of a given monomial~$x_I$, listing them according to their starting point. It turns out that for this ordering there is another elegant description for the representatives of homology classes. Let us recall the definition of Anick chains~\cite{Anick,Ufn}. Every chain is a monomial of the free algebra $\k\langle x_1,\ldots,x_n\rangle$. For $q\ge0$, $q$-chains and their tails are defined inductively as follows:
\begin{itemize}
\item[-] each generator $x_i$ is a $0$-chain; it coincides with its tail;
\item[-] each $q$-chain is a monomial $m$ equal to a product $nst$ where $t$ is the tail of $m$, and $ns$ is a $(q-1)$-chain whose tail is $s$;
\item[-] in the above decomposition, the product $st$ has exactly one divisor which is a relation of~$R$; this divisor is a right divisor of~$st$.
\end{itemize}
In other words, a $q$-chain is a monomial formed by linking one after another $q$ relations so that only neighbouring relations are linked, the first $(q-1)$ of them form a $(q-1)$-chain, and no proper left divisor is a $q$-chain. In our notation above, such a monomial $m$ corresponds to the generator $m\otimes S_1\wedge\ldots\wedge S_q$ where $S_1$, \ldots, $S_q$ are the relations we linked.

\begin{proposition}
For the above ordering of divisors, the representatives for homology classes from Theorem~\ref{min-resol} are exactly Anick chains.
\end{proposition}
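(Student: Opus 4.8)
The plan is to translate both descriptions into the combinatorial language of intervals and to verify that the resulting conditions coincide. First I would record the geometry of the relation divisors of a fixed monomial $x_I$ of length $\ell$: each is an interval $[a,b]$ of consecutive positions in $\{1,\dots,\ell\}$, and since no relation of $G$ divides another, at most one relation can begin at a given position, and a divisor starting earlier must also end earlier. Ordering the relation divisors by starting point, as in the statement, therefore produces a strictly increasing ``staircase'' $S_1=[a_1,b_1],\dots,S_m=[a_m,b_m]$ with $a_1<\dots<a_m$ and $b_1<\dots<b_m$. In this language a basis element $x_I\otimes S_{j_1}\wedge\dots\wedge S_{j_q}$ of $(A_G^I)_{ab}$ is an indecomposable covering, i.e.\ a subfamily whose intervals jointly contain every adjacent pair $\{k,k+1\}$; this forces $a_{j_1}=1$, $b_{j_q}=\ell$, and consecutive chosen intervals to overlap, $a_{j_{t+1}}\le b_{j_t}$.

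Next I would unwind condition~(i). As $\partial_j$ deletes the factor $S_j$, and the result is nonzero in the abelianisation exactly when the surviving family is still a covering, condition~(i) asserts that deleting any single chosen interval destroys the covering, i.e.\ that the family is a \emph{minimal} covering. On the staircase this is precisely the statement that non-consecutive chosen intervals are disjoint, $a_{j_{t+1}}>b_{j_{t-1}}$ --- that only neighbouring relations are linked. This already reproduces the connectivity part of the definition of an Anick chain, so the task reduces to matching condition~(ii) with the remaining, ``tightness'' clause.

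The analysis of condition~(ii) is the heart of the argument, and the step I expect to be the main obstacle. Fix a non-chosen relation $S_{r'}=[a_{r'},b_{r'}]$ and let $t$ be determined by $b_{j_{t-1}}<b_{r'}<b_{j_t}$ (with the convention $b_{j_0}=1$); the staircase then gives $a_{j_{t-1}}<a_{r'}<a_{j_t}$, so the chosen intervals eligible to be removed in $\imath_{r'}\partial_r$, namely those of index $<r'$, are exactly $S_{j_1},\dots,S_{j_{t-1}}$. Because $a_{j_t}\le b_{j_{t-1}}<b_{r'}$, the interval $S_{r'}$ always overlaps $S_{j_t}$, so whether $S_{r'}$ can be swapped in for an earlier chosen interval is governed by its left end: if $a_{r'}\le b_{j_{t-2}}$ then replacing $S_{j_{t-1}}$ by $S_{r'}$ still covers $x_I$, so $\imath_{r'}\partial_{j_{t-1}}(M)\ne0$ and (ii) holds for $S_{r'}$; whereas if $a_{r'}>b_{j_{t-2}}$, equivalently $S_{r'}\subseteq[\,b_{j_{t-2}}+1,\,b_{j_t}\,]$, then $S_{r'}$ reaches no chosen interval on its left, and --- here one must use minimality to check that removing any earlier $S_{j_{t'}}$ with $t'<t-1$ opens a gap that the localised $S_{r'}$ cannot repair --- no admissible swap exists and (ii) fails. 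The delicate point is precisely to rule out \emph{all} earlier swaps, not just the nearest one. Consequently, granting~(i), condition~(ii) holds if and only if no relation divisor is contained in any window $[\,b_{j_{t-2}}+1,\,b_{j_t}\,]$ apart from $S_{j_t}$ itself, the remaining chosen intervals being automatically excluded by the staircase.

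Finally I would match this window condition to Anick's inductive definition. Unwinding the recursion $m=nst$ shows that the clause ``$st$ has exactly one divisor which is a relation, and it is a right divisor of $st$'' says exactly that, writing the linked relations as $[a_{j_1},b_{j_1}],\dots,[a_{j_q},b_{j_q}]$, the region $st=[\,b_{j_{t-2}}+1,\,b_{j_t}\,]$ contains the single relation divisor $S_{j_t}$, which ends at $b_{j_t}$. Imposing this for every $t$, together with $a_{j_1}=1$, $b_{j_q}=\ell$ and the linking extracted from~(i), characterises the Anick $q$-chains as exactly the minimal coverings satisfying the window condition; comparing with the previous paragraph identifies them with the representatives of Theorem~\ref{min-resol} for the starting-point ordering. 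I would organise the bookkeeping as an induction on $q$, which both packages the recursion in Anick's definition and confirms that the relation-decomposition attached to a chain monomial is the one recorded by the wedge $S_{j_1}\wedge\dots\wedge S_{j_q}$.
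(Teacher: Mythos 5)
Your proof is correct and follows the same route as the paper's (one-sentence) argument: identifying condition~(i) with the requirement that only neighbouring relations are linked, and condition~(ii) with Anick's tightness clause (exactly one relation divisor in each segment $st$, occurring as a right divisor). Your staircase/window analysis is a rigorous expansion of exactly those two equivalences, which the paper simply asserts without detail.
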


\begin{proof}
Indeed, condition~(i) means that only neighbours are linked, and condition~(ii) means that no proper beginning of a $q$-chain forms a $q$-chain. 
\end{proof}

\subsection{Resolutions for general relations}\label{general}

In this section, we shall explain the machinery that transforms our resolution for a monomial replacement of the given algebra into a resolution for the original algebra. 

Let $\widetilde{R}=\k\langle x_1,\ldots,x_n\rangle/(\widetilde{G})$ be an algebra, and let $R=\k\langle x_1,\ldots,x_n\rangle/(G)$ be its monomial version, that is, $\widetilde{G}=\{\tilde{g}_1,\ldots,\tilde{g}_m\}$ is a Gr\"obner basis of relations, and $G=\{g_1,\ldots,g_m\}$ are the corresponding leading monomials. We have a free resolution $(A_G,d)$ for $R$, so that $H_\ldot(A_G,d)\simeq R$. Let $\pi$ (resp., $\widetilde{\pi}$) be the algebra homomorphism from $A_G$ to $R$ (resp., $\widetilde{R}$) that kills all generators of positive homological degree, and on elements of homological degree~$0$ is the canonical projection from $\k\langle x_1,\ldots,x_n\rangle$ to its quotient. Denote  by~$h$ the contracting homotopy for this resolution, so that $\left.(dh)\right|_{\ker d}= \Id-\pi$.

\begin{theorem}
There exists a ``deformed'' differential $D$ on~$A_G$ and a homotopy $H\colon\ker D\rightarrow A_G$ such that 
$H_\ldot(A_G,D)\simeq\widetilde{R}$, and $\left.(DH)\right|_{\ker D} = \Id-\widetilde{\pi}$.
\end{theorem}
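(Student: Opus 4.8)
The plan is to construct the deformed differential $D$ by a standard perturbation/deformation argument, treating the lower-order terms of the relations $\tilde g_i = g_i - r_i$ (where $r_i$ collects the monomials strictly smaller than the leading term $g_i$) as a perturbation of the monomial case. Concretely, I would write $D = d + D'$, where $D'$ is a correction term that raises the filtration by the leading-monomial (degree) filtration on $\k\langle x_1,\ldots,x_n\rangle$, and build $D'$ inductively so that $D$ is a genuine differential ($D^2 = 0$) and so that in associated graded it reduces to $d$. The key point is that the whole construction is filtered: each basis element $x_I \otimes S_1 \wedge \ldots \wedge S_q$ carries the weight $|x_I|$, and the monomial differential $d$ preserves this weight while the desired corrections strictly decrease it. This makes the perturbation \emph{locally finite}, so there are no convergence issues and the inductive definitions terminate.

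First I would set up the homological perturbation lemma in the appropriate form: from Theorem~\ref{incl-excl} we have the resolution $(A_G,d) \to R$ together with the projection $\pi$ and contracting homotopy $h$ satisfying $(dh)|_{\ker d} = \Id - \pi$. I would use $h$ to solve the recursion for $D'$ order by order in the filtration. On each graded piece, $D'$ is determined by requiring that the image of a generator under $D$, projected to $\widetilde R$ via $\widetilde\pi$, be consistent — that is, I would define $D$ on generators so that $\widetilde\pi D = 0$ and then extend to a derivation (or module map, in the right-module version of~\eqref{MonomRes}). The homotopy $h$ is precisely what lets me correct the failure of the naive guess $d$ to square to zero: the obstruction $d$-cocycle at each stage is killed by applying $h$, producing the next-order term of $D'$. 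The same perturbed data yield the deformed homotopy $H$, obtained as the geometric-series-style correction $H = h(\Id + (D'h))^{-1}$ restricted to $\ker D$, which makes sense because $D'h$ strictly decreases the filtration and is therefore nilpotent on each weight component.

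Having built $D$ and $H$, I would verify the two claimed properties. For $H_\ldot(A_G,D) \simeq \widetilde R$, I would run the spectral sequence of the leading-monomial filtration: its $E_0$ (or $E_1$) page is computed by the monomial differential $d$, and Theorem~\ref{incl-excl} tells us that page is $R$ concentrated in degree $0$; since the filtration is exhaustive and bounded below on each weight, the spectral sequence collapses and the homology of $(A_G,D)$ is a deformation of $R$ with the same underlying graded space, which one identifies with $\widetilde R$ because $D$ was engineered so that $\widetilde\pi$ descends to an isomorphism on $H_0$. The identity $(DH)|_{\ker D} = \Id - \widetilde\pi$ then follows by the same order-by-order bookkeeping that produced $H$, reducing in associated graded to $(dh)|_{\ker d} = \Id - \pi$.

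The main obstacle I anticipate is not any single estimate but the verification that the perturbed $D$ genuinely squares to zero while remaining a \emph{derivation} (so that it defines an algebra differential and descends to the indecomposables $(A_G)_{ab}$, which is what the minimal-resolution application needs). The naive perturbation lemma produces only a differential, not multiplicativity, so I would need to check that the correction terms $D'$ can be chosen compatibly with the product structure of~\eqref{MonomRes} — equivalently, that the $\infty$-coalgebra/twisting-cochain interpretation mentioned after the Proposition of Section~\ref{right_mod_alg} lets the deformation be carried out at the level of the coalgebra of generators. Ensuring this compatibility, and in particular that the choice of $h$ respects the decomposition into the subcomplexes $A_G^I$ only up to the strictly-lower-order corrections, is where the careful part of the argument lies; everything else is formal once the filtration is shown to be locally finite.
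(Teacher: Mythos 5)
Your skeleton --- filter $A_G$ by leading monomials, write $D=d+D'$ with $D'$ strictly decreasing the filtration, and build the correction order by order --- is indeed the paper's (Anick-style) strategy, but there is a genuine gap in the mechanism you propose: you want to kill the obstructions with the \emph{monomial} homotopy $h$, and that fails at the very first nontrivial step. The obstruction to defining $D$ on a homological-degree-$2$ generator (an overlap of two leading terms) is essentially $D_1d_2(x)$, and it lands in homological degree~$0$. There the $d$-cocycle condition is vacuous, and being killable by $h$ means being a $d$-boundary, i.e.\ lying in the \emph{monomial} ideal $(G)$; but the obstruction lies only in the ideal $(\widetilde{G})$, and filtration-level by filtration-level it is in general not in $(G)$ at all. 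Worse, even knowing it lies in the image of $D_1$ is not enough: the correction must have strictly smaller filtration than $d_2(x)$, and such a preimage need not exist. The tell-tale sign is that your argument never invokes the hypothesis that $\widetilde{G}$ is a Gr\"obner basis, so if it were correct it would apply to an arbitrary set of relations with leading terms $G$ --- which is false. Take $\widetilde{G}=\{x^2-y\}\subset\k\langle x,y\rangle$ with deg-lex, $x>y$ (not a Gr\"obner basis: the S-polynomial $xy-yx$ does not reduce to zero). The ideal $(\widetilde{G})$ contains $xy-yx$, whose leading monomial $xy$ is not in $(G)=(x^2)$, so $\mathrm{gr}\,\widetilde{R}\subsetneq R$; on the other hand your own spectral-sequence argument shows that \emph{any} filtered deformation $D$ of $d$ has $\mathrm{gr}\,H_0(A_G,D)\cong R$ and $H_{>0}=0$. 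Hence no filtered $D$ resolving $\widetilde{R}$ exists. Concretely, for the overlap generator $W=x^3\otimes S_1\wedge S_2$ one computes $D_1d_2(W)=x(x^2-y)-(x^2-y)x=yx-xy$: this lies in $(\widetilde{G})$ but its only $D_1$-preimages (such as $(x^2\otimes S)\cdot x-x\cdot(x^2\otimes S)$) have the same leading monomial $x^3$ as $d_2(W)$ itself, so no admissible lower-order correction can be found --- by $h$ or by anything else.

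This is exactly why the paper constructs $D$ and the \emph{deformed} homotopy $H$ simultaneously, by a double induction (outer on homological degree, inner on the well-ordered set of leading monomials): $D_{k+1}(x)=d_{k+1}(x)-H_{k-1}D_kd_{k+1}(x)$ and $H_k(u)=h_k(\hat{u})+H_k\bigl(u-D_{k+1}h_k(\hat{u})\bigr)$. Here $H_0$ is reduction to normal form modulo $\widetilde{G}$, and the identity $D_1H_0=\Id-\widetilde{\pi}$ --- which is what makes every obstruction killable by a correction of strictly smaller filtration --- is proved using precisely the Gr\"obner property (``the residue does not depend on a particular choice of reductions''). Your formula $H=h(\Id+D'h)^{-1}$ can only be written down \emph{after} $D$ exists, so it cannot play this role in the construction. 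Your final worry about multiplicativity, by contrast, is a non-issue: as in the paper, $D$ is defined only on the free generators of $A_G$ and extended as a derivation, so it is multiplicative by fiat; only the homotopy $H$ fails to respect products, and nothing in the statement requires it to.
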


\begin{proof}
We shall construct $D$ and $H$ simultaneously by induction. Let us introduce the following partial ordering of monomials in $A_G$: $f\otimes S_1\wedge \ldots\wedge S_q$ is, by definition, less than $f'\otimes S'_1\wedge \ldots\wedge S'_{q'}$ if the monomial~$f$ is less than $f'$ in the free algebra. This partial ordering suggests the following definition: for an element $u\in A_G$, its leading term $\hat{u}$ is the part of the expansion of~$u$ as a combination of basis elements where we keep only basis elements $f\otimes S_1\wedge \ldots\wedge S_q$ with maximal possible~$f$.

If $L$ is a homogeneous linear operator on $A_G$ of some fixed (homological) degree of homogeneity (like $D$, $H$, $d$, $h$), we denote by $L_k$ the operator $L$ acting on elements of homological degree~$k$. We shall define the operators $D$ and $H$ by induction: we define the pair $(D_{k+1},H_k)$ assuming that all previous pairs are defined. At each step, we shall also be proving that 
 $$
D(x)=d(\hat{x})+\text{lower terms}, \quad H(x)=h(\hat{x})+\text{lower terms},
 $$
where the words ``lower terms'' mean a linear combination of basis elements whose underlying monomial is smaller than the underlying monomial of~$\hat{x}$.
 
Basis of induction: $k=0$, so we have to define $D_1$ and $H_0$ (note that $D_0=0$ because there are no elements of negative homological degrees). In general, to define $D_l$, we should only consider the case when our element is a generator of~$A_G$, since in a dg-algebra the differential is defined by images of generators. For $l=1$, this means that we should consider the case where our generator corresponds to a leading monomial~$f=\lt(g)$ of some relation~$g$, and is of the form $f\otimes S$ where $S$ corresponds to the only divisor of $m$ which is a leading term, that is $f$ itself. We put 
 $$
D_1(f\otimes S)=\frac{1}{c_g}g,
 $$
where $c_g$ is the leading coefficient of~$g$. We see that  $$D_1(f\otimes S)=f+\text{lower terms},$$ as required. To define $H_0$, we use a yet another inductive argument, decreasing the monomials on which we want to define $H_0$. First of all, if a monomial~$f$ is not divisible by any of the leading terms of relations, we put~$H_0(f)=0$. Assume that $f$ is divisible by some leading terms of relations, and $S_1$, \ldots, $S_p$ are the corresponding divisors. Then on $A_G^f$ we can use $S_1\wedge\cdot$ as a homotopy, so $h_0(f)=f\otimes S_1$. We put 
 $$
H_0(f)= h_0(f) + H_0(f- D_1h_0(f)).
 $$
Here the leading term of $f-D_1h_0(f)$ is smaller than~$f$ (since we already know that the leading term of $D_1h_0(f)$ is $d_1h_0(f)=f$), so induction on the leading term applies. Note that by induction the leading term of $H_0(f)$ is $h_0(f)$. 

Suppose that $k>0$, that we know the pairs $(D_{l+1},H_l)$ for all $l<k$, and that in these degrees 
 $$
D(x)=d(\hat{x})+\text{lower terms}, \quad H(x)=h(\hat{x})+\text{lower terms}.
 $$
To define $D_{k+1}$, we should, as above, only consider the case of generators. In this case, we put
 $$
D_{k+1}(x)=d_{k+1}(x) - H_{k-1}D_kd_{k+1}(x).
 $$
The property $D_{k+1}(x)=d_{k+1}(\hat{x})+\text{lower terms}$ now easily follows by induction. To define $H_k$, we proceed in a way very similar to what we did for the induction basis. Assume that $u\in\ker D_k$, and that we know $H_k$ on all elements of $\ker D_k$ whose leading term is less than $\hat{u}$. Since $D_k(u)=d_k(\hat{u})+\text{lower terms}$, we see that $u\in\ker D_k$ implies $\hat{u}\in\ker d_k$. Then $h_k(\hat{u})$ is defined, and we put
 $$
H_k(u)= h_k(\hat{u}) + H_k(u- D_{k+1}h_k(\hat{u})).
 $$
Here $u-D_{k+1}h_k(\hat{u})\in\ker D_k$ and its leading term is smaller than~$\hat{u}$, so induction on the leading term applies (and it is easy to check that by induction $H_{k+1}(x)=h_{k+1}(\hat{x})+\text{lower terms}$). 

Let us check that the mappings $D$ and $H$ defined by these formulas satisfy, for each $k>0$, $D_kD_{k+1}=0$ and $\left.(D_{k+1}H_{k})\right|_{\ker D_{k}} = \Id-\widetilde{\pi}$. A computation checking that is somewhat similar to the way $D$ and $H$ were constructed. Let us prove both statements simultaneously by induction. If $k=0$, the first statement is obvious. Let us prove the second one and establish that $D_1H_0(f)=(\Id-\widetilde{\pi})(f)$ for each monomial $f$. Slightly rephrasing that, we shall prove that for each monomial $f$ we have $D_1H_0(f)=f-\overline{f}$ where $\overline{f}$ is the residue of $f$ modulo~$G$ \cite{DK}. We shall prove this statement by induction on $f$. If the monomial~$f$ is not divisible by any leading terms of relations, we have $H_0(f)=0=f-\overline{f}$. Let $f$ be divisible by leading terms $f_1$, \ldots, $f_p$, and let $S_1$, \ldots, $S_p$ be the corresponding divisors. We have $H_0(f)= h_0(f) + H_0(f- D_1h_0(f))$, so 
 $$
D_1H_0(f)= D_1h_0(f) + D_1H_0(f- D_1h_0(f)).
 $$
By induction, we may assume that
 $$
D_1H_0(f- D_1h_0(f))=f- D_1h_0(f)-\overline{(f- D_1h_0(f))}.
 $$
Also, 
 $$
D_1h_0(f)=D_1(f\otimes S_1)=\frac{1}{c_g}f',
 $$ 
 where $g$ is the relation with the leading monomial $f_1$, and $$\frac{1}{c_g}f'=\frac{1}{c_g}m_{f,f_1}(f)=f-r_g(f)$$ is the (normalized) result of substitution of~$g$ into~$f$ in the place described by~$S_1$.
Consequently, 
\begin{multline*}
D_1H_0(f)= f-r_g(f)+ \left((f- D_1h_0(f))-\overline{(f- D_1h_0(f))}\right)=\\=f-r_g(f)+(r_g(f)-\overline{r_g(f)})=f-\overline{r_g(f)}=f-\overline{f}, 
\end{multline*}
since the residue does not depend on a particular choice of reductions.

Assume that $k>0$, and that our statement is true for all $l<k$. We have 
 $$D_{k}D_{k+1}(x)=0$$ 
since
\begin{multline*}
D_{k}D_{k+1}(x)= D_k(d_{k+1}(x) - H_{k-1}D_kd_{k+1}(x))= \\=
D_kd_{k+1}(x) - D_kH_{k-1}D_kd_{k+1}(x)= D_kd_{k+1}(x) - D_kd_{k+1}(x)= 0,
\end{multline*}
because $D_kd_{k+1}k\in\ker D_{k-1}$, and so $D_kH_{k-1}(D_k(y))=D_k(y)$ by induction.
Also, for $u\in\ker D_k$ we have
 $$
D_{k+1}H_k(u)= D_{k+1}h_k(\hat{u}) + D_{k+1}H_k(u- D_{k+1}h_k(\hat{u})),
 $$
and by the induction on $\hat{u}$ we may assume that 
 $$
D_{k+1}H_k(u- D_{k+1}h_k(\hat{u}))=u- D_{k+1}h_k(\hat{u})
 $$
(on elements of positive homological degree, $\widetilde{\pi}=0$), so
 $$
D_{k+1}H_k(u)= D_{k+1}h_k(\hat{u}) + u- D_{k+1}h_k(\hat{u})=u, 
 $$
which is exactly what we need. 
\end{proof}

\begin{remark}
\begin{itemize}
 \item The above construction works without a problem for every finitely generated algebra with a Gr\"obner basis of relations, provided that the monomials in the free algebra form a well-ordered set; in that case one can be sure that inductive definitions provide well-defined objects. In the case when the relations are homogeneous, the resolution that we obtain comes with an additional internal grading corresponding to the grading on the quotient algebra that we are trying to resolve. 
 \item A similar construction for the case of free resolution of the trivial module over the given augmented algebra was described by Anick~\cite{Anick} and Kobayashi~\cite{Kobayashi} (see also Lambe~\cite{Lambe}). There are several other ways to construct free resolutions, see e.g. \cite{Brown, Cohen, AlgMorse, Skoldberg} where the idea is to start from the bar complex, select there candidates that we want to be the generators of a smaller free resolution, and construct a contraction of the bar complex on that subcomplex. We shall discuss analogues of these constructions beyond the case of algebras (e.g. for operads) elsewhere.
\end{itemize}
\end{remark}

\subsection{An instructional example: an algebra which resembles the $BV$ operad} \label{like-BV}

In this section, we shall consider a particular example of an associative algebra which we find quite useful.

\begin{definition}
The algebra $\sfA$ is an associative algebra with two generators $x,y$ and two relations
\begin{gather*}
 y^2=0, \\
x^2y+xyx+yx^2=0.
\end{gather*}
\end{definition}

From the Gr\"obner bases viewpoint, this algebra shares many similar features with the operad of Batalin--Vilkovisky algebras (which we shall consider in Section~\ref{BV}): it has relations of degrees $2$ and~$3$, and a Gr\"obner basis is obtained from them by adjoining a relation of degree~$4$, the resolution for a monomial replacement of this algebra is automatically minimal, but deformation of the differential to incorporate lower terms leads to many cancellations etc. However, the relevant computations here are less demanding, so we hope it would be easier for our readers to get a flavour of our approach from this example.

The following result can be checked by a direct computation.
\begin{proposition}
Let $x>y$. For the lexicographic ordering of monomials, the Gr\"obner basis for the algebra $\sfA$ is given by
\begin{gather*}
 y^2=0, \\
x^2y+xyx+yx^2=0,\\
xyxy-yxyx=0.
\end{gather*}
\end{proposition}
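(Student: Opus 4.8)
The plan is to verify that the given set $\{y^2,\; x^2y+xyx+yx^2,\; xyxy-yxyx\}$ is a Gr\"obner basis by running Buchberger's algorithm: I would start from the two defining relations, compute all $S$-polynomials (overlap ambiguities), reduce them modulo the current generating set, and show that the only nontrivial new element produced is the degree-$4$ relation $xyxy-yxyx$, after which all further overlaps reduce to zero. Here the leading monomials under the lexicographic order with $x>y$ are $y^2$ for the first relation and $x^2y$ for the second (since $x^2y > xyx > yx^2$), so I am testing whether these two, together with $xyxy$, form a closed set under $S$-polynomial reduction.

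First I would enumerate the overlaps of the two initial leading monomials $y^2$ and $x^2y$. The self-overlaps and mutual overlaps to examine are: $y^2$ with itself ($yyy$), $x^2y$ with $y^2$ (the tail $y$ of $x^2y$ meets the head of $y^2$, giving the word $x^2yy=x^2y^2$), and $x^2y$ with itself (overlaps where a suffix $x^2y$ matches a prefix, e.g. via the single letter... but $x^2y$ ends in $y$ and begins in $x$, so there is no self-overlap). For the overlap $x^2y^2$: reducing via the second relation replaces $x^2y$ to get $-(xyx+yx^2)y = -xyxy - yx^2y$, while reducing via $y^2$ on the right gives $0$; the difference is the $S$-polynomial, and after reducing $yx^2y$ using the second relation again (applied to the $x^2y$ inside, as a right divisor) one is left with a multiple of $xyxy-yxyx$. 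This is the computation that forces the new degree-$4$ relation into the basis. I would then add $xyxy-yxyx$ (leading monomial $xyxy$, since $xyxy > yxyx$) and recompute.

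Second, with three relations in hand I would check all remaining overlaps: $xyxy$ with $y^2$ (word $xyxyy$), $xyxy$ with $x^2y$ (words where a suffix of $xyxy$ meets a prefix of $x^2y$, and vice versa, e.g. $x^2yxy$ and $xyx^2y$), and $xyxy$ with itself ($xy\cdot xyxy$ giving $xyxyxy$ via the overlap $xy$). For each I would reduce both sides to normal form and confirm the $S$-polynomial reduces to zero. The key cancellations I expect are precisely those anticipated by the surrounding text, which notes that this algebra mimics the $BV$ operad in that ``deformation of the differential leads to many cancellations'': the degree-$4$ relation is the unique new element, and no degree-$5$ or higher relation is needed.

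The main obstacle will be the bookkeeping in reducing the higher overlaps, especially $xyxyxy$ and the mixed overlaps between $xyxy$ and $x^2y$, where a word can be rewritten in several different orders and I must confirm the normal forms agree (confluence). Because the text already asserts the result ``can be checked by a direct computation,'' I would present the $S$-polynomial for the critical overlap $x^2y^2$ explicitly to show how the degree-$4$ relation arises, then state that the remaining overlaps reduce to zero by a routine (if tedious) check, organising them by the degree of the overlap word to make the finiteness of the verification transparent.
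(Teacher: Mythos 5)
Your proposal is correct and is precisely the ``direct computation'' the paper invokes without writing out: a Buchberger/Diamond-Lemma verification in which the leading monomials are $y^2$, $x^2y$, and (later) $xyxy$, the single productive S-polynomial comes from the overlap $x^2y^2$ and reduces to a multiple of $xyxy-yxyx$, and the remaining overlaps $xyxy^2$, $x^2yxy$, $xyxyxy$ all reduce to zero. Two small points to tidy when executing the plan: $xyx^2y$ is not actually an overlap ambiguity (no suffix of $xyxy$ is a prefix of $x^2y$), though listing a spurious overlap is harmless; and the self-overlap $xyxyxy$ reduces to zero only after the extra observation that the intermediate term $yxyxyx$ contains $xyxy$ as an interior factor (positions $2$--$5$), whence it reduces to a word containing $y^2$.
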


This immediately tranlates into results for the monomial version of our algebra, that is the algebra with relations
\begin{gather*}
 y^2=0, \\
x^2y=0,\\
xyxy=0.
\end{gather*}

\begin{proposition}\label{BV-like-alg-monomial}
The homology of the monomial version of~$\sfA$ is represented by the classes  
\begin{gather*}
 x,y\in\Tor_1,\\
 y^{k+2}\in\Tor_{k+2} (k\ge0), \\
x^2y^{k+1}\in\Tor_{k+2} (k\ge0),\\
(xy)^{l+1}xy^{k+1}\in\Tor_{k+l+2} (k,l\ge0),\\
x(xy)^{l+1}xy^{k+1}\in\Tor_{k+l+3} (k,l\ge0).
\end{gather*}
\end{proposition}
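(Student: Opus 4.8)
The plan is to apply Theorem~\ref{min-resol} directly to the monomial algebra with relations $y^2$, $x^2y$, $xyxy$, enumerating those monomials $M = x_I \otimes S_{i_1}\wedge\ldots\wedge S_{i_q}$ that satisfy conditions (i) and (ii), using the ordering of divisors by starting point so that (by the final Proposition) representatives are precisely the Anick chains. So concretely I would enumerate the Anick $q$-chains for this monomial algebra: monomials obtained by linking the three relations $y^2$, $x^2y$, $xyxy$ end-to-end so that consecutive relations overlap (share at least the connecting letter), no proper left divisor is already a chain of the same length, and each newly appended relation is a right divisor of the overlap. This is the combinatorial heart of the computation.

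First I would work out how the three relations can overlap, since that controls which chains exist. The key overlaps are: $y^2$ onto $y^2$ (overlapping in one $y$, producing $y^3, y^4,\ldots$); the tail $y$ of $x^2y$ feeding into $y^2$ (giving $x^2y^2, x^2y^3,\ldots$); and the relation $xyxy$ overlapping with itself or with $x^2y$ on the shared $xy$ or $y$. I would tabulate the $0$-chains ($x$ and $y$, giving the $\Tor_1$ classes), then the $1$-chains (the three relations themselves, reduced modulo the chain condition), and then systematically build higher chains. The goal is to show that the complete list of chains is exactly $y^{k+2}$, $x^2y^{k+1}$, $(xy)^{l+1}xy^{k+1}$, and $x(xy)^{l+1}xy^{k+1}$ with the stated homological (chain-length) degrees, matching the claimed $\Tor$ positions. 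I would verify degrees by counting the number $q$ of linked relations in each family: e.g. $y^{k+2}$ links $k+1$ copies of $y^2$, giving chain length $k+2$ and hence a $\Tor_{k+2}$ class, and similarly for the other families.

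The main obstacle I expect is the bookkeeping around the degree-$4$ relation $xyxy$ and its interaction with $x^2y$: one must check carefully that chains built from these are exactly the two families $(xy)^{l+1}xy^{k+1}$ and $x(xy)^{l+1}xy^{k+1}$, with the right chain lengths, and that no spurious chains arise from unexpected overlaps (for instance, ruling out combinations that violate the ``no proper left divisor is a chain'' condition of (ii), or that fail the right-divisor requirement in the Anick recursion). The careful enforcement of condition (ii) is what prevents overcounting, and getting the precise exponent ranges $k,l\ge 0$ right is where sign-off requires attention. Once the chain enumeration is complete, the identification of homological degrees with the chain length $q$ is immediate from the correspondence between a $q$-chain $m$ and the generator $m \otimes S_1\wedge\ldots\wedge S_q$, so no further argument about the differential is needed for the monomial case.
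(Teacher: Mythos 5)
Your proposal is correct and would produce exactly the stated list, but it is routed differently from the paper's own (one-line) proof. You select representatives inside the resolution by applying Theorem~\ref{min-resol} with the starting-point ordering of divisors, i.e.\ you enumerate Anick chains and filter candidate monomials through conditions (i) and (ii). The paper instead observes that the listed monomials are precisely \emph{all} of the generators of the inclusion--exclusion resolution of Section~\ref{ass_monom} (the monomials carrying an indecomposable covering by $y^2$, $x^2y$, $xyxy$), and that in each of them only neighbouring relations overlap; hence the differential of every generator lands in the decomposables, the resolution is already minimal, and its generators represent the homology with no selection step at all. In this example the two routes coincide, because every indecomposable covering here is automatically a chain, so your conditions (i) and (ii) turn out to be vacuous; the combinatorial core --- classifying how the three relations can link ($y^2$ after $y^2$, after $x^2y$, or after $xyxy$; $xyxy$ after itself or after $x^2y$ on the shared $xy$; while $x^2y$ can only ever start a chain) --- is identical in both arguments. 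What your route buys is uniformity: it works verbatim even when the inclusion--exclusion resolution fails to be minimal. What the paper's route buys is brevity, plus the explicit extra conclusion that the monomial resolution itself is minimal, which is the form in which Proposition~\ref{BV-like-alg-monomial} is used in the deformation argument of Theorem~\ref{BV-like-alg}. One detail to tighten in your write-up: the linking of $y^2$ onto the tail of $xyxy$, which is what produces the family $(xy)^{l+1}xy^{k+1}$ with $k\ge1$, should appear explicitly in your list of allowed overlaps; as phrased, your enumeration attributes the $y$-overlaps only to $y^2$ and $x^2y$, and omitting that case would lose part of that family.
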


\begin{proof}
These elements are precisely the generators of the free resolution, which in this case is minimal because only neighbour relations overlap in them. 
\end{proof}

\begin{theorem}\label{BV-like-alg}
The homology $\Tor^\sfA_\ldot(\k,\k)$ of the algebra $\sfA$ is represented by the classes  
\begin{gather*}
 x,y\in\Tor_1,\\
 y^{k+2}\in\Tor_{k+2} (k\ge0), \\
x(xy)^{k+1}\in\Tor_{k+2} (k\ge0).
\end{gather*}
\end{theorem}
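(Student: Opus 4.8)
The plan is to apply the deformation machinery of Section~\ref{general} to the Gr\"obner basis of $\sfA$ and then read off the homology from the induced differential on generators. By that theorem, the deformed complex $(A_G,D)$ is a free resolution, and the differential $\bar D$ it induces on the space of indecomposables $(A_G)_{ab}$ computes the bar homology, i.e.\ $\Tor^\sfA_\ldot(\k,\k)=H_\ldot((A_G)_{ab},\bar D)$. By Proposition~\ref{BV-like-alg-monomial} this space has as a basis the Anick chains listed there, and for the monomial algebra the induced differential \emph{vanishes} (minimality); the entire content of the theorem is therefore to compute the correction to $\bar D$ coming from the lower-order terms $xyx+yx^2$ of the cubic relation and $-yxyx$ of the quartic one.

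First I would encode each chain by its sequence of linked relations, writing $A$, $B$, $Y$ for the leading monomials $x^2y$, $xyxy$, $y^2$. The overlap rules ($A$ and $B$ may be followed only by $B$ or $Y$, while $Y$ is followed only by $Y$, and $A$ can occur at most once and only at the front) show that every chain involving at least one relation has the form $A^{\epsilon}B^mY^k$ with $\epsilon\in\{0,1\}$ and $m,k\ge 0$; the families of Proposition~\ref{BV-like-alg-monomial} are exactly $Y^k$, $AY^k$, $B^mY^k$ and $AB^mY^k$. Since all relations and all Gr\"obner basis elements are homogeneous, $\bar D$ preserves word length, so $((A_G)_{ab},\bar D)$ splits as a direct sum of finite complexes indexed by internal degree $N$; this finiteness is what legitimises the collapsing argument below.

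The main step is the computation of $\bar D$. The monomial relation $Y=y^2$ carries no lower-order terms, so the pure chains $Y^k$ are genuine cycles untouched by the deformation, and they furnish the classes $y^{k+2}$. For chains involving $A$, the deformation forces a reduction that invokes the extra Gr\"obner element $B=xyxy-yxyx$: the overlap of the leading relation $A$ with a following $Y$ reduces, modulo $y^2=0$, precisely to $B$ (this is the S-polynomial producing the degree-$4$ relation). I expect to prove that
\begin{equation*}
\bar D\bigl(AB^mY^k\bigr)=\pm\,B^{m+1}Y^{k-1}+(\text{strictly higher terms})\qquad(m\ge 0,\ k\ge 1),
\end{equation*}
with a nonzero matched coefficient and all remaining summands strictly larger in the length-lexicographic order governing the deformation. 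This exhibits $\bar D$ as a triangular map realising the perfect matching $AB^mY^k\leftrightarrow B^{m+1}Y^{k-1}$ of all chains except the pure powers $Y^k$ and the chains $AB^m$ carrying no trailing~$Y$.

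Finally, by the standard algebraic discrete-Morse argument, a triangular matching with invertible matched entries collapses every matched pair, so $H_\ldot((A_G)_{ab},\bar D)$ is freely spanned by the unmatched chains together with the two degree-one generators: namely $x,y$; the pure chains in $Y$, which are the $y^{k+2}$; and the chains $AB^m$ ($m\ge 0$), which are the monomials $x(xy)^{m+1}$. This is exactly the asserted list. The hard part will be the displayed identity for $\bar D$: the deformation is defined by a nested induction, and one must control how the lower-order terms of the cubic relation cascade through the contracting homotopy $H$ in order to see that the matched coefficient survives and that nothing of equal or lower order disturbs the triangular structure.
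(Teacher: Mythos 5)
Your proposal is correct and is essentially the paper's own argument: the paper likewise takes the Anick chains of Proposition~\ref{BV-like-alg-monomial}, computes that the deformed differential satisfies $D(x^2y^{k+2})=xyxy^{k+1}+\text{lower terms}$ and $D(x(xy)^{l+1}xy^{k+2})=(xy)^{l+2}xy^{k+1}+\text{lower terms}$ --- precisely your matching $AB^mY^k\leftrightarrow B^{m+1}Y^{k-1}$ --- and then cancels these matched pairs, leaving exactly $x$, $y$, $y^{k+2}$ and $x(xy)^{k+1}$. One immaterial discrepancy: the deformation lemma of Section~\ref{general} makes all correction terms lex-\emph{lower} than the leading monomial of the source (not higher, as you wrote), but either direction of triangularity, together with the degreewise finiteness you invoke, yields the same collapse (and in fact a bidegree count shows there are no correction terms at all in the relevant components).
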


\begin{proof}
One can check that for the deformed differential~$D$ we have
\begin{gather*}
D(x^2y^{k+2})=xyxy^{k+1}+\text{lower terms},\\
D(x(xy)^{l+1}xy^{k+2})=(xy)^{l+2}xy^{k+1}+\text{lower terms},
\end{gather*}
which leaves from the cocycles described in Proposition~\ref{BV-like-alg-monomial} only
$y^{k+2}$ for all $k\ge0$, $x^2y^{k+1}$ for $k=0$ (which coincides with $x(xy)^{k+1}$ for $k=0$),
and $x(xy)^{l+1}xy^{k+1}$ for $k=0$ (which coincides with $x(xy)^{k+1}$ for $k=l+1$). The images of
these elements under the differential are killed by the augmentation, and the theorem follows.
\end{proof}

In fact, it is possible to exhibit a minimal resolution of $\sfA$. Let us denote by $a_k$ the homology class represented
by $y^k$, and by $b_k$ the homology class represented by $x(xy)^{k-1}$; these classes form a basis of the two-dimensional
space $V_k=\Tor^\sfA_k(\k,\k)$. We leave it to the reader to verify that the following formulas define a resolution of $\k$ by free $\sfA$-modules $V_k\otimes \sfA$:
\begin{gather*}
D(a_k) = a_{k-1}\otimes y \\
D(b_k) = 
\left\{
\begin{aligned}
&b_{k-1}\otimes xy  - a_{k-1}\otimes x^k, \qquad\qquad k\equiv 0\pmod{3},\\
&b_{k-1}\otimes yx  + a_{k-1}\otimes x^k, \qquad\qquad k\equiv 1\pmod{3},\\
&b_{k-1}\otimes (xy + yx)  + a_{k-1}\otimes x^k, \quad k\equiv 2\pmod{3}.\\
\end{aligned}
\right.
\end{gather*} 

\begin{remark}
Another interesting feature of the algebra~$\sfA$ we considered here is that it provides a counterexample to a result of Farkas~\cite{Farkas} who gave a very simple formula for differentials in the Anick resolution in terms of the Gr\"obner basis of~$\sfA$. One can easily check that if we apply the Farkas' differential to the chain $x^2y^2$ once, we get the element $x^2y\otimes y$, and if we apply the differential once again, we get $x\otimes yxy-y\otimes xyx-y\otimes yx^2$, a nonzero element of the resolution. Therefore, Farkas' formula for the differential is generally incorrect, and there seem to be no formula that is as simple as the one he suggested. For the contrast, there are various algorithmic approaches to computing maps in the Anick resolution, see e.g.~\cite{AlgorResol1,AlgorResol}.
\end{remark}

\section{Operads}\label{operads}

\subsection{Shuffle operads}

For information on symmetric and nonsymmetric operads, we refer the reader to the monograph~\cite{MSS}, for information on shuffle operads and Gr\"obner bases for operads --- to our paper~\cite{DK}. Throughout this paper by an operad we mean a shuffle operad, unless otherwise specified; there is no machinery of Gr\"obner bases available in the symmetric case, so we have to sacrifice the symmetric groups action. However, once we move to the shuffle category, all constructions of the previous section work perfectly fine (in fact, the construction for algebras is a particular case of the construction below, applied to algebras thought of as operads concentrated in arity~$1$).

For the monoidal category of shuffle operads, it is possible to define the bar complex of an augmented operad~$\calO$. The bar complex $\mathbf{B}^\udot(\calO)$ is a dg-cooperad freely generated by the degree shift $\calO_+[1]$ of the augmentation ideal of $\calO$; the differential comes from operadic compositions in~$\calO$. Similarly, for a cooperad $\calQ$, it is possible to define the cobar complex~$\Omega^\udot(\calQ)$, which is a dg-operad freely generated by~$\calQ_+[-1]$, with the appropriate differential. The bar-cobar construction $\Omega^\udot(\mathbf{B}^\udot(\calO))$ gives a free resolution of~$\calO$. This can be proved in a rather standard way, similarly to known proofs in the case of operads, properads etc.~\cite{GK,F,ValPROP}. The general homotopical algebra philosophy mentioned in the introduction is applicable in the case of operads as well; various checks and justifications needed to ensure that are quite standard and similar to the ones available in the literature; we refer the reader to \cite{BM, Fresse, Harper1, MarklModel, Rezk, Spitzweck} where symmetric operads are handled. 

It is important to recall here that the forgetful functor ${}^f\colon\calP\to\calP^f$ from the category of symmetric operads to the category of shuffle operads is monoidal~\cite{DK}, which easily implies that for a symmetric operad~$\calP$, we have 
 $$
\mathbf{B}^\udot(\calP)^f\simeq\mathbf{B}^\udot(\calP^f), 
 $$
that is the (symmetric) bar complex of $\calP$ is naturally identified, as a shuffle dg-cooperad, with the (shuffle) bar complex of~$\calP^f$. Thus, our approach would enable us to compute the homology even in the symmetric case, only without information on the symmetric groups action. 

\subsection{Tree monomials}

Let us recall tree combinatorics used to describe monomials in shuffle operads. See~\cite{DK} for more details.

Basis elements of the free operad are represented by (decorated) trees. A (rooted) \emph{tree} is a non-empty connected directed graph $T$ of genus~$0$ for which each vertex has at least one incoming edge and exactly one outgoing edge. Some edges of a tree might be bounded by a vertex at one end only. Such edges are called \emph{external}. Each tree should have exactly one outgoing external edge, its \emph{output}. The endpoint of this edge which is a vertex of our tree is called the \emph{root} of the tree. The endpoints of incoming external edges which are not vertices of our tree are called \emph{leaves}.

Each tree with~$n$ leaves should be (bijectively) labelled by~$[n]$. For each vertex $v$ of a tree, the edges going in and out of $v$ will be referred to as inputs and outputs at~$v$. A tree with a single vertex is called a \emph{corolla}. There is also a tree with a single input and no vertices called the \emph{degenerate} tree. Trees are originally considered as abstract graphs but to work with them we would need some particular representatives that we now going to describe.

For a tree with labelled leaves, its canonical planar representative is defined as follows. In general, an embedding of a (rooted) tree in the plane is determined by an ordering of inputs for each vertex. To compare two inputs of a vertex~$v$, we find the minimal leaves that one can reach from~$v$ via the corresponding input. The input for which the minimal leaf is smaller is considered to be less than the other one. Note that this choice of a representative is essentially the same one as we already made when we identified symmetric compositions with shuffle compositions.

Let us introduce an explicit realisation of the free operad generated by a collection $\calM$. The basis of this operad will be indexed by planar representative of trees with decorations of all vertices. First of all, the simplest possible tree is the degenerate tree; it corresponds to the unit of our operad. The second simplest type of trees is given by corollas. We shall fix a basis~$B^\calM$ of $\calM$ and decorate the vertex of each corolla with a basis element; for a corolla with $n$ inputs, the corresponding element should belong to the basis of~$\calV(n)$. The basis for whole free operad consists of all planar representatives of trees built from these corollas 
(explicitly, one starts with this collection of corollas, defines compositions of trees in terms of grafting, and then considers all trees obtained from corollas by iterated shuffle compositions). We shall refer to elements of this basis as \emph{tree monomials}.

There are two standard ways to think of elements of an operad defined by generators and relations: using either tree monomials or operations. Our approach is somewhere in the middle: we prefer (and strongly encourage the reader) to think of tree monomials, but to write formulas required for definitions and proofs we prefer the language of operations since it makes things more compact. 

Let us give an example of how to translate between these two languages. Let $\calO=\calF_\calM$ be the free operad for which the only nonzero component of $\calM$ is $\calM(2)$, and the basis of $\calM(2)$ is given by 
 $$
2\includegraphics[scale=0.8]{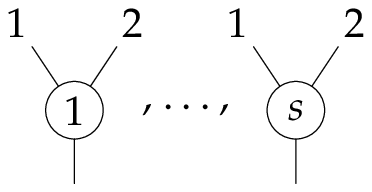}
 $$
Then the basis of $\calF_\calM(3)$ is given by the tree monomials
 $$
\includegraphics[scale=0.8]{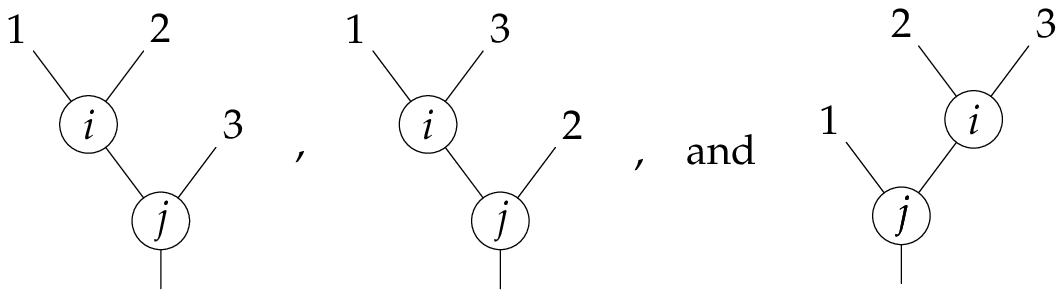}
 $$
with $1\le i,j\le s$.

If we assume that the $j^\text{th}$ corolla corresponds to the operation $$\mu_j\colon a_1,a_2\mapsto\mu_j(a_1,a_2),$$ then the above tree monomials correspond to operations
 $$
\mu_j(\mu_i(a_1,a_2),a_3), \quad \mu_j(\mu_i(a_1,a_3),a_2),\quad \text{and} \quad \mu_j(a_1,\mu_i(a_2,a_3))
 $$
respectively. 

Take a tree monomial~$\alpha\in\calF_\calM$. If we forget the labels of its vertices and its leaves, we get a planar tree. We shall refer to this planar tree as \emph{the underlying tree of~$\alpha$}. Divisors of $\alpha$ in the free operad correspond to a special kind of subgraphs of its underlying tree. Allowed subgraphs contain, together with each vertex, all its incoming and outgoing edges (but not necessarily other endpoints of these edges). Throughout this paper we consider only this kind of subgraphs, and we refer to them as subtrees hoping that it does not lead to any confusion. 

Clearly, a subtree~$T'$ of every tree~$T$ is a tree itself. Let us define the tree monomial $\alpha'$ corresponding to~$T'$. To label vertices of~$T'$, we recall the labels of its vertices in~$\alpha$.  We immediately observe that these labels match the restriction labels of a tree monomial should have: each vertex has the same number of inputs as it had in the original tree, so for a vertex with $n$ inputs its label does belong to the basis of~$\calM(n)$. To label leaves of~$T'$, note that each such leaf is either a leaf of~$T$, or is an output of some vertex of~$T$. This allows us to assign to each leaf~$l'$ of~$T'$ a leaf~$l$ of~$T$: if $l'$ is a leaf of~$T$, put $l=l'$, otherwise let~$l$ be the smallest leaf of $T$ that can be reached through~$l'$. We then number the leaves according to these ``smallest descendants'': the leaf with the smallest 
possible descendant gets the label~$1$, the second smallest~--- the label~$2$ etc. 

\begin{example}
Let us consider the tree monomial 
 $$ 
\includegraphics[scale=0.9]{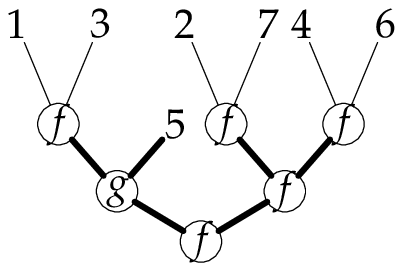}
 $$
in the free operad with two binary generators labelled $a$ and $b$; in the language of operations, it corresponds to the expression $$f(g(f(a_1,a_3),a_5),f(f(a_2,a_7),f(a_4,a_6))).$$
One of its subtrees (indicated by bold lines in the figure) produces the tree monomial
 $$ 
\includegraphics[scale=0.9]{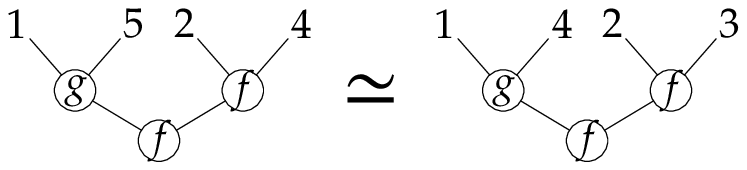}
 $$
(in the language of operations $f(g(a_1,a_5),f(a_2,a_4))\simeq f(g(a_1,a_4),f(a_2,a_3))$), where the labels on the left come from minimal leaves, as explained above. Note that even though in this example the subtree shares the root vertex with the original tree, in general it is not required.
\end{example}

For two tree monomials $\alpha$, $\beta$ in the free operad $\calF_\calM$, we say that \emph{$\alpha$ is divisible by $\beta$}, if there exists a subtree of the underlying tree of $\alpha$ for which the corresponding tree monomial $\alpha'$ is equal to~$\beta$.

Let us introduce an example of an operad which will be used to illustrate methods of this section. It was defined and studied by Markl and Remm in~\cite{MR}.
\begin{definition}
The anti-associative operad $\widetilde{\As}$ 
is the nonsymmetric operad with one generator $f(\textrm{-},\textrm{-})\in\widetilde{\As}(2)$ and one relation 
\begin{equation}\label{AntiAsRel}
f(f(\textrm{-},\textrm{-}),\textrm{-})+f(\textrm{-},f(\textrm{-},\textrm{-}))=0. 
\end{equation}
 \end{definition}

For the path-lexicographic ordering, the element $f(f(f(\textrm{-},\textrm{-}),\textrm{-}),\textrm{-})$ is a small common multiple of the leading monomial with itself, and the corresponding S-polynomial is equal to $2f(\textrm{-},f(\textrm{-},f(\textrm{-},\textrm{-})))$. These relations together already imply that $\calA(k)=0$ for $k\ge4$, so we have the following
\begin{proposition}
Two relations $f(f(\textrm{-},\textrm{-}),\textrm{-})+f(\textrm{-},f(\textrm{-},\textrm{-}))$ and $f(\textrm{-},f(\textrm{-},f(\textrm{-},\textrm{-})))$ form a Gr\"obner basis of relations that define~$\widetilde{\As}$.
\end{proposition}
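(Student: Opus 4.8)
The plan is to apply the operadic analogue of Buchberger's criterion established in~\cite{DK}: a set of elements of the free shuffle operad is a Gr\"obner basis of the operadic ideal it generates precisely when every S-polynomial coming from an overlap of leading terms reduces to zero modulo that set. First I would record the leading monomials for the path-lexicographic order. The quadratic relation~\eqref{AntiAsRel} has leading term the left comb, $\lt\bigl(f(f(\textrm{-},\textrm{-}),\textrm{-})+f(\textrm{-},f(\textrm{-},\textrm{-}))\bigr)=f(f(\textrm{-},\textrm{-}),\textrm{-})$, while the cubic relation $f(\textrm{-},f(\textrm{-},f(\textrm{-},\textrm{-})))$ is already a single tree monomial, so it equals its own leading term and its rewriting rule simply sends it to~$0$.

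Before checking overlaps I would note that the cubic relation really lies in the ideal generated by the quadratic one: it is (half of) the S-polynomial of the leading term with itself, evaluated just before the statement on the small common multiple $f(f(f(\textrm{-},\textrm{-}),\textrm{-}),\textrm{-})$ to be $2f(\textrm{-},f(\textrm{-},f(\textrm{-},\textrm{-})))$, and S-polynomials belong to the ideal generated by the relations. Hence the two relations define the same operad $\widetilde{\As}$ as the single relation~\eqref{AntiAsRel}, and it remains only to verify the Gr\"obner property.

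The step that makes the verification painless is a description of the normal monomials, that is, the tree monomials divisible by neither leading term. Avoiding the pattern $f(f(\textrm{-},\textrm{-}),\textrm{-})$ forces every internal vertex to receive a leaf through its left input, so the monomials avoiding it are exactly the right combs $f(\textrm{-},f(\textrm{-},\cdots f(\textrm{-},\textrm{-})))$; among these, avoiding $f(\textrm{-},f(\textrm{-},f(\textrm{-},\textrm{-})))$ discards every right comb of arity $\ge 4$. Thus the normal monomials occur only in arities $\le 3$ (consistently with the recalled vanishing $\widetilde{\As}(k)=0$ for $k\ge 4$, and with $\dim\widetilde{\As}(2)=\dim\widetilde{\As}(3)=1$), and in particular there are none of arity $\ge 4$. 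Since reduction preserves arity and terminates in a combination of normal monomials, any element of the free operad of arity $\ge 4$ therefore reduces to $0$.

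Finally I would observe that every S-polynomial to be checked has arity at least~$4$: the self-overlap of the quadratic leading term sits in arity~$4$ (it reduces to $2f(\textrm{-},f(\textrm{-},f(\textrm{-},\textrm{-})))$ and hence, via the cubic rule, to~$0$), and any overlap involving the cubic term sits in arity $\ge 4$ because the cubic leading term already has arity~$4$. By the previous paragraph all of them reduce to~$0$, so the criterion of~\cite{DK} gives the result. The only genuinely computational point is the single self-S-polynomial of the quadratic relation, which is already evaluated in the text; I would flag the conceptual heart of the argument not as an obstacle but as the observation that $\widetilde{\As}$ carries no normal monomials in arity $\ge 4$, which forces all remaining S-polynomials to vanish automatically rather than requiring a case-by-case overlap analysis.
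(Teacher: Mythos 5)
Your proposal is correct and follows essentially the same route as the paper: the paper's entire argument is the sentence preceding the proposition (the single self-overlap S-polynomial equals $2f(\textrm{-},f(\textrm{-},f(\textrm{-},\textrm{-})))$, and the two relations force vanishing in arity $\ge 4$), with the conclusion left implicit. Your write-up simply makes that implicit step precise --- no normal monomials in arity $\ge 4$, hence every remaining S-polynomial reduces to zero automatically by the Buchberger/Diamond Lemma criterion of~\cite{DK} --- which is exactly the intended reasoning.
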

The corresponding operad with monomial relations is defined by relations $f(f(\textrm{-},\textrm{-}),\textrm{-})=0$ and $f(\textrm{-},f(\textrm{-},f(\textrm{-},\textrm{-})))=0$,
and has a monomial basis $\{\id, f, g:=f(\textrm{-},f(\textrm{-},\textrm{-}))\}$.

\subsection*{Operads in the differential graded setting} 

The above description of the free shuffle operad works almost literally when we work with operads whose components are chain complexes (as opposed to vector spaces), and the symmetric monoidal structure on the corresponding category involves signs. The only difference is that every tree monomial should carry an ordering of its internal vertices, so that two different orderings contribute appropriate signs. In this section, we give an examples of a shuffle dg-operad that should help a reader to understand the graded case better; it is very close to the (ungraded) anti-associative operad which we discuss throughout the paper. Lie the anti-associative operad, it is also introduced in~\cite{MR}.

\begin{definition}
The odd $(2k+1)$-associative operad is a nonsymmetric operad with one generator~$\mu$ of arity~$n=2k+1$ and homological degree~$2l+1$, and relations $\mu\circ_p\mu=\mu\circ_{2k+1}\mu$ for all~$p\le 2k$. 
\end{definition}

Let us show that the Buchberger algorithm for operads from~\cite{DK} discovers a cubic relation in the Gr\"obner basis for this operad, thus showing that this operad fails to be PBW. We use the path-lexicographic ordering of monomials. 

From the common multiple $(\mu\circ_1\mu)\circ_1\mu$ of the leading term $\mu\circ_1\mu$ with itself, we compute the S-polynomial
 $$
(\mu\circ_n\mu)\circ_1\mu-\mu\circ_1(\mu\circ_n\mu). 
 $$
We can perform the following chain of reductions (with leading monomials underlined):
\begin{multline*}
(\mu\circ_n\mu)\circ_1\mu-\underline{\mu\circ_1(\mu\circ_n\mu)}=
(\mu\circ_n\mu)\circ_1\mu-\underline{(\mu\circ_1\mu)\circ_n\mu}\mapsto\\ \mapsto
\underline{(\mu\circ_n\mu)\circ_1\mu}-(\mu\circ_n\mu)\circ_n\mu=
-\underline{(\mu\circ_1\mu)\circ_{2n-1}\mu}-(\mu\circ_n\mu)\circ_n\mu\mapsto\\ \mapsto
-(\mu\circ_n\mu)\circ_{2n-1}\mu-\underline{(\mu\circ_n\mu)\circ_n\mu}=
-(\mu\circ_n\mu)\circ_{2n-1}\mu-\underline{\mu\circ_n(\mu\circ_1\mu)}\mapsto\\ \mapsto
-(\mu\circ_n\mu)\circ_{2n-1}\mu-\mu\circ_n(\mu\circ_n\mu)=
-2(\mu\circ_n\mu)\circ_{2n-1}\mu.
\end{multline*}
Note that we used the formula $(\mu\circ_n\mu)\circ_1\mu=-(\mu\circ_1\mu)\circ_{2n-1}\mu$ which reflect the fact that the operation $\mu$ is of odd homological degree.

The monomial $(\mu\circ_n\mu)\circ_{2n-1}\mu$ cannot be reduced further, and we recover the relation $(\mu\circ_n\mu)\circ_{2n-1}\mu=0$ discovered in~\cite{MR}. Furthermore, we arrive at the following proposition (note the similarity with the computation of the Gr\"obner basis for the operad~$\AntiCom$ in~\cite{DK}).

\begin{proposition}
Elements $\mu\circ_p\mu-\mu\circ_{2k+1}\mu$ with $1\le p\le 2k$ and $(\mu\circ_n\mu)\circ_{2n-1}\mu$ form a Gr\"obner basis for the operad of odd $(2k+1)$-associative algebras.
\end{proposition}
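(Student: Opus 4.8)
The plan is to verify the operadic Buchberger criterion from~\cite{DK}: the given set is a Gr\"obner basis if and only if, for every pair of its leading monomials and every small common multiple of that pair, the associated S-polynomial reduces to zero modulo the whole set. The leading monomials in question are the quadratic monomials $\mu\circ_p\mu$ for $1\le p\le 2k$ together with the cubic monomial $(\mu\circ_n\mu)\circ_{2n-1}\mu$; note that the latter is a \emph{monomial} relation, so any tree monomial divisible by it rewrites to zero in a single step. I would begin by listing all small common multiples, organising them by the shape of the underlying tree.

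First I would treat the overlaps of two quadratic leading monomials. Since each quadratic pattern has two vertices and a small common multiple is covered by the two patterns and shares at least one vertex, every such common multiple is a cubic tree, and these split into two shapes. In the \emph{comb} shape the two copies of $\mu\circ_\bullet\mu$ share their middle vertex, producing the monomials $(\mu\circ_p\mu)$ with a third $\mu$ grafted at the $q$-th input of the middle vertex, for $1\le p,q\le 2k$; this is precisely the family of which $(\mu\circ_1\mu)\circ_1\mu$ is the sample case computed above. In the \emph{broad} shape the two copies share only the root and their children sit at two distinct inputs; here the two reductions act on edges that meet only in the root vertex, so they commute and the S-polynomial reduces to zero automatically.

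Next I would dispose of every critical pair involving the cubic monomial. Because all internal compositions of $(\mu\circ_n\mu)\circ_{2n-1}\mu$ occur at the last input, this monomial contains no quadratic leading monomial as a divisor; consequently in any small common multiple it meets a quadratic pattern, or a second copy of itself, in exactly one vertex. For two copies of the cubic both reductions send the common multiple to zero, so the S-polynomial vanishes identically; for a cubic together with a quadratic, the quadratic reduction acts on an edge disjoint from the cubic pattern, the cubic pattern survives, and the reduced term is again killed through the cubic relation. Thus all of these S-polynomials reduce to zero, and the only remaining work is the comb family.

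For the comb overlaps the S-polynomial is the difference of the two one-step reductions, rewriting the upper quadratic versus the lower one, and I would compute it by the same chain of reductions as the displayed one, which for $p=q=1$ ends at $-2(\mu\circ_n\mu)\circ_{2n-1}\mu$. The claim to establish is that for every $1\le p,q\le 2k$ this S-polynomial reduces, modulo the quadratic relations, either to zero or to a scalar multiple of $(\mu\circ_n\mu)\circ_{2n-1}\mu$, which now belongs to our set and hence rewrites to zero. The hard part is the bookkeeping: the odd homological degree of $\mu$ forces signs such as the identity $(\mu\circ_n\mu)\circ_1\mu=-(\mu\circ_1\mu)\circ_{2n-1}\mu$ used above, and applying a quadratic relation inside a larger tree entails renumbering the leaves of the affected subtree. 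Carrying these signs and relabelings uniformly through the whole range of $p$ and $q$, rather than only the sample case, is the delicate step; once it is done, the Buchberger criterion is satisfied and the proposition follows.
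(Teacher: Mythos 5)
Your overall framework---invoke the operadic Buchberger criterion, enumerate the small common multiples by shape, and reduce each S-polynomial---is the right one, and it is in fact more systematic than what the paper records: the paper only carries out the displayed reduction for the single overlap $(\mu\circ_1\mu)\circ_1\mu$ and then asserts the proposition, pointing to the analogy with the $\AntiCom$ computation of~\cite{DK}. However, two of your ``automatic'' dismissals rest on a false picture of how operadic rewriting acts on a tree, and as stated those steps fail.

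Rewriting a divisor $\mu\circ_p\mu$ into $\mu\circ_n\mu$ (with $n=2k+1$) re-attaches the subtrees hanging at the leaves of the divisor according to the correspondence of leaf labels between the two patterns. The leaf labels $p+n,\dots,2n-1$ sit on the \emph{root} vertex in $\mu\circ_p\mu$ but on the \emph{child} in $\mu\circ_n\mu$, so a sibling subtree grafted at a root slot $q>p$ (leaf label $q+n-1$) is relocated by the rewriting onto slot $q$ of the child. Consequently, in your broad shape (children at slots $p<q\le 2k$) the two reductions do not commute: rewriting the divisor at slot $p$ moves the other child onto the relocated vertex and produces the comb $\mu\circ_n(\mu\circ_q\mu)$, and this critical pair is resolved only after further reductions, all of which terminate at $\pm(\mu\circ_n\mu)\circ_{2n-1}\mu$---so the broad family needs the cubic element just as the comb family does, and it is not disposed of by locality. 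The same phenomenon breaks your quadratic--cubic case: for the overlap in which the quadratic pattern sits above the cubic chain (root $A$ with $X$ at slot $p$, and $X\to Y\to Z$ the cubic chain with compositions at slot $n$), rewriting the quadratic yields the tree with $X$ at slot $n$ of $A$, $Y$ at slot $p$ of $X$, $Z$ at slot $n$ of $Y$, which contains \emph{no} cubic divisor at all: the cubic pattern does not survive, and one more quadratic rewriting is needed before a cubic divisor reappears. All of these S-polynomials do reduce to zero, so the proposition stands, but establishing this requires the same hands-on reductions (with the odd-degree signs) that you reserve for the comb family alone. Note that this is precisely the feature absent from the $\AntiCom$ computation the paper cites: there the generator is binary with a single leading slot, so broad overlaps and above-the-cubic overlaps never arise, whereas here (for $k\ge1$) they do and carry real content.
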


\subsection{Inclusion-exclusion resolution}\label{oper_monom}

Let us construct a free resolution for an arbitrary operad with monomial relations. Assume that the operad~$\calO$ is generated by a collection of finite sets $\calM=\{\calM(n)\}$, with $m$ monomial relations $g_1,\ldots,g_m$ (this means that every tree monomial divisible by any of the relations is equal to zero), $\calO=\calF_\calM/(g_1,\ldots,g_m)$. We denote by $\calA(T,q)$ the vector space with the basis consisting of all elements $T\otimes S_1\wedge S_2\wedge\ldots\wedge S_q$ where $T$ is a tree monomial from the free shuffle operad~$\calF_M$, and $S_1,\ldots,S_q$, $q\ge0$, are tree divisors of~$T$. 

The differential $d$ with
\begin{equation}
d(T\otimes S_1\wedge \ldots\wedge S_q)= \sum_{l} (-1)^{l-1} T\otimes S_1\wedge\ldots\wedge\hat{S_l}\wedge\ldots\wedge S_q  
\end{equation}
makes the graded vector space
\begin{equation}
\calA(n)=\bigoplus_{T\text{ with $n$ leaves}}\bigoplus_q \calA(T,q) 
\end{equation}
into a chain complex. There is also a natural operad structure on the collection $\calA=\{\calA(n)\}$; the operadic composition composes the trees, and computes the wedge product of divisors (using, as in the case of algebras the identification $\imath$ of tree divisors of $T$ and $T'$ with the corresponding divisors of their composition). Overall, we defined a shuffle dg-operad.

Let $\calG=\{g_1,\ldots,g_m\}$ be the set of relations of our operad. The dg-operad $(\calA_\calG,d)$ is spanned by the elements $T\otimes S_1\wedge\ldots\wedge S_k$ where for each~$j$ the divisor of $T$ corresponding to $S_j$ is a relation. The differential~$d$ is the restriction of the differential defined above. Informally, an element of the operad $\calA_\calG$ is a tree with some distinguished divisors that are relations from the given set.

The following theorem is proved analogously to its counterpart for associative algebras, Theorem~\ref{incl-excl} above.

\begin{theorem}
The dg-operad $(\calA_\calG,d)$ is a free resolution of the corresponding operad with monomial relations $\calO=\calF_\calM/(\calG)$.
\end{theorem}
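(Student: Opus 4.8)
The plan is to imitate the proof of Theorem~\ref{incl-excl} step by step, replacing contiguous divisors of a word by tree divisors (subtrees) of a tree monomial. First I would introduce the operadic analogue of an indecomposable collection of divisors: call a collection $S_1,\ldots,S_q$ of subtrees of a tree monomial~$T$ \emph{indecomposable} if each internal edge of~$T$ is contained in at least one of the~$S_j$. Just as in the algebra case, a decomposable collection corresponds to cutting~$T$ along an uncovered internal edge, realising it as an operadic (grafting) composition $T=T'\circ T''$ with the distinguished subtrees distributed among the two factors via the identification~$\imath$. This shows that the free shuffle operad $\calA$ is generated by the corollas $c\otimes 1$ together with the elements $T\otimes S_1\wedge\ldots\wedge S_q$ for which $S_1,\ldots,S_q$ is an indecomposable collection, and that $\calA_\calG$ is freely generated by the analogous elements in which, moreover, every~$S_j$ is a relation from~$\calG$.

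Next I would exploit the fact that the differential~$d$ only omits wedge factors and never alters the underlying tree monomial~$T$. Consequently the complex $\calA_\calG$ splits as a direct sum $\calA_\calG=\bigoplus_T\calA_\calG^T$ of subcomplexes indexed by tree monomials $T\in\calF_\calM$, where $\calA_\calG^T$ is spanned by all $T\otimes S_1\wedge\ldots\wedge S_q$ with each $S_j$ a relation-divisor of~$T$. If $T$ is not divisible by any relation, then $\calA_\calG^T$ is concentrated in homological degree~$0$ and equals $\k\cdot(T\otimes 1)$; the direct sum of all these degree-zero pieces recovers precisely the monomial basis of $\calO=\calF_\calM/(\calG)$, namely the tree monomials not divisible by any relation. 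So it remains to prove that $\calA_\calG^T$ is acyclic whenever $T$ is divisible by at least one relation.

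For the acyclicity, let $k\ge1$ be the number of tree divisors of~$T$ that are relations of~$\calO$. Since a basis element of $\calA_\calG^T$ is determined by the subset of these $k$ relation-divisors that it records, and $d$ is the alternating-sum map deleting one element of the subset at a time (with the appropriate Koszul signs), $\calA_\calG^T$ is canonically isomorphic to the inclusion--exclusion complex~\eqref{InclExcl} for the set~$[k]$. This is the reduced chain complex of a $(k-1)$-simplex, hence acyclic for every $k>0$, exactly as in the algebra case. This finishes the computation of the homology and shows $H_\ldot(\calA_\calG,d)\simeq\calO$.

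I expect the genuinely new point, as opposed to the purely combinatorial inclusion--exclusion core which is identical to the algebra argument, to be the first step: verifying the freeness statement in the shuffle operad category. One must check that every element of $\calA$ (resp.\ $\calA_\calG$) is obtained by iterated shuffle composition from the indecomposable generators \emph{uniquely}, i.e.\ that cutting a tree along the internal edges left uncovered by a decomposable divisor-collection realises that element as a free composite, with the wedge of divisors respecting~$\imath$ and the signs. Everything downstream transfers verbatim from Theorem~\ref{incl-excl}.
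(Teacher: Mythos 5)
Your proposal is correct and takes essentially the same approach as the paper: the paper's own proof of this theorem consists of the single remark that it is ``proved analogously to its counterpart for associative algebras, Theorem~\ref{incl-excl}'', and your argument (indecomposable collections as those covering every internal edge, the splitting $\calA_\calG=\bigoplus_T\calA_\calG^T$ since $d$ never alters the underlying tree monomial, and identification of each nontrivial summand with the acyclic inclusion--exclusion complex~\eqref{InclExcl}) is exactly that analogy carried out in detail. Your closing observation correctly isolates the only genuinely operadic ingredient, namely that cutting a tree along an uncovered internal edge exhibits a decomposable element as a shuffle composition, which is what makes the freeness statement transfer from the algebra case.
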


\subsection{Right module resolution for monomial operads}\label{right_mod_oper}

Similarly to how it is done in Section~\ref{right_mod_alg}, it is possible to prove the following 

\begin{theorem}
Let $\calO$ be a monomial operad, and let us denote by $\calV_q$ the collection of the generators of the free resolution from the previous
section of homological degree~$q$. Then there exists an exact sequence of collections
\begin{equation}\label{MonomResOper}
\ldots\to \calV_q\circ\calO\to\calV_{q-1}\circ\calO\to\ldots\to\calV_1\circ\calO\to\calV_0\circ\calO\to\calO\to\k\to0. 
\end{equation}
\end{theorem}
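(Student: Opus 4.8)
The plan is to follow the recipe of Section~\ref{right_mod_alg} verbatim, replacing the linear decomposition $x_I=x_Jx_K$ of a monomial by the corresponding ``horizontal cut'' of a tree monomial. Since the boundary maps in~\eqref{MonomResOper} must be morphisms of right $\calO$-modules, it suffices to prescribe them on the generating collections $\calV_q$, and I would read them off from the operadic differential $d$ of $\calA_\calG$ exactly as in the algebra case. Given a generator $T\otimes S_1\wedge\ldots\wedge S_q$, with the $S_i$ an indecomposable collection of relation-subtrees of $T$, I consider each omission of $S_j$. If the collection $S_1,\ldots,\widehat{S_j},\ldots,S_q$ still covers every internal edge of $T$, the summand stays a generator and lands in $\calV_{q-1}\subseteq\calV_{q-1}\circ\calO$; if it becomes decomposable, I cut $T$ along the internal edges that are now uncovered, keep as the $\calV_{q-1}$-factor the top subtree $T'$ containing the root together with all the surviving $S_i$, and graft the subtrees hanging below the cut as the composition factor from $\calO$ (a summand that is understood to vanish as soon as one of those subtrees is divisible by a relation, i.e.\ is not reduced). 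This produces an element of $\calV_{q-1}\circ\calO$, carrying the sign $(-1)^{j-1}$ as before.

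Conceptually this is nothing but the operadic twisting cochain construction alluded to at the end of Section~\ref{right_mod_alg}: the differential of a generator of $\calA_\calG$ is a sum of operadic compositions of generators, which equips $\calV=\bigoplus_q\calV_q$ with the structure of an $\infty$-cooperad, and~\eqref{MonomResOper} is precisely the associated one-sided, Koszul-type, twisted composite complex. Once the maps are in place, the two things to check are $d^2=0$ and exactness. Both can be obtained, as in Theorem~\ref{incl-excl}, by splitting the whole complex as a direct sum indexed by the underlying composite tree monomial $T\in\calF_\calM$: the cutting procedure never changes this composite, so each $T$ contributes a finite subcomplex. For a $T$ divisible by some relation this is again an inclusion--exclusion complex on the set of relation-subtrees that can occupy a top position with reduced tail, and is therefore acyclic, while the reduced $T$ assemble precisely into the augmentation $\calO\to\k$; exactness of~\eqref{MonomResOper} follows.

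Alternatively, and perhaps more cleanly, I would transport acyclicity directly from the already established resolution $(\calA_\calG,d)\to\calO$: the fundamental theorem for operadic twisting cochains turns the quasi-isomorphism $\calA_\calG\to\calO$ into the exactness of~\eqref{MonomResOper}, exactly as the algebra statement of Section~\ref{right_mod_alg} follows from Theorem~\ref{incl-excl}. This route avoids re-running the combinatorial acyclicity argument at the price of setting up the twisting cochain formalism in the operadic category.

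The step I expect to be the main obstacle is making the operadic cut precise and checking its compatibilities. In the algebra case an uncovered gap is a single point and the split $x_I=x_Jx_K$ is forced; for a tree one must cut simultaneously along a whole antichain of uncovered internal edges, verify that all surviving $S_i$ are then supported in one connected top subtree $T'$ that is an honest generator of $\calV_{q-1}$, that the pieces below are reduced tree monomials of $\calO$ grafted at the leaves of $T'$, and that this decomposition is unique. Keeping track of the leaf-relabelling dictated by the shuffle structure and of the Koszul signs while verifying $d^2=0$ is the only genuinely technical point; everything else is a routine transcription of Section~\ref{right_mod_alg}.
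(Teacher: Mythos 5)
Your construction of the boundary maps is exactly the one the paper intends (and, for algebras, the one spelled out in Section~\ref{right_mod_alg}): cut along the internal edges left uncovered after omitting $S_j$, keep the root piece as the $\calV_{q-1}$-factor, project the lower pieces to $\calO$, with the convention that non-reduced lower pieces kill the summand. Your ``route 2'' via twisting cochains is also precisely the viewpoint the paper endorses in its remark at the end of Section~\ref{right_mod_alg}, though as you present it the entire content of the theorem is outsourced to an unproved ``fundamental theorem'' for operadic twisting cochains. The genuine problem is in your primary, self-contained argument (``route 1''): the claim that the fixed-composite subcomplex ``is again an inclusion--exclusion complex \ldots and is therefore acyclic'' is false, and this is exactly the point where the right-module resolution differs from Theorem~\ref{incl-excl}. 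In $\calA_\calG^T$ \emph{every} subset of the relation divisors of $T$ gives a basis element, which is why that complex is literally the inclusion--exclusion complex of a finite set. In the complex~\eqref{MonomResOper}, only those subsets occur which form an indecomposable covering of a root subtree \emph{and} have all lower pieces reduced. This family of subsets is closed upwards (adding any divisor to a valid configuration keeps it valid, because every relation divisor must meet the covered subtree, its complement being reduced), but it is in general \emph{not} a Boolean interval: it can have several incomparable minimal elements, and then it is not a power-set complex at all.

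A concrete counterexample already occurs for algebras, i.e.\ for operads concentrated in arity~$1$: take generators $a_1,\ldots,a_9$ and monomial relations $a_1a_2a_3$, $a_2a_3a_4a_5$, $a_4a_5a_6a_7a_8$, $a_3a_4a_5a_6a_7$, $a_6a_7a_8a_9$ (pairwise non-divisible), and the composite monomial $a_1a_2\cdots a_9$. The valid configurations have two distinct minimal elements, namely $\{a_1a_2a_3,\,a_3\cdots a_7\}$ and $\{a_1a_2a_3,\,a_2\cdots a_5,\,a_4\cdots a_8\}$, and the fixed-composite subcomplex has dimensions $1,4,4,1$ in homological degrees $5,4,3,2$ --- not the binomial shape of any inclusion--exclusion complex, so your identification cannot be made. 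Moreover, upward-closedness alone does not rescue the argument: the abstract up-closed family $\{1\},\{2\},\{1,2\}$ has one-dimensional homology, so acyclicity of these subcomplexes genuinely uses the geometry of tree divisors and requires a real proof --- for instance by comparing with the complementary \emph{down}-closed family (a simplicial complex whose minimal non-faces are the minimal valid configurations) and proving that complex contractible, or by carrying out the homological transport from $\calA_\calG\to\calO$ that your route~2 gestures at. As it stands, route~1 proves nothing beyond the cases where the valid configurations happen to form an interval, and route~2 is a citation rather than a proof; one of the two has to be made precise for the argument to be complete.
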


Let us describe some low degree maps of the resolution for the monomial version of the anti-associative operad. We use the following notation for the low degree basis elements: $\alpha\in\calV_0(2)$ is the element corresponding to $f$, $\beta\in\calV_1(3)$ and $\gamma\in\calV_1(4)$ are the elements corresponding to $f(f(\textrm{-},\textrm{-}),\textrm{-})$ and $f(\textrm{-},f(\textrm{-},f(\textrm{-},\textrm{-})))$ respectively, and $\omega\in\calV_2(4)$ corresponds to the small common multiple we discussed earlier (the overlap of two copies of $f(f(\textrm{-},\textrm{-}),\textrm{-})$); there are other elements in $\calV_2$, but we shall use only this one in our example. 

The following proposition is straightforward; we encourage our readers to perform the computations themselves to get familiar with our approach.

\begin{proposition}\label{AntiAssocMonomialResol}
We have 
\begin{gather*}
d_0(\alpha(\textrm{-},\textrm{-}))=f(\textrm{-},\textrm{-}),\\
h_0(f(\textrm{-},\textrm{-}))=\alpha(\textrm{-},\textrm{-}),\\
d_1(\beta(\textrm{-},\textrm{-},\textrm{-}))=\alpha(f(\textrm{-},\textrm{-}),\textrm{-}),\\
d_1(\gamma(\textrm{-},\textrm{-},\textrm{-},\textrm{-}))=\alpha(\textrm{-},g(\textrm{-},\textrm{-},\textrm{-})),\\
h_1(\alpha(\textrm{-},g(\textrm{-},\textrm{-},\textrm{-})))=\gamma(\textrm{-},\textrm{-},\textrm{-},\textrm{-}),\\
h_1(\alpha(f(\textrm{-},\textrm{-}),f(\textrm{-},\textrm{-})))=\beta(\textrm{-},\textrm{-},f(\textrm{-},\textrm{-})),\\
h_1(\alpha(g(\textrm{-},\textrm{-},\textrm{-}),\textrm{-}))=\beta(\textrm{-},f(\textrm{-},\textrm{-}),\textrm{-}),\\
d_2(\omega(\textrm{-},\textrm{-},\textrm{-},\textrm{-}))=\beta(f(\textrm{-},\textrm{-}),\textrm{-},\textrm{-}). 
\end{gather*}
\end{proposition}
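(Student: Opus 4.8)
The plan is to unwind the explicit description of the right module resolution~\eqref{MonomResOper} from Sections~\ref{oper_monom} and~\ref{right_mod_oper}, specialised to the monomial anti-associative operad, whose underlying operad $\calO$ has the three-element monomial basis $\{\id,f,g\}$ with $g=f(\textrm{-},f(\textrm{-},\textrm{-}))$. Recall that the generators $\calV_q$ are the tree monomials $T\otimes S_1\wedge\ldots\wedge S_q$ carrying an indecomposable covering by relation-divisors, and that each $d_q$ is computed exactly as in the algebra case (Section~\ref{right_mod_alg}): one applies the wedge-omitting differential $d$ of $\calA_\calG$ and, for each omission that renders the remaining covering decomposable, keeps the summand only when the surviving divisors form an indecomposable covering of the root-containing part $T_J$ of a tree decomposition $T=T_J\circ(\ldots)$, recording the lower piece in the $\circ\,\calO$ factor. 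The homotopy values come from the recursive construction of the contracting homotopy: given a normal-form element of $\calV_q\circ\calO$, one locates the highest occurrence of a leading monomial, wedges in the corresponding divisor, and iterates on the resulting lower element.

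First I would pin down the relevant generators. In homological degree $0$ the only generator is the corolla $f$, which we call $\alpha$; here $d_0(\alpha)=f$ is simply the canonical projection, and $h_0(f)=\alpha$ because $f$ is a normal-form monomial of arity~$2$, hence divisible by none of the relations (which have arity $3$ and $4$), so the recursion terminates at once. In degree $1$ the generators are precisely the two leading monomials of the Gröbner basis, namely $\beta=f(f(\textrm{-},\textrm{-}),\textrm{-})$ and $\gamma=f(\textrm{-},f(\textrm{-},f(\textrm{-},\textrm{-})))$, each carrying its single relation-divisor. In degree $2$ the generator of interest is the self-overlap $\omega$ supported on $f(f(f(\textrm{-},\textrm{-}),\textrm{-}),\textrm{-})$, with the two nested copies of $f(f(\textrm{-},\textrm{-}),\textrm{-})$ distinguished.

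Next I would carry out the boundary and homotopy computations. For $\beta$ and $\gamma$, omitting the unique divisor leaves the empty covering, which is indecomposable only on a single corolla; the forced decomposition peels off the top corolla $\alpha$, leaving $f$, respectively $g$, in the module factor, giving $d_1(\beta)=\alpha\circ_1 f$ and $d_1(\gamma)=\alpha\circ_2 g$. For $\omega$ one has $d(T\otimes S_1\wedge S_2)=T\otimes S_2-T\otimes S_1$; the copy of the relation that does not contain the root cannot cover any root-containing $T_J$, so that summand drops, while the copy containing the root is itself the relation $\beta$ sitting on top of a residual $f$, yielding $d_2(\omega)=\beta\circ_1 f$. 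Dually, for the three arity-$4$ normal-form elements of $\calV_0\circ\calO$ the recursive homotopy lifts $\alpha(\textrm{-},g(\textrm{-},\textrm{-},\textrm{-}))$ to $\gamma$ (the whole tree is the cubic relation), $\alpha(f(\textrm{-},\textrm{-}),f(\textrm{-},\textrm{-}))$ to $\beta\circ_3 f$, and $\alpha(g(\textrm{-},\textrm{-},\textrm{-}),\textrm{-})$ to $\beta\circ_2 f$, each after a single reduction step.

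The genuinely routine but error-prone part, which I expect to be the main obstacle, is the bookkeeping rather than any conceptual difficulty: tracking the Koszul and $(-1)^{j-1}$ signs in the differential, and, more delicately, the shuffle-operad combinatorics — working with canonical planar representatives, relabelling leaves correctly when passing to subtree monomials and when forming the compositions $\circ_i$, and checking in each case which omitted-divisor summand genuinely survives under the root-containing indecomposability condition. Once these conventions are fixed consistently (matching the ordering of divisors by starting point used earlier, which also fixes the sign in $d_2(\omega)$), all eight identities reduce to direct inspection of the arity-$\le 4$ part of $\calO$.
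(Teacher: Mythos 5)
Your proposal is correct and takes exactly the approach the paper intends: the paper gives no written proof (it declares the proposition straightforward and leaves the computation to the reader), and your identification of the generators $\alpha,\beta,\gamma,\omega$, the divisor-omission differential with the root-containing indecomposability condition, and the divisor-wedging homotopy reproduces all eight formulas on the arity-$\le 4$ part of the monomial operad. The one residual point, the sign in $d_2(\omega)$, is indeed only a matter of the chosen ordering of the two overlapping divisors (equivalently, of the sign convention for the generator $\omega$ itself), as you note.
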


\subsection{Homology classes}\label{oper_minimal}

Also, one can obtain representatives for homology classes in exactly the same way as for associative algebras. Let us choose a tree monomial~$T$, and work with the inclusion-exclusion complex $\calA_\calG(T):=\bigoplus_q \calA_\calG(T,q)$. We fix some ordering of the set of all relation divisors of~$T$; let those relations be $S_1<S_2<\ldots<S_m$. On the acyclic complex $\calA_\calG(T)$ we have $m$ anticommuting differentials $\partial_1,\ldots,\partial_m$ where $\partial_p=\frac{\partial \phantom{S_p}}{\partial S_p}$, and the corresponding contracting homotopies $\imath_p(\cdot)=S_p\wedge\cdot$. Let us denote $\partial_{(p)}=\partial_1+\ldots+\partial_p$, so that the differential $d$ is nothing but~$\partial_{(m)}$. 

The following theorem is proved analogously to its counterpart for associative algebras, Theorem~\ref{min-resol} above.

\begin{theorem}\label{min-resol-oper}
The homology $H(\calA_\calG(T)_{ab},d)$ has for representatives of all classes monomials $M=T\otimes S_{i_1}\wedge\ldots\wedge S_{i_q}$ that satisfy the following properties:
\begin{itemize}
 \item[(i)] for all $j$, the monomial $\partial_j(M)$ is either decomposable or equal to~$0$.
 \item[(ii)] for each $q'$ such that $\partial_{q'}(M)=0$, there exists $q<q'$ for which $\imath_{q'}\partial_q (M)\ne0$ in $\calA_\calG(T)_{ab}$ (i.e., this monomial is indecomposable). 
\end{itemize} 
\end{theorem}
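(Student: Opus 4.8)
The plan is to mimic the proof of Theorem~\ref{min-resol} essentially verbatim, the only genuinely new ingredient being a check that the combinatorial operators involved behave in the tree setting as they do in the associative case once intervals inside a word are replaced by subtrees inside the tree monomial~$T$. Having fixed $T$ and the ordering $S_1<\ldots<S_m$ of its relation divisors, I would first record the structure needed on the abelianised complex $\calA_\calG(T)_{ab}$: the operators $\partial_p=\tfrac{\partial}{\partial S_p}$ (delete the factor $S_p$ when present, with the Koszul sign) pairwise anticommute, and $\imath_p(\cdot)=S_p\wedge\cdot$ is a contracting homotopy for $\partial_p$ on the part of the complex where $S_p$ may be adjoined, so that $\partial_p\imath_p+\imath_p\partial_p=\Id$ there, while $\partial_q\imath_{q'}+\imath_{q'}\partial_q=0$ for $q\ne q'$. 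These identities are purely formal and hold for the same reason as for algebras; the point hidden in passing to the abelianisation is that a wedge $S_{i_1}\wedge\ldots\wedge S_{i_q}$ survives in $\calA_\calG(T)_{ab}$ precisely when the collection of subtrees $S_{i_1},\ldots,S_{i_q}$ is \emph{indecomposable}, i.e.\ covers every internal edge of~$T$, which is exactly the tree analogue of ``covering every product $x_{i_k}x_{i_{k+1}}$''.

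The heart of the argument is the identity
$H(\calA_\calG(T)_{ab},\partial_{(p)})=H(\ldots(H(\calA_\calG(T)_{ab},\partial_1),\ldots),\partial_p)$
expressing the total homology for $\partial_{(p)}$ as iterated homology, which I would prove by induction on~$p$ exactly as in Section~\ref{ass_minimal}. At the inductive step one filters the bicomplex carrying the two anticommuting differentials $\partial_{(p)}$ and $\partial_{p+1}$; because on each acyclic summand a partial inverse $\partial_{(p)}^{-1}$ is furnished by the homotopies $\imath_1,\ldots,\imath_p$, and this partial inverse commutes with $\partial_{p+1}$ (the latter deletes a factor of strictly larger index, which anticommutes with every $\imath_q$ used to build $\partial_{(p)}^{-1}$), the associated spectral sequence degenerates at~$E_1$. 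Running the induction then yields, again as in the associative case, the intermediate description: for every $p\le m$ the homology $H(\calA_\calG(T)_{ab},\partial_{(p)})$ is represented by those monomials $M=T\otimes S_{i_1}\wedge\ldots\wedge S_{i_q}$ for which $\partial_j(M)$ is decomposable or zero for all $j\le p$, and for which, whenever $\partial_{q'}(M)=0$ with $q'\le p$, there is some $q<q'$ with $\imath_{q'}\partial_q(M)\ne0$ in $\calA_\calG(T)_{ab}$.

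Specialising to $p=m$, where $\partial_{(m)}=d$, gives precisely the stated description of $H(\calA_\calG(T)_{ab},d)$; summing over all tree monomials~$T$ then recovers the full homology.

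I expect the only genuine obstacle to be the verification that $\partial_{(p)}^{-1}$ commutes with $\partial_{p+1}$ in the tree setting. For words this is transparent, since deleting a larger-index interval and reducing along smaller-index intervals act on disjoint portions of the monomial; for trees the subtrees $S_p$ may share vertices and nest in more intricate ways, so one must argue that the combinatorics of overlapping tree divisors still allows the homotopies $\imath_1,\ldots,\imath_p$ to be applied independently of $\partial_{p+1}$. This reduces, however, to the elementary anticommutation $\partial_q\imath_{q'}+\imath_{q'}\partial_q=0$ for $q\ne q'$, which holds at the level of wedge products of subtrees irrespective of how the subtrees overlap; once this is in hand the degeneration argument is formal and the remainder of the proof is identical to that of Theorem~\ref{min-resol}.
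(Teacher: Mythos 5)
Your proposal is correct and takes essentially the same route as the paper: the paper's entire proof is the remark that the theorem "is proved analogously to Theorem~\ref{min-resol}," i.e.\ by fixing an ordering of the relation divisors of~$T$, introducing the anticommuting partial differentials $\partial_p$ with homotopies $\imath_p$, and running the iterated-homology/spectral-sequence degeneration argument of Section~\ref{ass_minimal}, which is exactly what you reproduce. Your closing point---that the only tree-specific issue is checking that overlapping subtree divisors still satisfy the formal (anti)commutation relations that make $\partial_{(p)}^{-1}$ commute with $\partial_{p+1}$---is precisely the content hidden in the paper's word ``analogously.''
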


Unlike the situation for monomials in associative algebras, there is no natural ordering of divisors for a tree monomial in the free shuffle operad  (since ``trees grow in several different directions''), so there is no description of representatives in a manner as elegant as it is in the case of Anick resolution. However, in some cases it is possible to make use of it. We shall discuss some applications below, when dealing with particular examples.

Using results of this section, we can compute representatives for low degree homology classes of the monomial version of the anti-associative operad. Our results on dimensions of components for the corresponding minimal resolution $\calR$ are summarised in the following 
\begin{proposition}
We have
\begin{gather*}
\dim\calR(2)_0=1,\\
\dim\calR(3)_1=1,\\
\dim\calR(4)_1=\dim\calR(4)_2=1,\\
\dim\calR(5)_2=5, \dim\calR(5)_3=1,\\
\dim\calR(6)_3=15, \dim\calR(6)_4=1,\\
\dim\calR(7)_3=4, \dim\calR(7)_4=35, \dim\calR(7)_5=1.
\end{gather*} 
\end{proposition}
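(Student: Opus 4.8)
The plan is to read the generators of the minimal resolution $\calR$ straight off Theorem~\ref{min-resol-oper}: a degree-$q$, arity-$n$ generator is a homology class $T\otimes S_{i_1}\wedge\ldots\wedge S_{i_q}$, where $T$ is a tree monomial of the free operad on one binary generator $f$ with $n$ leaves (equivalently, a planar binary tree with $n-1$ internal vertices), the $S_{i_j}$ are relation-divisors of $T$, and conditions (i)--(ii) hold. For the monomial version of $\widetilde{\As}$ the relations are $r_1=f\circ_1 f$ (arity $3$) and $r_2=f\circ_2(f\circ_2 f)$ (arity $4$); an occurrence of $r_1$ in $T$ is a pair of internal vertices, one being the left child of the other, while an occurrence of $r_2$ is a chain of three internal vertices, each the right child of the previous one. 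Thus every generator is an \emph{operadic chain}: a binary tree equipped with a distinguished family of left-descending pairs and right-descending triples. Condition (i) forces these divisors to be linked only through genuine overlaps (neighbouring relations must share a vertex), and condition (ii) is the minimality requirement discarding non-reduced configurations. The whole computation therefore reduces to a finite enumeration of such chains for $n\le 7$.

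First I would record the two extreme families present in every arity. The left comb with $n$ leaves carries the $n-2$ nested occurrences of $r_1$, which form the unique chain of maximal length; this accounts for all the entries $\dim\calR(n)_{n-2}=1$. Dually, the right comb with $n$ leaves carries $n-3$ overlapping occurrences of $r_2$, contributing a chain of length $n-3$. With these in hand I would treat the arity-$5$ case as the template for mixed overlaps: a length-$2$ chain on $4$ internal vertices arises either from one $r_1$ and one $r_2$ sharing a single vertex, or from two $r_2$'s sharing a right-descending pair. A short case analysis of which vertex of $r_2$ may be identified with a vertex of $r_1$ (the shared vertex can be the top, middle, or bottom of $r_2$, or the lower vertex of $r_1$ may be the top of $r_2$, the remaining identifications being impossible since a vertex has a unique parent) yields exactly four mixed chains, and the two overlapping $r_2$'s give the fifth; hence $\dim\calR(5)_2=5$, while the only length-$3$ chain is the left comb, giving $\dim\calR(5)_3=1$.

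For $n=6,7$ I would run the same bookkeeping, organising the count by the ordered list of relation types appearing along the chain (an $r_1$ of ``size'' $2$, an $r_2$ of ``size'' $3$) and by the sizes of the consecutive overlaps, subject to the constraint that the internal vertices total $n-1$. For each admissible pattern one determines the underlying tree up to the finitely many ways of realising the prescribed overlaps, and then tests conditions (i)--(ii) to retain only the reduced chains. This is what produces $\dim\calR(6)_3=15$, $\dim\calR(6)_4=1$, and $\dim\calR(7)_3=4$, $\dim\calR(7)_4=35$, $\dim\calR(7)_5=1$.

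The main obstacle is not any single step but the reliability of the enumeration as $n$ grows: the number of planar binary trees is Catalan ($14$, $42$, $132$ for $n=5,6,7$), the occurrences of $r_1$ and $r_2$ interact in several ways, and condition (ii) must be checked carefully so as not to count a chain that already has a proper ``beginning'' which is itself a chain. To keep this under control I would encode a chain recursively, by how it is obtained from a shorter chain by grafting one further relation onto its last overlap, turning the count into a transfer-matrix computation in the two relation-types. As an independent check I would use the consistency relations imposed by exactness of the free-module resolution~(\ref{MonomResOper}): taking arity-$n$ dimensions and forming the alternating sum $\sum_q(-1)^q\dim(\calV_q\circ\calO)(n)$ must reproduce the Hilbert series of the monomial operad (which has $\dim\calO(n)=1$ for $n\le3$ and $\dim\calO(n)=0$ for $n\ge4$), and since $\calO$ is finite-dimensional this convolution gives explicit linear relations among the computed $\dim\calR(k)_q$ that the answers above should satisfy.
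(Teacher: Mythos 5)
Your overall strategy is the one the paper itself intends: the proposition is stated without proof precisely because it is the finite enumeration supplied by Theorem~\ref{min-resol-oper}, and your arity-$5$ analysis (four mixed $r_1$--$r_2$ chains plus the right comb in degree $2$, the left comb in degree $3$) is correct. The genuine gap is your second ``extreme family''. You claim the right comb with $n$ leaves contributes a chain of length $n-3$ in every arity; this is true for $n=4,5$ but false for $n\ge 6$, and the reason is that your paraphrase of condition (i) --- ``neighbouring relations must share a vertex'' --- drops the essential converse requirement: \emph{every} single deletion $\partial_j(M)$ must be decomposable or zero, which forbids non-neighbouring divisors from overlapping. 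Concretely, for $n=6$ label the right comb's vertices $v_1,\dots,v_5$ and let $S_{123},S_{234},S_{345}$ be the three occurrences of $r_2$. The length-$3$ configuration $S_{123}\wedge S_{234}\wedge S_{345}$ fails condition (i), because deleting the \emph{middle} divisor leaves $S_{123}\wedge S_{345}$, which is still an indecomposable covering ($S_{123}$ and $S_{345}$ share $v_3$); and the length-$2$ configuration $S_{123}\wedge S_{345}$ fails condition (ii), since for the ordering $S_{123}<S_{234}<S_{345}$ the only admissible reinsertion $\imath_{234}\partial_{123}$ produces $S_{234}\wedge S_{345}$, which is decomposable. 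These two configurations cancel in homology, so the right comb contributes \emph{nothing} in arity $6$ --- the exact operadic analogue of the fact that $x^5$ is not an Anick chain over $\k[x]/(x^3)$. Likewise in arity $7$ the right comb contributes a single class in degree $3$, represented by $S_{123}\wedge S_{234}\wedge S_{456}$ (only neighbours overlap), and \emph{not} a class of degree $n-3=4$. This asymmetry between left and right combs is precisely what the paper warns about when it says, after Theorem~\ref{min-resol-oper}, that no description as elegant as Anick's chains exists for operads.

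Carried out as announced, your bookkeeping would therefore report $\dim\calR(6)_3=16$ and a spurious class in $\calR(7)_4$. To your credit, the consistency check in your last paragraph would catch this: with $u=f_\calO(t)=t+t^2+t^3$, exactness of~(\ref{MonomResOper}) gives, modulo $t^7$, the identity $t = u-u^2+\bigl(u^3+u^4\bigr)-\bigl(u^4+5u^5\bigr)+\bigl(u^5+Nu^6\bigr)-u^6$ with $N=\dim\calR(6)_3$, and comparing coefficients of $t^6$ yields $0=-1+7-20+(N-1)$, i.e.\ $N=15$ on the nose. So the repair is conceptual rather than computational: state condition (i) in full strength (every deletion decomposable or zero, hence no overlaps between non-adjacent relations), use condition (ii) only as the tie-breaker among the surviving configurations on the same tree, and delete the claim that right combs mirror left combs; your transfer-matrix organisation and the Euler-characteristic check are then sound ways to produce and verify the stated numbers.
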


\subsection{Resolutions for general relations}\label{AntiAsExample}

Let $\widetilde{\calO}=\calF_\calM/(\widetilde{\calG})$ be an operad, and let $\calO=\calF_\calM/(\calG)$ be its monomial version, that is, $\widetilde{\calG}$ is a Gr\"obner basis of relations, and $\calG$ consists of all leading monomials of~$\widetilde{\calG}$. In Section~\ref{oper_monom}, we defined a free resolution $(\calA_\calG,d)$ for $\calO$, so that $H_\ldot(\calA_\calG,d)\simeq\calO$. Let $\pi$ (resp., $\widetilde{\pi}$) be the canonical homomorphism from $\calA_\calG$ to~$\calO$ that kills all generators of positive homological degree, and on elements of homological degree~$0$ is the canonical projection from $\calF_\calM$ to its quotient. Denote  by~$h$ the contracting homotopy for this resolution, so that $\left.(dh)\right|_{\ker d}= \Id-\pi$.

Similarly to how it is proved in Section~\ref{general}, we obtain the following

\begin{theorem}
There exists a ``deformed'' differential $D$ on~$\calA_\calG$ and a homotopy $H\colon\ker D\rightarrow\calA_\calG$ such that 
$H_\ldot(\calA_\calG,D)\simeq\widetilde{\calO}$, and $\left.(DH)\right|_{\ker D} = \Id-\widetilde{\pi}$.
\end{theorem}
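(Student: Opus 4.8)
The plan is to follow the proof of the corresponding statement for associative algebras in Section~\ref{general} almost verbatim, replacing words by tree monomials and the free algebra by the free shuffle operad $\calF_\calM$. First I would fix the path-lexicographic ordering on tree monomials, which (for a collection $\calM$ of finite sets) makes the set of tree monomials well-ordered, and lift it to a partial ordering on $\calA_\calG$ by declaring $T\otimes S_1\wedge\ldots\wedge S_q$ to be smaller than $T'\otimes S'_1\wedge\ldots\wedge S'_{q'}$ whenever $T<T'$. As in the algebra case, for $u\in\calA_\calG$ the leading term $\hat{u}$ keeps only those basis elements whose underlying tree monomial is maximal. The well-orderedness guarantees that the inductive definitions below terminate and produce well-defined operators, exactly as demanded by the first bullet of the remark following the algebra theorem.

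Next I would construct $D$ and $H$ simultaneously by induction on homological degree, defining the pair $(D_{k+1},H_k)$ from the previously constructed pairs, and maintaining throughout the invariants $D(x)=d(\hat{x})+\text{lower terms}$ and $H(x)=h(\hat{x})+\text{lower terms}$. Since $D$ is a derivation of the dg-operad, it suffices to specify it on generators of $\calA_\calG$: for a degree-one generator $T\otimes S$ with $T=\lt(g)$ the leading monomial of a relation $g\in\widetilde{\calG}$, I set $D_1(T\otimes S)=\frac{1}{c_g}g$, and for $k\ge1$ I set $D_{k+1}(x)=d_{k+1}(x)-H_{k-1}D_kd_{k+1}(x)$. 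The homotopy $H_0$ is built by a secondary induction decreasing the tree monomial: if $T$ is not divisible by any leading term I put $H_0(T)=0$, otherwise $H_0(T)=h_0(T)+H_0(T-D_1h_0(T))$, using a fixed relation divisor $S_1$ so that $h_0(T)=T\otimes S_1$; in higher degrees $H_k(u)=h_k(\hat{u})+H_k(u-D_{k+1}h_k(\hat{u}))$ for $u\in\ker D_k$, again descending in the leading term.

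Finally I would verify, by a simultaneous induction identical in shape to the algebra case, that $D_kD_{k+1}=0$ and that $\left.(D_{k+1}H_k)\right|_{\ker D_k}=\Id-\widetilde{\pi}$. The base of this induction reduces, just as before, to proving $D_1H_0(T)=T-\overline{T}$ for every tree monomial $T$, where $\overline{T}$ is the normal form of $T$ modulo the Gr\"obner basis $\widetilde{\calG}$; this is where the operadic reduction theory of~\cite{DK} enters, since $D_1h_0(T)=\frac{1}{c_g}m_{T,\lt(g)}(T)=T-r_g(T)$ must be the result of substituting the relation $g$ into $T$ at the tree location prescribed by $S_1$, and the independence of the normal form from the chosen sequence of reductions (the operadic analogue of the diamond lemma) is exactly what makes the telescoping argument close.

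The main obstacle is genuinely this last point: one must check that the operadic substitution $m_{T,\lt(g)}$ and the associated residue $r_g$ behave exactly like their associative counterparts, so that the identity $D_1h_0(T)=T-r_g(T)$ and the confluence of reductions hold for tree monomials. Everything else is formal bookkeeping that transfers without change once the ordering is well-founded; but the combinatorics of \emph{which} subtree a divisor $S$ selects, and of replacing a leading tree monomial by the lower terms of the corresponding element of $\widetilde{\calG}$, is more intricate than concatenation of words and has to be controlled using the tree-monomial divisibility and Gr\"obner machinery recalled earlier in this section.
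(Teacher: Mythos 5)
Your proposal is correct and coincides with the paper's own proof, which literally reads ``Similarly to how it is proved in Section~\ref{general}, we obtain the following'' --- i.e., the intended argument is exactly the verbatim transfer of the algebra construction to tree monomials that you spell out, with the operadic reduction machinery of~\cite{DK} supplying the diamond-lemma step you correctly flag as the only non-formal ingredient.
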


Let us compute some low degree maps of the resolution for the anti-associative operad. We shall cheat a little bit and deform the right module resolution, not the dg-operad resolution, as the former is smaller and so the amount of computations is not excessive. Results of Proposition~\ref{AntiAssocMonomialResol} can be used to compute the deformed maps as follows.

\begin{proposition}
We have
\begin{gather*}
D_0(\alpha(\textrm{-},\textrm{-}))=f(\textrm{-},\textrm{-}), \\
H_0(f(\textrm{-},\textrm{-}))=\alpha(\textrm{-},\textrm{-}),\\
D_1(\beta(\textrm{-},\textrm{-},\textrm{-}))=\alpha(f(\textrm{-},\textrm{-}),\textrm{-})+\alpha(\textrm{-},f(\textrm{-},\textrm{-})),\\
D_1(\gamma(\textrm{-},\textrm{-},\textrm{-},\textrm{-}))=\alpha(\textrm{-},g(\textrm{-},\textrm{-},\textrm{-})),\\
H_1(\alpha(\textrm{-},g(\textrm{-},\textrm{-},\textrm{-})))=\gamma(\textrm{-},\textrm{-},\textrm{-},\textrm{-}),\\
H_1(\alpha(f(\textrm{-},\textrm{-}),f(\textrm{-},\textrm{-})))=\beta(\textrm{-},\textrm{-},f(\textrm{-},\textrm{-}))-\gamma(\textrm{-},\textrm{-},\textrm{-},\textrm{-}),\\
H_1(\alpha(g(\textrm{-},\textrm{-},\textrm{-}),\textrm{-}))=\beta(\textrm{-},f(\textrm{-},\textrm{-}),\textrm{-})+\gamma(\textrm{-},\textrm{-},\textrm{-},\textrm{-}),\\
D_2(\omega(\textrm{-},\textrm{-},\textrm{-},\textrm{-}))=\beta(f(\textrm{-},\textrm{-}),\textrm{-},\textrm{-})+\beta(\textrm{-},f(\textrm{-},\textrm{-}),\textrm{-})-\beta(\textrm{-},\textrm{-},f(\textrm{-},\textrm{-}))+2\gamma(\textrm{-},\textrm{-},\textrm{-},\textrm{-}).
\end{gather*}
\end{proposition}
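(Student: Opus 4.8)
The plan is to run the inductive construction of the operadic deformation theorem just proved (the analogue of the theorem in Section~\ref{general}) on the \emph{right module} resolution $\calV_\ldot\circ\calO$, which we deform rather than the bulkier dg-operad resolution. The inputs are the monomial maps of Proposition~\ref{AntiAssocMonomialResol} and the full Gr\"obner basis, namely $r_1=f(f(\textrm{-},\textrm{-}),\textrm{-})+f(\textrm{-},f(\textrm{-},\textrm{-}))$ and the monomial relation $r_2=f(\textrm{-},f(\textrm{-},f(\textrm{-},\textrm{-})))$. In the indexing of the proposition the two recurrences read $D_i(x)=d_i(x)-H_{i-1}D_{i-1}d_i(x)$ on generators and $H_i(u)=h_i(\hat{u})+H_i(u-D_i h_i(\hat{u}))$, and the one extra ingredient is that after each grafting one rewrites every intermediate tree monomial whose $\calO$-part is not in normal form through the Gr\"obner basis; concretely $f(f(\textrm{-},\textrm{-}),\textrm{-})$ is replaced by $-f(\textrm{-},f(\textrm{-},\textrm{-}))=-g$.

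First I would settle homological degree $0$. The generator $\alpha$ corresponds to the operad generator $f$, not to a leading monomial of a relation, so $D_0(\alpha)=f$ is undeformed; and $f$ is divisible by no leading term (these sit in arities $3$ and $4$), so the correction in $H_0(f)=h_0(f)+H_0(f-D_0 h_0(f))$ vanishes and $H_0(f)=\alpha$. In degree $1$ the generators $\beta$ and $\gamma$ correspond to the leading monomials of $r_1$ and $r_2$, and the base case of the construction returns the whole normalised relation: thus $D_1(\beta)=\alpha(f(\textrm{-},\textrm{-}),\textrm{-})+\alpha(\textrm{-},f(\textrm{-},\textrm{-}))$, the leading term together with the lower term of $r_1$, while $D_1(\gamma)=\alpha(\textrm{-},g(\textrm{-},\textrm{-},\textrm{-}))$ since $r_2$ is already a monomial.

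With $D_1$ available I would compute $H_1$ from the homotopy recurrence. For $\alpha(\textrm{-},g)$ the monomial value already satisfies $\alpha(\textrm{-},g)-D_1(\gamma)=0$, so $H_1(\alpha(\textrm{-},g))=\gamma$ is unchanged. For $\alpha(f(\textrm{-},\textrm{-}),f(\textrm{-},\textrm{-}))$ and $\alpha(g(\textrm{-},\textrm{-},\textrm{-}),\textrm{-})$ the leading values $\beta(\textrm{-},\textrm{-},f(\textrm{-},\textrm{-}))$ and $\beta(\textrm{-},f(\textrm{-},\textrm{-}),\textrm{-})$ get corrected: applying $D_1$ and grafting produces, beyond the expected leading term, an extra $\alpha(\textrm{-},g)$ and $-\alpha(\textrm{-},g)$ respectively --- the contribution of the lower term $\alpha(\textrm{-},f)$ of $D_1(\beta)$ once non-normal monomials are rewritten through $r_1$ --- and feeding this back adds $-\gamma$ and $+\gamma$. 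Finally $D_2(\omega)=d_2(\omega)-H_1 D_1 d_2(\omega)$ with $d_2(\omega)=\beta(f(\textrm{-},\textrm{-}),\textrm{-},\textrm{-})$: here $D_1 d_2(\omega)=\alpha(f(f(\textrm{-},\textrm{-}),\textrm{-}),\textrm{-})+\alpha(f(\textrm{-},\textrm{-}),f(\textrm{-},\textrm{-}))$, rewriting $f(f(\textrm{-},\textrm{-}),\textrm{-})=-g$ turns the first summand into $-\alpha(g(\textrm{-},\textrm{-},\textrm{-}),\textrm{-})$, and applying the $H_1$ just found yields the three $\beta$-terms together with $2\gamma$.

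I expect the main obstacle to be precisely the bookkeeping of these reductions and signs. The correction terms $\pm\gamma$, and most visibly the coefficient $2$ in front of $\gamma$ in $D_2(\omega)$, are produced entirely by rewriting intermediate tree monomials such as $f(f(\textrm{-},\textrm{-}),\textrm{-})$ through the Gr\"obner basis, together with the Koszul signs coming from the ordering of the marked relation-divisors. Tracking which leaf each grafting lands on, and the sign and reduction each composite monomial acquires, is where any error would creep in; the inner recurrences terminate because $u-D_i h_i(\hat{u})$ has strictly smaller leading tree monomial than $u$, and only the few generators listed ever occur.
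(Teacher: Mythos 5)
Your proposal is correct and follows essentially the same route as the paper: deforming the right module resolution via the recurrences $D_i = d_i - H_{i-1}D_{i-1}d_i$ and $H_i(u) = h_i(\hat{u}) + H_i(u - D_i h_i(\hat{u}))$, feeding in the monomial maps of Proposition~\ref{AntiAssocMonomialResol}, and rewriting $f(f(\textrm{-},\textrm{-}),\textrm{-})$ to $-g$ through the Gr\"obner basis, which is exactly where the $\pm\gamma$ corrections and the coefficient $2$ arise. The only cosmetic difference is that for $D_1$ you invoke the base case of the general construction (the full normalised relation) where the paper runs the recurrence $d_1 - H_0D_0d_1$; these visibly agree.
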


\begin{proof}
Formulas for $D_0$ and $H_0$ are obvious. For $D_1$ and $H_1$, the computation goes as follows:
\begin{multline*}
D_1(\beta(\textrm{-},\textrm{-},\textrm{-}))=\alpha(f(\textrm{-},\textrm{-}),\textrm{-})-H_0D_0(\alpha(f(\textrm{-},\textrm{-}),\textrm{-}))=\\=
\alpha(f(\textrm{-},\textrm{-}),\textrm{-})-H_0(f(f(\textrm{-},\textrm{-}),\textrm{-}))=\alpha(f(\textrm{-},\textrm{-}),\textrm{-})+H_0(f(\textrm{-},f(\textrm{-},\textrm{-})))=\\=\alpha(f(\textrm{-},\textrm{-}),\textrm{-})+\alpha(\textrm{-},f(\textrm{-},\textrm{-})),
\end{multline*}
 $$
D_1(\gamma(\textrm{-},\textrm{-},\textrm{-},\textrm{-}))=\alpha(\textrm{-},g(\textrm{-},\textrm{-},\textrm{-}))-H_0D_0(\alpha(\textrm{-},g(\textrm{-},\textrm{-},\textrm{-})))=\alpha(\textrm{-},g(\textrm{-},\textrm{-},\textrm{-})),
 $$
 $$ 
H_1(\alpha(\textrm{-},g(\textrm{-},\textrm{-},\textrm{-})))=\gamma(\textrm{-},\textrm{-},\textrm{-},\textrm{-})+H_1(\alpha(\textrm{-},g(\textrm{-},\textrm{-},\textrm{-}))-D_1\gamma(\textrm{-},\textrm{-},\textrm{-},\textrm{-}))=\gamma(\textrm{-},\textrm{-},\textrm{-},\textrm{-}),
 $$
\begin{multline*}
H_1(\alpha(f(\textrm{-},\textrm{-}),f(\textrm{-},\textrm{-})))=\\=\beta(\textrm{-},\textrm{-},f(\textrm{-},\textrm{-}))+H_1(\alpha(f(\textrm{-},\textrm{-}),f(\textrm{-},\textrm{-}))-D_1(\beta(\textrm{-},\textrm{-},f(\textrm{-},\textrm{-}))))=\\=
\beta(\textrm{-},\textrm{-},f(\textrm{-},\textrm{-}))-H_1(\alpha(\textrm{-},g(\textrm{-},\textrm{-},\textrm{-})))=\beta(\textrm{-},\textrm{-},f(\textrm{-},\textrm{-}))-\gamma(\textrm{-},\textrm{-},\textrm{-},\textrm{-}),
\end{multline*}
\begin{multline*}
H_1(\alpha(g(\textrm{-},\textrm{-},\textrm{-}),\textrm{-}))=\beta(\textrm{-},f(\textrm{-},\textrm{-}),\textrm{-})+H_1(\alpha(g(\textrm{-},\textrm{-},\textrm{-}),\textrm{-})-D_1(\beta(\textrm{-},f(\textrm{-},\textrm{-}),\textrm{-})))=\\=
\beta(\textrm{-},f(\textrm{-},\textrm{-}),\textrm{-})+H_1(-\alpha(\textrm{-},-g(\textrm{-},\textrm{-},\textrm{-})))=\beta(\textrm{-},f(\textrm{-},\textrm{-}),\textrm{-})+\gamma(\textrm{-},\textrm{-},\textrm{-},\textrm{-}).
\end{multline*}
For $D_2$, we may use the formulas we already obtained, getting
\begin{multline*}
D_2(\omega(\textrm{-},\textrm{-},\textrm{-},\textrm{-}))=\beta(f(\textrm{-},\textrm{-}),\textrm{-},\textrm{-})-H_1D_1(\beta(f(\textrm{-},\textrm{-}),\textrm{-},\textrm{-}))=\\=
\beta(f(\textrm{-},\textrm{-}),\textrm{-},\textrm{-})-H_1(\alpha(f(f(\textrm{-},\textrm{-}),\textrm{-}),\textrm{-})+\alpha(f(\textrm{-},\textrm{-}),f(\textrm{-},\textrm{-})))=\\=
\beta(f(\textrm{-},\textrm{-}),\textrm{-},\textrm{-})-H_1(-\alpha(g(\textrm{-},\textrm{-},\textrm{-}),\textrm{-})+\alpha(f(\textrm{-},\textrm{-}),f(\textrm{-},\textrm{-})))=\\=
\beta(f(\textrm{-},\textrm{-}),\textrm{-},\textrm{-})+(\beta(\textrm{-},f(\textrm{-},\textrm{-}),\textrm{-})+\gamma(\textrm{-},\textrm{-},\textrm{-},\textrm{-}))-(\beta(\textrm{-},\textrm{-},f(\textrm{-},\textrm{-}))-\gamma(\textrm{-},\textrm{-},\textrm{-},\textrm{-}))=\\=
\beta(f(\textrm{-},\textrm{-}),\textrm{-},\textrm{-})+\beta(\textrm{-},f(\textrm{-},\textrm{-}),\textrm{-})-\beta(\textrm{-},\textrm{-},f(\textrm{-},\textrm{-}))+2\gamma(\textrm{-},\textrm{-},\textrm{-},\textrm{-}).
\end{multline*}
\end{proof}

In particular, when we use our resolution to compute $\Tor^{\widetilde{\As}}_\ldot(\k,\k)$, all summands killed by the 
augmentation vanish, and we get
 $$
d_1\beta=d_1\gamma=0, \quad d_2\omega=2\gamma,
 $$ 
so we see once again that $\Tor^{\widetilde{\As}}_2(\k,\k)$ is one-dimensional. (This result is not surprising: the second term of the bar homology encodes relations, and in our case the space of relations is one-dimensional, and $\beta$ is the leading term of that relation.)

Moreover, using results of Section~\ref{oper_minimal}, and computing the ``deformed'' differential, it is easy to check that in all arities less than~$8$ the homology is concentrated in one homological degree. Dimensions are summarised in the following

\begin{proposition}
We have
\begin{gather*}
\dim\Tor^{\widetilde{\As}}_1(\k,\k)(2)=1,\\
\dim\Tor^{\widetilde{\As}}_2(\k,\k)(3)=1,\\
\dim\Tor^{\widetilde{\As}}_3(\k,\k)(5)=4,\\
\dim\Tor^{\widetilde{\As}}_4(\k,\k)(6)=14,\\
\dim\Tor^{\widetilde{\As}}_5(\k,\k)(7)=30.
\end{gather*}
\end{proposition}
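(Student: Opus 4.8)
The plan is to deduce these numbers from the dimensions of the minimal resolution $\calR$ of the monomial version, recorded in the previous proposition, by a short Euler-characteristic argument once one controls the deformed differential. Recall that $\Tor^{\widetilde{\As}}_\ldot(\k,\k)$ is the homology of the differential induced on the generators of a free right-module resolution of~$\k$. I would start from the deformed right-module resolution, whose space of generators is again $\calR$, and apply the augmentation (that is, kill every summand in which an element of $\widetilde{\As}_+$ is plugged into a generator). This yields an induced differential $\bar D$ on $\calR$ with
\begin{equation*}
\Tor^{\widetilde{\As}}_{q+1}(\k,\k)=H_q(\calR,\bar D).
\end{equation*}
Note that for the monomial operad the induced differential vanishes (the resolution is minimal), so $\bar D$ consists entirely of the lower-order corrections produced by the deformation; the explicit values already computed, such as $\bar D\omega=2\gamma$, are the first instances, and their higher-arity analogues are what must be determined.

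First I would fix an arity $n$ and use that $\calR(n)$ is a finite-dimensional complex, so its Euler characteristic is a homotopy invariant:
\begin{equation*}
\sum_q (-1)^q \dim \calR(n)_q = \sum_q (-1)^q \dim H_q(\calR(n),\bar D).
\end{equation*}
Substituting the dimensions from the preceding proposition gives the values $1,\,-1,\,0,\,4,\,-14,\,30$ for $n=2,3,4,5,6,7$.

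The key step is to show that for each $n\le 7$ the homology $H(\calR(n),\bar D)$ is concentrated in a single homological degree. Granting this, the unique nonzero group has dimension equal to the absolute value of the Euler characteristic, and since $\Tor_{q+1}=H_q$ this reproduces the asserted dimensions exactly, with the vanishing Euler characteristic in arity~$4$ accounting for the absence of arity~$4$ from the list. To prove concentration I would compute the ranks of the maps $\bar D\colon\calR(n)_q\to\calR(n)_{q-1}$ directly, extending the low-degree formulas already obtained. For $n=5$ and $n=6$ it is enough to check that the one-dimensional top piece maps injectively into the degree below, forcing all homology into the bottom degree; for $n=7$, where three degrees are present, one must additionally verify that $\bar D\colon\calR(7)_4\to\calR(7)_3$ is surjective, so that the homology concentrates in the middle degree~$4$.

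The main obstacle is exactly this rank/concentration verification: one has to produce enough of $\bar D$ in arities $5$, $6$, and $7$ to conclude that the relevant maps have maximal rank. These are finite computations performed precisely as in the preceding proposition, via the inductive formula $D_{k+1}=d_{k+1}-H_{k-1}D_kd_{k+1}$ followed by the augmentation, and they are the only part of the argument requiring genuine effort beyond the Euler-characteristic bookkeeping.
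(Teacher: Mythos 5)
Your proposal is correct and follows essentially the same route as the paper: the paper likewise takes the minimal resolution $\calR$ of the monomial version, computes the deformed differential, and checks that in each arity below $8$ the homology is concentrated in a single homological degree, after which the stated dimensions are forced exactly by the Euler-characteristic bookkeeping you make explicit. Your identification of the precise rank checks needed (injectivity of the top piece in arities $5$, $6$, $7$, plus surjectivity of $\bar D\colon\calR(7)_4\to\calR(7)_3$) is a faithful unpacking of the paper's ``it is easy to check'' step.
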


\section{Applications}\label{appl}

\subsection{Another proof of the PBW criterion for Koszulness}\label{PBW}

The goal of this section is to prove the following statement (which brings to the common ground the PBW criterion of Priddy~\cite{Priddy} for associative algebras and the PBW criterion of Hoffbeck~\cite{Hoffbeck} for operads).

\begin{theorem}\label{newPBW}
An associative algebra (commutative algebra, operad etc.) with a quadratic Gr\"obner basis is Koszul. 
\end{theorem}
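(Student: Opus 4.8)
The plan is to reduce the general ``Koszul'' statement to the homological criterion that the paper has been developing, namely that the homology of the minimal free resolution is concentrated on the diagonal. Recall that Koszulness of a (weight-graded) algebra or operad is equivalent to saying that $\Tor_s$ lives in internal weight exactly $s$; equivalently, the minimal resolution is \emph{generated in diagonal bidegree}, so that a generator sitting in homological degree $s$ has internal weight $s$. So the first step is to fix the precise statement of what ``Koszul'' means in each of the three parallel settings (associative algebras, commutative algebras, operads), and observe that in every case the quantity to control is the internal weight of the representatives of homology classes furnished by Theorem~\ref{min-resol} (respectively Theorem~\ref{min-resol-oper}).

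Second, I would invoke exactly those theorems. The key point is that the explicit homology representatives are monomials $M = f\otimes S_{i_1}\wedge\ldots\wedge S_{i_q}$ (or $T\otimes S_{i_1}\wedge\ldots\wedge S_{i_q}$ in the operad case), where the $S_j$ correspond to relation divisors, and the indecomposability conditions (i)--(ii) force the chosen divisors to overlap only between neighbours. When the Gr\"obner basis is \emph{quadratic}, every relation divisor $S_j$ covers exactly one ``overlap unit'' (two consecutive letters, or one internal edge of a tree), so a homological-degree-$q$ generator is built by linking $q$ relations along a chain of $q$ mutual overlaps. The plan is to read off from conditions (i)--(ii) that such a chain of $q$ quadratic relations occupies exactly $q+1$ generators of the free algebra/operad, i.e. has internal weight $q+1$ (indexing so that a single generator has weight $1$ and $\Tor_1$ sits in weight $1$, a chain of $q$ relations has weight $q+1$ and lives in $\Tor_{q}$ or $\Tor_{q+1}$ according to the chosen degree convention). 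The crucial arithmetic is that each additional relation in the chain raises the homological degree by one \emph{and} the internal weight by one, precisely because the overlap is of minimal (quadratic) length; this is what pins the generators to the diagonal.

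Third, I would transport the statement from the monomial algebra to the original one using the deformation theorem (Section~\ref{general}, and its operad analogue in Section~\ref{AntiAsExample}). The deformed differential $D$ only adds \emph{lower terms} with respect to the leading-monomial order, and — this is the point I would emphasise — it preserves the internal weight, since the relations are homogeneous of weight $2$. Hence the generating space of the deformed minimal resolution has the same bigraded dimensions as in the monomial case, so diagonal concentration for the monomial version immediately yields diagonal concentration for $\widetilde{R}$ (resp. $\widetilde{\calO}$), which is Koszulness. The three cases (associative, commutative, operadic) are handled uniformly because, as the paper has stressed, all the constructions and the homology-representative theorems hold verbatim in each monoidal category, with the commutative case deduced by the Maschke-type invariants argument from the remark after Theorem~\ref{incl-excl}.

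The main obstacle I expect is verifying cleanly that in the quadratic case conditions (i) and (ii) really do force \emph{every} homology representative onto the diagonal, with no sporadic off-diagonal classes. Concretely, one must rule out a chain in which two linked quadratic relations overlap in a degenerate way that would shorten the monomial, and one must check that the indecomposability condition (ii) does not permit ``gaps'' that would increase weight without increasing homological degree. In the tree-monomial (operad) setting this is more delicate than for algebras, since, as the paper notes, there is no canonical linear ordering of divisors and ``trees grow in several directions''; so the bookkeeping that a quadratic overlap is always a single shared internal edge, contributing exactly one unit of weight per unit of homological degree, is the step that needs the most care. Once that combinatorial lemma is in hand, the rest is formal.
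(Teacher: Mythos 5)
Your proposal is correct, and its overall skeleton (handle the monomial case by explicit combinatorics, then transport to the general case using that the deformation preserves internal weight and can only decrease homology) is the same as the paper's; but the key lemma you use in the monomial step is different. The paper's proof disposes of the monomial case by observing that the right-module resolution of Section~\ref{right_mod_alg} (and its operadic analogue) \emph{coincides with the Koszul complex} when the relations are quadratic, and then invokes acyclicity of the Koszul complex as the Koszulness criterion; you instead invoke the diagonal-concentration-of-$\Tor$ criterion and verify it directly from the chain description of Theorems~\ref{min-resol} and~\ref{min-resol-oper}: a homological-degree-$q$ generator is a chain of $q$ quadratic relations linked along single letters (resp.\ single vertices), hence has weight exactly $q+1$. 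These are equivalent criteria, and the underlying combinatorics is the same (an indecomposable covering of a length-$p$ word, or a $p$-vertex tree, by quadratic divisors must consist of all $p-1$ consecutive pairs, resp.\ all $p-1$ internal edges); your route has the advantage of not requiring one to know what the Koszul complex looks like in each monoidal category, while the paper's is a one-line identification once that complex is known. One point in your third step deserves sharpening: the deformed resolution is not a \emph{minimal} resolution by fiat, so "same bigraded generating space" does not by itself give the conclusion. The clean argument is either that $\Tor$ of the general object is the homology of the induced deformed differential on the generators, hence a subquotient of a diagonally concentrated space (this is the paper's "homology can only decrease"), or the sharper observation that the induced differential lowers homological degree by one while preserving weight, so on a diagonally concentrated generating space it must vanish identically --- whence the deformed resolution is automatically minimal and $\Tor$ of the general algebra even has the \emph{same} bigraded dimensions as in the monomial case. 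Either one-line remark closes the gap; with it, your proof is complete.
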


\begin{proof}
First of all, it is enough to prove it in the monomial case, since it gives an upper bound on the homology: for the deformed differential, the cohomology may only decrease. In the monomial case, it follows from an easy observation that the free right module resolution we constructed actually coincides with the Koszul complex of the corresponding algebra (commutative algebra, operad etc.), and acyclicity of the Koszul complex is one of the equivalent criteria of Koszulness.
\end{proof}

\subsection{Operads and commutative algebras}\label{com-alg}

Recall a construction of an operad from a graded commutative algebra described in~\cite{Khor}.

Let $A$ be a graded commutative algebra. Define an operad $\calO_A$ as follows. We put $\calO_A(n):=A_{n-1}$, and let
the partial composition map 
 $$
\circ_i\colon\calO_A(k)\otimes\calO_A(l)=A_{k-1}\otimes A_{l-1}\to A_{k+l-2}=\calO_A(k+l-1)  
 $$
be the product in~$A$. 

As we remarked in \cite{DK}, a basis of the algebra~$A$ leads to a basis of the operad~$\calO_A$: product of generators of the polynomial algebra is replaced by the iterated composition of the corresponding generators of the free operad where each composition is substitution into the last slot of an operation. Assume that we know a Gr\"obner basis for the algebra~$A$ (as an associative algebra). It leads to a Gr\"obner basis for the operad~$\calO_A$ as follows: we first impose the quadratic relations defining the operad $\calO_{\k[x_1,\ldots,x_n]}$ coming from the polynomial algebra (stating that the result of a composition depends only on the operations composed, not on the order in which we compose operations), and then use the identification of relations in the polynomial algebra with elements of the corresponding operad, as above. Our next goal is to explain how to use the Anick resolution of the trivial module for~$A$ to construct a small resolution of the trivial module for~$\calO_A$. 

\begin{theorem}
There exists a free right module resolution 
 $$
\calR\circ\calO_A\to\k\to0
 $$ 
of the trivial~$\calO_A$-module. Here $\calR$ is the operad generated by all bar homology classes of $A$, where we put classes of internal degree~$k-1$ in arity~$k$, with relations $c_1\circ_k c_2=0$, for all classes $c_1,c_2$ where $c_1$ has arity~$k$. In other words, in $\calR$ all compositions are allowed except for those using the last slot of an operation.
\end{theorem}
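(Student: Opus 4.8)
The plan is the paper's two-step scheme: settle the monomial case with the inclusion--exclusion resolution and then deform as in Section~\ref{AntiAsExample}, the monomial computation dominating the general one by the upper-bound principle behind Theorem~\ref{newPBW}. The organising observation is that, presenting $A$ associatively, the Gr\"obner basis of $\calO_A$ matches that of $A$ relation by relation: the commutativity relations of $A$ (quadratic) become the quadratic relations of $\calO_A$ inherited from $\calO_{\k[x_1,\ldots,x_n]}$, while the remaining relations of $A$ become the higher relations of $\calO_A$.

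For monomial $A$ I would identify the generators of the minimal module resolution, namely $\Tor^{\calO_A}(\k,\k)=H((\calA_\calG)_{ab})$, using Theorem~\ref{min-resol-oper}: these are the connected-covering tree monomials satisfying conditions~(i)--(ii). To evaluate them I would order the relation divisors with the quadratic (commutativity) ones first and apply the iterated-homology principle of Section~\ref{ass_minimal}; the spectral sequence degenerates at $E_1$ because inverting the commutativity differential commutes with the remaining differentials, so the homology is computed in two stages. The first stage is the bar homology of the purely commutative operad $\calO_{\k[x_1,\ldots,x_n]}$, which is Koszul by Theorem~\ref{newPBW} and accounts for the freedom of tree shape, i.e.\ the branching of $\calR$; the second stage is precisely the inclusion--exclusion computation for $A$ from Section~\ref{right_mod_alg}, contributing the Anick chains of $A$ attached along the branches. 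Combining the two stages (no correction terms, by the $E_1$-degeneration), $\Tor^{\calO_A}(\k,\k)$ is identified, arity by arity, with the connected-covering tree monomials whose relation divisors assemble into Anick chains of $A$.

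It remains to read off the operad structure. The operadic-indecomposable classes---those whose covering does not split across any grafting---are single Anick chains of $A$, i.e.\ the bar homology classes of $A$, a class of internal degree $k-1$ giving an arity-$k$ generator in the same homological degree. The only relation is that a grafting into the last slot vanishes: the last input of an arity-$k$ chain lies in the direction along which chains concatenate rather than branch, so such a composite is not a reduced representative in the sense of Theorem~\ref{min-resol-oper}, whereas every branching composition survives freely. This exhibits $\Tor^{\calO_A}(\k,\k)$ as the operad $\calR$, and the boundary maps of the right-module resolution of Section~\ref{right_mod_oper} assemble into $\calR\circ\calO_A\to\k\to0$.

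For a general $A$ I would deform as in Section~\ref{AntiAsExample}. The lower terms of the relations of $\calO_A$ are the operadic images of the lower terms of the relations of $A$, so running the inductive construction of $D$ and $H$ on tree monomials reduces chain by chain to the deformation of the Anick resolution of $A$; the generators cancelled on the operad side are thus exactly those cancelled for $A$, and $\calR\circ\calO_A\to\k$ remains a resolution. I expect the main obstacle to be the bookkeeping behind the $E_1$-degeneration in the monomial case: one must check that inverting the commutativity differential genuinely commutes with the remaining differentials once the shuffle combinatorics of tree monomials is taken into account, and that the resulting two-stage description is simultaneously compatible with the branching compositions that build $\calR$ and with the inductive homotopy of the deformation.
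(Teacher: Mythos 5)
Your monomial-case setup is broadly parallel to the paper's (the paper reads the answer off Theorem~\ref{min-resol-oper} directly: the minimal resolution of the monomial replacement of $\calO_A$ has exactly the shape of $\calR$, but generated by Anick \emph{chains} of the monomial version of $A$ rather than by homology classes), and your explanation of why last-slot compositions vanish is essentially right. The genuine gap is in your deformation step, which is where the actual content of the theorem lies: the passage from chains to \emph{bar homology classes of $A$}. Your claim that ``the lower terms of the relations of $\calO_A$ are the operadic images of the lower terms of the relations of $A$, so running the inductive construction of $D$ and $H$ \ldots\ reduces chain by chain to the deformation of the Anick resolution of $A$'' does not hold: the quadratic relations inherited from $\calO_{\k[x_1,\ldots,x_n]}$ have lower terms of their own (they are not in bijection with the commutativity relations of $A$, and they rewrite non-canonical compositions as canonical ones), and recalling them does \emph{not} act chain by chain --- it rearranges tree shapes and mixes generators sitting on different branches, a phenomenon with no counterpart inside the Anick resolution of $A$. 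Moreover, if you simply run the one-step deformation of Section~\ref{AntiAsExample} on the full Gr\"obner basis, you obtain a resolution whose underlying graded collection is unchanged: its generators are still tree-assemblies of \emph{chains} of the monomial replacement. Deformation by itself never ``cancels'' generators; it only makes the induced differential on indecomposables nonzero. So this route never produces the collection $\calR\circ\calO_A$ asserted by the theorem without a further (homotopy-transfer type) argument that you do not supply.

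The missing idea --- the key point of the paper's proof --- is to perform the deformation in two \emph{ordered} stages. First one recalls the lower terms of only those Gr\"obner basis elements coming from the relations of $A$, while the leading terms of the quadratic relations (those starting with $\alpha(\beta(\textrm{-},\textrm{-}),\textrm{-})$) are still set to zero. Because the quadratic relations remain monomial at this stage, the partially deformed complex literally splits into many copies of the associative-algebra resolution of $A$, one along each comb of last-slot compositions; its homology can therefore be computed explicitly, and this computation is exactly what replaces chains by bar homology classes of $A$, producing the collection $\calR\circ\calO_A$. Only afterwards does one recall the lower terms of the quadratic relations: this second stage merely induces a (generally nonzero) differential on the homology just computed, and the result is a resolution of the required type. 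Without this ordering your argument cannot reach an $\calR$ generated by homology classes of $A$; note also that the obstacle you flag at the end (bookkeeping for the $E_1$-degeneration in the monomial case) is not where the difficulty sits --- it is precisely the two-stage deformation.
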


\begin{proof}
This statement is almost immediate from our previous results. Indeed, we know how to obtain a Gr\"obner basis for $\calO_A$ from a Gr\"obner basis of $A$. If we apply the result of Theorem~\ref{min-resol-oper}, it is clear that the minimal resolution over the monomial replacement of~$\calO_A$ is of the same form as defined above, but we should start with the operad generated by chains, not by the homology. To obtain a resolution over $\calO_A$, let us look carefully into the general reconstruction scheme from the previous section. It recovers lower terms of differentials and homotopies by recalling lower terms of elements of the Gr\"obner basis. Let us do the reconstruction in two steps. At first, we shall recall all lower terms of relations except for those starting with $\alpha(\beta(\textrm{-},\textrm{-}),\textrm{-})$; the latter are still assumed to vanish. On the next step we shall recall all lower terms of those quadratic relations. Note that after the first step we model many copies of the associative algebra resolution and the differential there; so we can compute the homology explicitly. At the next step, a differential will be induced on this homology we computed, and we end up with a resolution of the required type. 
\end{proof}

In some cases the existence of such a resolution is enough to compute the bar homology of~$\calO_A$; for example, it is so when the algebra~$A$ is Koszul, as we shall see now. In general, the differential of this resolution incorporates lots of information, including the higher operations (Massey (co)products) on the homology of~$A$.

Recall that if the algebra $A$ is quadratic, then the operad $\calO_A$ is quadratic as well. In~\cite{DK}, we proved that if the algebra~$A$ is PBW, then the operad~$\calO_A$ is PBW as well, and hence is Koszul. Now we shall prove the following substantial generalisation of this statement (substantially simplifying the proof of this statement given in~\cite{Khor}).

\begin{theorem}\label{O_AisKoszul}
If the algebra $A$ is Koszul, then the operad $\calO_A$ is Koszul as well. 
\end{theorem}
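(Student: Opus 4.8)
The plan is to reduce the statement to a weight count and then read it off from the resolution of the preceding theorem. Recall that for a quadratic operad generated in arity~$2$, Koszulness is equivalent to the bar homology $\Tor^{\calO_A}_\ldot(\k,\k)$ being concentrated on the diagonal, that is, to the vanishing of the component of $\Tor^{\calO_A}_q(\k,\k)$ in arity~$n$ unless $n=q+1$ (equivalently, unless the syzygy weight equals the homological degree). Since $A$ is Koszul it is in particular quadratic and generated in degree~$1$, and hence $\calO_A$ is generated in arity~$2$ and is quadratic as well (as recalled just before the statement); so it suffices to establish this diagonal concentration for $\calO_A$.

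First I would invoke the free right module resolution $\calR\circ\calO_A\to\k$ provided by the preceding theorem. As the bar homology coincides with $\Tor^{\calO_A}_\ldot(\k,\k)$, it is computed as the homology $H(\calR,\bar D)$ of the differential $\bar D$ induced on the space of generators $\calR\cong\calR\circ\k$. The differential $\bar D$ lowers the homological degree by~$1$ and preserves the arity, so everything comes down to understanding the bigrading (homological degree versus arity) of the operad~$\calR$.

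Here the Koszulness of $A$ enters. By definition it means that the bar homology classes of $A$ in homological degree~$d$ all have internal degree~$d$; placing, as prescribed, a class of internal degree~$k-1$ in arity~$k$, we see that every generator of $\calR$ of homological degree~$d$ sits in arity~$d+1$. A composition of generators $c_1,\ldots,c_r$ of homological degrees $d_1,\ldots,d_r$ (the only compositions occurring in $\calR$ being those that avoid the last slot, though this plays no role in the count) lands in homological degree $\sum_j d_j$ and arity $\sum_j (d_j+1)-(r-1)=\bigl(\sum_j d_j\bigr)+1$. Hence \emph{every} element of $\calR$ satisfies $\text{arity}=\text{homological degree}+1$: the operad $\calR$ is concentrated on the diagonal, each component $\calR(n)$ living purely in homological degree~$n-1$.

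The conclusion is then immediate: within a fixed arity~$n$ the complex $(\calR(n),\bar D)$ is concentrated in the single homological degree~$n-1$, so $\bar D$ vanishes there and $\Tor^{\calO_A}_q(\k,\k)(n)=H_q(\calR(n),\bar D)$ equals $\calR(n)$ when $q=n-1$ and vanishes otherwise. This is exactly the diagonal concentration, whence $\calO_A$ is Koszul. The one point that requires honest care — rather than the formal bigrading computation above — is matching the internal (weight) grading of the bar homology classes of $A$ with the homological degree they acquire as generators of the resolution, so that the hypothesis ``$A$ Koszul'' translates precisely into ``the generators of $\calR$ lie on the diagonal''; once this identification is pinned down, homogeneity of operadic composition propagates the diagonal property automatically.
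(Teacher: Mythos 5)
Your proof is correct and takes essentially the same route as the paper's own (very terse) argument: both deduce from the Koszulness of $A$ that the generators of $\calR$ lie on the diagonal, that operadic composition keeps all of $\calR$ on the diagonal, and hence that the homology of the induced differential on $\calR$ --- which computes the bar homology of $\calO_A$ --- is diagonally concentrated. You simply spell out the arity/degree bookkeeping (including the vanishing of the induced differential within each arity) that the paper leaves implicit.
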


\begin{proof}
Koszulness of our algebra implies that the homology of the bar resolution is concentrated on the diagonal. Consequently, the operad $\calR$ constructed above is automatically concentrated on the diagonal, and so is its homology,  which completes the proof.
\end{proof}

It is worth mentioning that the same can be applied to dioperads. For a commutative graded algebra~$A$, let us put $\calD_A(m,n):=A_{m+n-2}$, and let
the partial composition map 
\begin{multline*}
\circ_{i,j}\colon\calD_A(m,n)\otimes\calD_A(p,q)=A_{m+n-2}\otimes A_{p+q-2}\to\\ \to A_{m+n+p+q-4}=\calD_A(m+p-1, n+q-1)  
\end{multline*}
be the product in~$A$. The bi-collection $\{\calD_A(m,n)\}$ forms a dioperad, which is quadratic whenever the algebra~$A$ is, and, as it turns out, is Koszul whenever the algebra $A$ is. This can be proved similarly to how the previous theorem is proved.

\subsection{The operads of Rota--Baxter algebras}

The main goal of this section is to compute the bar homology for the operad of Rota--Baxter algebras, and the operad of noncommutative Rota--Baxter algebras. Those are among the simplest examples of operads which are not covered by the Koszul duality theory, being operads with nonhomogeneous relations.  

\subsubsection{The operads $RB$ and $ncRB$, and their Gr\"obner bases}

\begin{definition}
A commutative Rota--Baxter algebra of weight~$\lambda$ is a vector space with an associative commutative product $a,b\mapsto a\cdot b$ and a unary operator~$P$ which satisfy the following identity:
\begin{equation}\label{RBeq}
P(a)\cdot P(b)=P(P(a)\cdot b+a\cdot P(b)+\lambda a\cdot b). 
\end{equation}
We denote by $RB$ the operad of Rota--Baxter algebras. We view it as a shuffle operad with one binary and one unary generator.   
\end{definition}

Commutative Rota--Baxter algebras were defined by Rota~\cite{Rota} who was inspired by work of Baxter \cite{Baxter} in probability theory. Various constructions of free commutative Rota--Baxter algebras were given by Rota, Cartier \cite{Cartier} and, more recently, Guo and Keigher \cite{GKe}. The latter paper also contains extensive bibliography and information on various applications of those algebras.

\begin{definition}
A noncommutative Rota--Baxter algebra of weight~$\lambda$ is a vector space with an associative product $a,b\mapsto a\cdot b$ and a unary operator~$P$ which satisfy the same identity as above:
 $$
P(a)\cdot P(b)=P(P(a)\cdot b+a\cdot P(b)+\lambda a\cdot b). 
 $$
We denote by $ncRB$ the operad of noncommutative Rota--Baxter algebras. Somehow, it is a bit simpler than the operad in the commutative case, because it can be viewed as a nonsymmetric operad with one binary and one unary generator.   
\end{definition}

Noncommutative Rota--Baxter algebras has been extensively studied in the past years. We refer the reader to the paper of Ebrahimi--Fard and Guo \cite{EF-G2} for an extensive discussion of applications and occurrences of those algebras in various areas of mathematics, and a combinatorial construction of the corresponding free algebras.

Let us consider the path-lexicographic ordering of the free operad; we assume that $P>\cdot$. 

\begin{proposition}
The defining relations for operads $RB$ and $ncRB$ form a Gr\"obner basis.  
\end{proposition}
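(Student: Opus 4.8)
The plan is to run the operadic Buchberger algorithm of \cite{DK}, exactly as was done above for $\widetilde{\As}$ and for the odd associative operads: a set of relations is a Gr\"obner basis precisely when the S-polynomial attached to every small common multiple of two leading monomials reduces to zero modulo the relations. So the first step is to fix the leading monomials for the path-lexicographic order with $P>\cdot$. For the associative relation of $ncRB$ (resp.\ the commutative-associative relations of $RB$) these are the standard ones, reproducing the Gr\"obner basis of $\As$ (resp.\ of $\Com$) inside the sub-operad generated by $\cdot$, and their self-overlaps reduce to zero by the classical computation. For the Rota--Baxter relation
\[
P(a)\cdot P(b)=P(P(a)\cdot b)+P(a\cdot P(b))+\lambda\,P(a\cdot b)
\]
I first note that it is nonhomogeneous: the $\lambda$-term carries one fewer $P$. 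Since the path-lexicographic order with $P>\cdot$ prefers tree monomials with $P$ closer to the root, the leading monomial is $P(P(a)\cdot b)$ (the left/right choice is a matter of the convention of the order, the two cases being mirror images), so the rewriting rule reads
\[
P(P(a)\cdot b)\ \rightsquigarrow\ P(a)\cdot P(b)-P(a\cdot P(b))-\lambda\,P(a\cdot b),
\]
and every monomial on the right is strictly smaller, so reductions terminate.

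Second, I would enumerate the critical pairs. For $ncRB$ the only generator common to the associative leading monomial and the Rota--Baxter leading monomial is $\cdot$, and an analysis of the parent/child edges at a shared $\cdot$-vertex shows that no consistent overlap exists: if the shared $\cdot$ is the top of the associativity pattern its left child is forced to be simultaneously $\cdot$ and $P$, and if it is the bottom its parent is forced to be simultaneously $\cdot$ and $P$. Hence $ncRB$ has no mixed S-polynomials, and the whole verification collapses onto self-overlaps. In the commutative case one must additionally inspect the handful of overlaps between the $\Com$ leading monomials and the Rota--Baxter leading monomial (here the shared $\cdot$ may sit at the root of the $\Com$ pattern, so the edge argument no longer rules them out a priori) and check directly that these reduce to zero.

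The heart of the matter, and the step I expect to be the main obstacle, is the \emph{single} self-overlap of the Rota--Baxter leading monomial. Because $P$ is unary, two copies of $P(P(a)\cdot b)$ can share only a $P$-vertex, namely when the root $P$ of one copy is the inner $P$ of the other; this yields exactly one small common multiple, living in arity $3$,
\[
W=P\bigl(P(P(a)\cdot b)\cdot c\bigr),
\]
which is simultaneously an outer redex and an inner redex. The plan is to reduce $W$ along both paths to a normal form, carefully tracking the three summands (including the $\lambda$-weighted ones) created at each application of the rule, re-applying the rule and the associativity/commutativity relations to all descendants, and verifying that the two normal forms agree. This confluence is exactly what underlies the classical well-definedness of free Rota--Baxter algebras, so it is bound to vanish; the real work is the bookkeeping of the many lower-order summands, and the certification that the critical-pair enumeration is complete once the associative/commutative normal forms are taken into account.

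Finally, having shown that the associative/commutative self-overlaps, the unique Rota--Baxter self-overlap, and (in the commutative case) the few $\Com$--Rota--Baxter overlaps all reduce to zero, and that there are no further critical pairs, the operadic Buchberger criterion lets me conclude that the defining relations form a Gr\"obner basis for both $RB$ and $ncRB$.
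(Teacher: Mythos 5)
Your overall strategy is exactly the paper's: invoke the operadic Buchberger/Diamond Lemma criterion, note that the purely associative (resp.\ commutative--associative) relations already form a Gr\"obner basis among themselves, identify $P(P(a_1)\cdot a_2)$ as the Rota--Baxter leading term, show it has no overlap with the associativity leading terms, and reduce to zero the S-polynomial coming from the self-overlap $P(P(P(a_1)\cdot a_2)\cdot a_3)$. For $ncRB$ your enumeration of critical pairs is complete and correct, and your deferral of the lengthy reduction matches the paper, which also omits it (remarking only that it can be read off from the formula for $d\nu_3$ given later for the $ncRB_\infty$ differential).

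However, your bookkeeping for the commutative operad $RB$ is wrong in two compensating ways, one of which leaves a genuine gap. First, the mixed $\Com$--Rota--Baxter overlaps you propose to ``check directly'' do not exist: both shuffle leading terms $(a_1\cdot a_2)\cdot a_3$ and $(a_1\cdot a_3)\cdot a_2$ have a $\cdot$-vertex as the left child of their root, so your own parent/child edge argument still applies verbatim (a shared $\cdot$-vertex would need its left child, or its parent, to be labelled simultaneously $\cdot$ and $P$). Second --- and this is the gap --- in the shuffle category the Rota--Baxter self-overlap is \emph{not} unique: relabelling leaves by shuffle permutations yields two distinct small common multiples, $P(P(P(a_1)\cdot a_2)\cdot a_3)$ and $P(P(P(a_1)\cdot a_3)\cdot a_2)$, and this is precisely where the paper's ``two S-polynomials to be reduced'' for $RB$ come from. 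Your enumeration, which asserts a unique self-overlap, therefore leaves one S-polynomial unverified in the $RB$ case; the repair is easy (the second reduction mirrors the first), but as written the certification of completeness of the critical-pair list --- which you rightly single out as essential --- fails for $RB$. A further caution: the claim that confluence ``is bound to vanish'' because free Rota--Baxter algebras are well-defined is not an argument. Free algebras exist in any variety; it is the confluence computation that produces a monomial basis of the free Rota--Baxter algebra, not the other way around, so the reduction of the S-polynomial(s) must actually be carried out.
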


\begin{proof}
Here we present a proof for the case of $ncRB$, the proof for $RB$ is essentially the same, with the only exception that there are two S-polynomials to be reduced, as opposed to one S-polynomial in the case of $ncRB$ (which, as we pointed above, is easier because we are dealing with a nonsymmetric operad).
 
For the associative suboperad of $ncRB$, the defining relations form a Gr\"obner basis, so the S-polynomials coming from the small common multiples the leading term of the associativity relation has with itself clearly can be reduced to zero. The leading term of the Rota--Baxter relation is $P(P(a_1)a_2)$. This term only has a nontrivial overlap with itself, not with the leading term of the associativity relation, and that overlap is $P(P(P(a_1)\cdot a_2)\cdot a_3)$. From this overlap, we compute the S-polynomial 
\begin{multline*}
-P(P(a_1\cdot P(a_2))\cdot a_3)-\lambda P(P(a_1\cdot a_2)\cdot a_3)+P((P(a_1)\cdot P(a_2))\cdot a_3)+\\
+P((P(a_1)\cdot a_2)\cdot P(a_3))+\lambda P((P(a_1)\cdot a_2)\cdot a_3)-P(P(a_1)\cdot a_2)\cdot P(a_3), 
\end{multline*}
and it can be reduced to zero by a lengthy sequence of reductions which we omit here (but which in fact can be read from the formula for $d\nu_3$ in Proposition~\ref{differential} below). By Diamond Lemma~\cite{DK}, our relations form a Gr\"obner basis.  
\end{proof}

In the case of the operad $ncRB$, our computation immediately provides bases for free noncommutative Rota--Baxter algebras. Indeed, since our operad is nonsymmetric, the degree $n$ component of the free noncommutative Rota--Baxter algebra generated by the set $B$ is nothing but $ncRB(n)\otimes V^{\otimes n}$, where $V=\mathop{\mathrm{span}}(B)$, so we can use the above Gr\"obner basis to describe that component. More precisely, we first define the set of admissible expressions on a set $B$ recursively as follows:
\begin{itemize}
 \item elements of $B$ are admissible expressions;
 \item if $b$ is an admissible expression, then $P(b)$ is an admissible expression;
 \item if $b_1,\ldots,b_k$ are admissible expressions, and for each $i$ either $b_i$ is an element of $B$ or $b_i=P(b_i')$ with $b_i'$ an admissible expression, then their associative product $b_1\cdot b_2\cdot\ldots\cdot b_k$ is an admissible expression.
\end{itemize}
Based on this definition, we shall call some of admissible expressions the Rota--Baxter monomials, tracing the construction of an admissible expression and putting some restrictions. Namely,
\begin{itemize}
 \item elements of $B$ are Rota--Baxter monomials;
 \item if $b$ is a Rota--Baxter monomial, which, as an admissible expression, is either $b=P(b')$ or $b=b_1\cdot b_2\cdot\ldots\cdot b_k$ with $b_1\in B$, then $P(b)$ is a Rota--Baxter monomial;
 \item if $b_1,\ldots,b_k$ are Rota--Baxter monomials, and for each $i$ either $b_i\in B$ or $b_i=P(b_i')$ for some $b_i'$, then their associative product $b_1\cdot b_2\cdot\ldots\cdot b_k$ is a Rota--Baxter monomial.
\end{itemize}

Our previous discussion means that the following result holds:
\begin{theorem}
The set of all Rota--Baxter monomials forms a basis in the free noncommutative Rota--Baxter algebra generated by the set~$B$. 
\end{theorem}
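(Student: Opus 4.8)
The plan is to deduce the statement directly from the Gröbner basis computed in the previous proposition, using the standard normal form theorem for operadic Gröbner bases from \cite{DK}. Since $ncRB$ is a nonsymmetric operad with one binary generator and one unary generator $P$, the free noncommutative Rota--Baxter algebra on $B$ has its degree~$n$ component canonically isomorphic to $ncRB(n)\otimes V^{\otimes n}$, where $V=\mathop{\mathrm{span}}(B)$. Thus a basis of the free algebra is obtained from a monomial basis of the operad $ncRB$ by decorating the leaves of each basis tree monomial with elements of~$B$; equivalently, admissible expressions are exactly the leaf-decorated tree monomials of the free operad $\calF_\calM$, and it remains only to single out those that descend to a basis of $ncRB$.

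By the Diamond Lemma \cite{DK}, once the defining relations form a Gröbner basis (as just established), the tree monomials that are not divisible by any of the leading terms of that Gröbner basis form a basis of $ncRB$. There are exactly two leading terms to avoid: the leading term of the associativity relation, and the leading term $P(P(a_1)\cdot a_2)$ of the Rota--Baxter relation. So the heart of the argument is purely combinatorial: I would show that a leaf-decorated tree monomial avoids both leading terms if and only if it is a Rota--Baxter monomial in the sense of the recursive definition above.

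This matching splits according to the two forbidden patterns. Avoiding the associativity leading term forces every associative product to be written in a fixed canonical bracketing, which is exactly the requirement in the recursive definition that in a product $b_1\cdot b_2\cdot\ldots\cdot b_k$ each factor $b_i$ be either an element of $B$ or of the form $P(b_i')$, i.e. not itself a top-level associative product. Avoiding the leading term $P(P(a_1)\cdot a_2)$ forbids applying $P$ to a product whose first factor is a $P$-expression; this is precisely the clause permitting $P(b)$ only when $b=P(b')$ or when $b=b_1\cdot\ldots\cdot b_k$ with $b_1\in B$. An induction on the number of vertices of the underlying tree then shows that the two descriptions coincide term by term.

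The part requiring the most care is this last combinatorial identification, and in particular pinning down which planar tree represents the leading term of associativity under the path-lexicographic ordering with $P>\cdot$, so that ``avoiding it'' really translates into the stated restriction on the factors of a product. Once the leading monomials are described explicitly as tree patterns, the equivalence with the recursive definition is a routine check, and the theorem follows because the normal tree monomials, decorated by elements of~$B$, are exactly the Rota--Baxter monomials.
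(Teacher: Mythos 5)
Your proposal is correct and follows exactly the paper's route: the paper treats this theorem as an immediate consequence of the preceding Gröbner basis computation, the identification of the degree-$n$ component of the free algebra with $ncRB(n)\otimes V^{\otimes n}$, and the Diamond Lemma normal-form principle from \cite{DK}, with the recursive definition of Rota--Baxter monomials engineered precisely to describe the tree monomials avoiding the two leading terms $(a_1\cdot a_2)\cdot a_3$ and $P(P(a_1)\cdot a_2)$. You have simply made explicit the combinatorial identification that the paper leaves implicit in the phrase ``our previous discussion means that the following result holds.''
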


It would be interesting to compare this basis with the basis from~\cite{EF-G1}.

\subsubsection{Bar homology}

In this section, we compute the bar homology for both operads $RB$ and $ncRB$.

\begin{proposition}
For each of the operads $RB$ and $ncRB$, the resolution for its monomial version from Sec.4.2 is minimal, that is the differential induced on the space of generators is zero. 
\end{proposition}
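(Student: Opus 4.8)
The plan is to use the criterion that the resolution of a monomial operad is minimal precisely when the differential induced on its space of generators $\bigoplus_q\calV_q$ vanishes, and to reduce this to a combinatorial statement about how the relations can sit inside a single tree monomial. Recall from the construction of the right module resolution that, for a generator $T\otimes S_1\wedge\ldots\wedge S_q$ (an indecomposable covering of the tree monomial $T$ by relation divisors), the summand obtained by omitting $S_j$ stays in $\calV_{q-1}$ with a \emph{unit} module factor exactly when the remaining divisors $S_1,\ldots,\hat{S_j},\ldots,S_q$ still cover \emph{every} internal edge of $T$; otherwise it picks up a nontrivial module factor and dies under the augmentation. Hence the induced differential on generators is zero if and only if every indecomposable covering is \emph{irredundant}, that is, each of its divisors covers some internal edge of $T$ that none of the others covers.

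First I would record the relations. By the preceding proposition the defining relations already form a Gr\"obner basis, so there are no higher elements to account for: the leading monomials are the (quadratic) associativity leading term(s), each of which spans the single edge joining its two product vertices, together with the Rota--Baxter leading term $P(P(\textrm{-})\cdot\textrm{-})$, whose three-vertex subtree spans exactly two edges --- the edge from the product up to the outer~$P$, and the edge from the inner~$P$ (sitting in the left input) up to the product.

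The key step is the lemma that \emph{every internal edge of every tree monomial is covered by at most one relation occurrence}. I would prove this by classifying an edge according to the decorations of its two endpoints. An edge joining two product vertices can only be the unique edge of an associativity occurrence, and that occurrence is determined by the edge itself; an edge incident to a $P$-vertex is never covered by an associativity relation (which involves no $P$), while among Rota--Baxter occurrences the two edges covered are of mutually exclusive types --- a product sitting below a $P$ versus a $P$ sitting in the left input of a product --- each type determining its occurrence uniquely. Since a single edge cannot be of two of these types at once, and since the associativity type (product below product) is disjoint from both Rota--Baxter types, no edge lies in two distinct occurrences. This is exactly the combinatorial shadow of the fact that the only nontrivial overlaps are the associativity self-overlaps and the Rota--Baxter self-overlap, whose S-polynomials reduce to zero.

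Granting the lemma, the edge sets covered by distinct divisors of any indecomposable covering are pairwise disjoint, and each divisor (having at least two vertices) covers at least one edge; therefore omitting any single divisor leaves those edges uncovered, so the remaining collection no longer covers all of $T$. Consequently no omission yields a unit-module summand, the induced differential on generators vanishes identically, and the resolution is minimal. The argument applies to both $ncRB$ and $RB$; the only difference is that the commutative operad carries several associativity leading monomials and two self-overlaps rather than one. I expect the main obstacle to be precisely this commutative bookkeeping: listing the shuffle-operad associativity leading monomials and checking that each still spans a single product--product edge disjoint from every Rota--Baxter edge. Once that edge-type separation is verified, minimality follows exactly as above.
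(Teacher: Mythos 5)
Your proof is correct, and it reaches the conclusion by a somewhat different route than the paper. The paper's proof enumerates the tree monomials that admit an indecomposable covering by leading terms --- the left combs $(\ldots((a_1\cdot a_{i_2})\cdot a_{i_3})\cdots)\cdot a_{i_n}$ coming from the associativity leading term(s), and the towers $P(P(\ldots P(P(a_1)\cdot a_{i_2})\cdots)\cdot a_{i_n})$ coming from the Rota--Baxter leading term --- and then asserts that each such monomial carries exactly one indecomposable covering, so that every omission of a divisor produces a decomposable element. You never enumerate the generators; instead you isolate a local lemma (every internal edge of any tree monomial lies in at most one occurrence of a leading term, proved by classifying an edge according to the decorations of its endpoints), which forces every indecomposable covering to be irredundant and hence kills the induced differential. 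Your lemma is precisely the content hiding behind the paper's ``it is easy to see'': it implies the paper's uniqueness claim as a corollary, since an indecomposable covering must contain, for each internal edge, the unique occurrence covering that edge, and therefore coincides with the set of all occurrences in the monomial. Your route is more uniform --- it needs no inspection of arities and handles the commutative and noncommutative cases by the same edge-type argument --- and your reduction of minimality to irredundancy of coverings is stated and justified more carefully than in the paper (one negligible imprecision: when the remaining divisors after an omission do not all fit inside a single root-containing subtree, the corresponding summand of the right-module differential is actually zero rather than a nontrivial module element, but either way it contributes nothing to the induced differential). What the paper's enumeration buys instead is the explicit list of generators together with their arities and homological degrees, which is reused immediately in the following theorem to compute $\dim H^l(\mathbf{B}(RB))(k)$ and $\dim H^l(\mathbf{B}(ncRB))(k)$; your argument establishes minimality but would need to be supplemented by that enumeration to extract the homology itself.
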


\begin{proof}
In the case of the operad $RB$, the overlaps obtained from the leading monomials $(a_1\cdot a_2)\cdot a_3$, $(a_1\cdot a_3)\cdot a_2$, and $P(P(a_1)\cdot a_2)$ are, in arity $n$,
 $$
((\ldots((a_1\cdot a_{i_2})\cdot a_{i_3})\cdot \ldots )\cdot a_{i_n} \quad\text{and}\quad P(P(P(\ldots P(P(P(a_1)\cdot a_{i_2})\cdot a_{i_3})\ldots )\cdot a_{i_{n-1}})\cdot a_{i_n}),
 $$
for all permutations $i_2,i_3\ldots,i_n$ of integers $2,3,\ldots,n$. It is easy to see that for each of them there exists only one indecomposable covering by relations, so the differential maps such a generator to the space of decomposable elements, and the statement follows. 

Similarly, in the case of the operad $ncRB$, the only overlaps obtained from the leading monomials $(a_1\cdot a_2)\cdot a_3$ and $P(P(a_1)\cdot a_2)$ are, in arity $n$,
 $$
((\ldots((a_1\cdot a_2)\cdot a_3)\ldots )\cdot a_{n-1})\cdot a_n \quad\text{and}\quad P(P(P(\ldots P(P(P(a_1)\cdot a_2)\cdot a_3)\ldots )\cdot a_{n-1})\cdot a_n).
 $$
It is easy to see that for each of them there exists only one indecomposable covering by relations, so the differential maps such a generator to the space of decomposable elements, and the statement follows. 
\end{proof}

\begin{theorem}
We have 
\begin{itemize}
 \item  $$
\dim H^l(\mathbf{B}(RB))(k)=
\begin{cases}
(k-1)!, l=k\ge1,\\  
(k-1)!, l=k+1\ge2.\\   
\end{cases}
 $$   
\item $$
\dim H^l(\mathbf{B}(ncRB))(k)=
\begin{cases}
1, l=k\ge1,\\   
1, l=k+1\ge2.  
\end{cases}
 $$   
\end{itemize}
\end{theorem}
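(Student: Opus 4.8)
The plan is to reduce everything to the monomial versions of $RB$ and $ncRB$, for which the preceding proposition already tells us that the inclusion--exclusion resolution is minimal. By the general principle recalled in the introduction, the bar homology is the homology of the differential induced on the space of generators of any free resolution; since for the two monomial operads this induced differential vanishes, the bar homology of each monomial operad is freely spanned by the generators of its minimal resolution, that is, by the operadic chains described in Theorem~\ref{min-resol-oper}. So the first step is to enumerate these chains by arity and homological degree.

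For this I would reuse the description of overlaps already extracted in the proof of the preceding proposition. In each arity the relation-coverings fall into two families: those obtained by iterating the associativity leading term $(a_1\cdot a_2)\cdot a_3$, and those obtained by iterating the Rota--Baxter leading term $P(P(a_1)\cdot a_2)$ interleaved with products. A direct check shows that these families contribute chains in two consecutive homological degrees, accounting for the two cases $l=k$ and $l=k+1$ in the statement. In the noncommutative case each family has exactly one indecomposable covering in every arity, giving dimension $1$; in the commutative case one must in addition count the admissible leaf labellings of the underlying shuffle trees, and I expect this count to reproduce $\dim\Lie(k)=(k-1)!$, reflecting that the commutative product is governed by the Koszul dual $\Lie$ exactly as the associative product (self-dual, of nonsymmetric dimension $1$) governs $ncRB$. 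Carrying out this bookkeeping establishes the claimed dimensions for the monomial operads.

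The remaining and most delicate step is to pass from the monomial operads to $RB$ and $ncRB$ themselves. Here I would invoke the deformation machinery of Section~\ref{AntiAsExample}: the deformed differential $D$ on $\calA_\calG$ computes $H_\ldot(\calA_\calG,D)\simeq\widetilde{\calO}$, and the bar homology of $\widetilde{\calO}$ is the homology of the map induced by $D$ on the space of generators. Since the associativity relation is homogeneous it contributes no correction, so all corrections come from the single lower-order term $\lambda\,a\cdot b$ of the Rota--Baxter relation, which strictly decreases the number of occurrences of $P$. The goal is to verify that the map induced by $D$ between the two families of generators vanishes, so that the monomial counts survive unchanged; concretely I would run the explicit recursion used for the anti-associative operad and check that every correction lands in the span of decomposables, hence dies on passing to indecomposables.

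The hard part will be precisely this last point: controlling the $\lambda$-correction to the differential. Unlike the monomial differential, $D$ is genuinely nonzero on $\calA_\calG$, and \emph{a priori} it could link a chain of the $P$-family to a chain of the associative family in the adjacent degree; the content of the computation is that after projection to the generators these corrections cancel. This is exactly the phenomenon that fails for the algebra $\sfA$ of Theorem~\ref{BV-like-alg}, where the analogous corrections did \emph{not} cancel and collapsed the homology onto a single diagonal, so one cannot expect vanishing for free and must argue it. If a structural cancellation cannot be read off directly, the fallback is the explicit recursive evaluation of $D$ on the two families, which is feasible because in each arity there are only two chains to track, and which is the same style of computation already carried out for the low-arity maps of the anti-associative operad in Section~\ref{AntiAsExample}.
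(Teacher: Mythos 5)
Your reduction to the monomial operads, the identification of the two families of generators (associativity combs and iterated Rota--Baxter overlaps, sitting in adjacent homological degrees within each arity), and the counts $(k-1)!$ versus $1$ all agree with the paper's proof, so the skeleton is right; your hedge about the commutative count is easily discharged, since the shuffle-tree labellings of a left comb are exactly the permutations $(i_2,\ldots,i_k)$ of $\{2,\ldots,k\}$. The genuine gap is the step you yourself flag as ``the hard part'': showing that the component of the deformed differential mapping the Rota--Baxter family to the associativity family vanishes after projection to indecomposables. Your proposed method --- run the explicit recursion and check, arity by arity --- is not a proof: the statement concerns all arities $k$, and an arity-by-arity evaluation of $D$ never terminates unless one exhibits an invariant supporting an inductive or structural argument. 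You never exhibit such an invariant, and your own comparison with the algebra $\sfA$ (where the analogous component is nonzero) shows the vanishing cannot be taken for granted.

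The missing idea is a one-line grading argument, and it is exactly how the paper concludes. Write the Rota--Baxter relation as the rewriting rule sending $P(P(a)\cdot b)$ to $P(a)\cdot P(b)-P(a\cdot P(b))-\lambda P(a\cdot b)$: every monomial on the right contains at least one $P$ (note there are three correction terms, not only the $\lambda$-term as you assert). Consequently every term produced from a Rota--Baxter-family generator by the deformed differential and homotopies --- each such term arises by repeatedly replacing a local pattern containing two nested $P$'s by patterns containing at least one $P$ --- has $P$-degree at least $1$, whereas the associativity-family generators are pure product combs of $P$-degree $0$. Therefore the projection of the differential of any Rota--Baxter-family generator onto the span of the associativity family is identically zero, in every arity at once, and the monomial counts survive the deformation. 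Your observation that the $\lambda$-correction ``strictly decreases the number of occurrences of $P$'' is precisely why this argument is needed: the $P$-degree can drop under $D$, but it can never reach $0$, because applying the rule requires two $P$'s and each replacement retains at least one. Finally, your claim that the associativity relation ``contributes no correction'' is false --- it has the lower term $a_1\cdot(a_2\cdot a_3)$ --- but this is harmless for a different reason: within a fixed arity there are no generators one homological degree below the associativity family, so corrections to $D$ on that family cannot affect the induced differential.
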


\begin{proof}
In both cases, the subspace of generators of the free resolution splits into two parts: the part obtained as overlaps of the leading terms of the associativity relations, and the part obtained as overlaps of the leading term of the Rota--Baxter relation with itself. In arity $k$, the former are all of homological degree $k-1$, while the latter --- of homological degree~$k$. This means that when we compute the homology of the differential of our resolution restricted to the space of generators, the only cancellations can happen if some of the elements resolving the associativity relation appear as differentials of some elements resolving the Rota--Baxter relation. However, it is clear that all the terms appearing in the formulas for the latter are of degree at least~$1$ in~$P$, so no cancellations are impossible.
\end{proof}

In addition to the bar homology computation, one can ask for explicit formulas for differentials in the free resolutions. It is not difficult to write down formulas for small arities (see the example below), but in general compact formulas are yet to be found. We expect that they incorporate the Spitzer's identity and its noncommutative analogue \cite{EF-GB-P}. However, the following statement is easy to check.

\begin{corollary}
\begin{itemize}
 \item The minimal model $RB_\infty$ for the operad $RB$ is a quasi-free operad whose space of generators has a $(k-1)!$-dimensional space of generators of homological degree~$(k-2)$ in each arity~$k\ge2$, and a $(k-1)!$-dimensional space of generators of homological degree~$k-1$ in each arity~$k\ge1$.
 \item The minimal model $ncRB_\infty$ for the operad $ncRB$ is a quasi-free operad generated by operations $\mu_k (k\ge2)$ of homological degree~$k-2$, and $\nu_l (l\ge1)$ of homological degree~$l-1$. 
\end{itemize}
\end{corollary}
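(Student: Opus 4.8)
The plan is to read the statement off the preceding theorem, using the general principle recalled in the introduction: the homology of the differential induced on the indecomposables of any free resolution of $\calO$ agrees with the bar homology $H_\ldot(\mathbf{B}(\calO))$, and for a \emph{minimal} model this induced differential vanishes, so the space of generators of a minimal model is canonically identified with the reduced bar homology $\overline{H_\ldot(\mathbf{B}(\calO))}$, arity by arity, up to the standard (cobar) desuspension.

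First I would record the existence of a minimal model. Since we work over a field of characteristic zero, the bar--cobar resolution $\Omega(\mathbf{B}(\calO))\to\calO$ is a quasi-free resolution of $\calO$, and the minimal model theorem for dg operads (homotopy transfer, as in the references of the introduction) produces a quasi-free operad $\calO_\infty=(\calF(\calV),D)$ with decomposable differential together with a quasi-isomorphism $\calO_\infty\to\calO$. By minimality the differential induced on $\calV$ is zero, whence $\calV\cong\overline{H_\ldot(\mathbf{B}(\calO))}$ with the arity preserved and the homological degree lowered by one relative to the degrees in which the resolution classes were found.

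It then remains to feed in the computation of the preceding theorem for $\calO=RB$ and $\calO=ncRB$. Its proof exhibits $\overline{H_\ldot(\mathbf{B}(\calO))}$, in arity $k$, as the direct sum of two families: the classes coming from the overlaps of the associativity relations, which live in homological degree $k-1$, and the classes coming from the self-overlaps of the Rota--Baxter relation, which live in homological degree $k$. For $RB$ each family has dimension $(k-1)!$, and for $ncRB$ each is one-dimensional. Applying the desuspension (subtracting one from the homological degree, keeping the arity) turns the associativity family into generators of degree $k-2$ in each arity $k\ge2$ and the Rota--Baxter family into generators of degree $k-1$ in each arity $k\ge1$; this is exactly the assertion for $RB$, and for $ncRB$ it identifies the two one-dimensional families as the higher associative products $\mu_k$ of degree $k-2$ and the higher Rota--Baxter operations $\nu_l$ of degree $l-1$ (with $\nu_1=P$ in arity $1$).

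The one point that needs care --- and the main potential obstacle --- is the bookkeeping of degrees and arities together with the verification that the two families really remain distinct in homology, so that no generator is lost or merged. Both are already handled by the preceding theorem: its proof shows that every term occurring in the differentials of the Rota--Baxter classes has $P$-degree at least one, so there can be no cancellation against the associativity classes, and the two families indeed contribute independently to $\calV$. Finally, the fact that $D$ carries no induced differential on $\calV$ is guaranteed by the earlier proposition, according to which the resolution of the monomial version is already minimal, the passage to general relations only adding lower terms and hence never producing a linear part in the differential.
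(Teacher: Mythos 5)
Your main line of argument is correct and is essentially the paper's (implicit) one: the corollary is read off from the preceding theorem by identifying the generators of a minimal model with the desuspended bar homology, which the theorem computes as two families (associativity-type and Rota--Baxter-type) of the stated dimensions and degrees, kept independent by the observation that every term in the differential of a Rota--Baxter class has positive degree in~$P$.

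However, your closing sentence contains a claim that is false in general and should be deleted: passing from monomial to general relations does \emph{not} merely add terms that are "never a linear part in the differential." Lower terms, in the sense of the paper's partial order on basis elements of $\calA_\calG$, can perfectly well be linear in the generators, so a minimal resolution of the monomial replacement need not deform to a minimal resolution. The paper's own treatment of $BV$ is a counterexample: there the monomial resolution is minimal, yet the deformed differential sends the degree-$2$ generator $\Delta^2(a_1\bullet(a_2\bullet a_3))$ to the degree-$1$ generator $\Delta(\Delta(a_1\bullet a_2)\bullet a_3)$ plus other terms, killing a would-be homology class; the algebra $\sfA$ of Section~\ref{like-BV} exhibits the same phenomenon. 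What actually secures minimality for $RB$ and $ncRB$ is precisely the cancellation analysis you give one sentence earlier: a linear part of $D$ would have to map a Rota--Baxter-family generator of arity $k$ onto an associativity-family generator one degree below, and this is impossible because all terms of the deformed differential of a Rota--Baxter class involve $P$, while the associativity classes are $P$-free. With the redundant (and incorrect) general principle removed, your proof stands as written.
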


Let us conclude this section with formulas for low arities differentials in $ncRB_\infty$, to give the reader a flavour of what sort of formulas to expect.       

\begin{example}\label{differential}
We have
 $$
d\mu_2=0,
 $$
 $$
d\mu_3=\mu_2(\mu_2(\textrm{-},\textrm{-}),\textrm{-})-\mu_2(\textrm{-},\mu_2(\textrm{-},\textrm{-})),
 $$
 $$
d\nu_1=0,
 $$
 $$
d\nu_2=P(\mu_2(P(\textrm{-}),\textrm{-}))+P(\mu_2(\textrm{-},P(\textrm{-})))-\mu_2(P(\textrm{-}),P(\textrm{-}))+\lambda P(\mu_2(\textrm{-},\textrm{-})),
 $$ 
\begin{multline*}
d\nu_3=
\nu_2(\mu_2(P(\textrm{-}),\textrm{-}),\textrm{-})-\nu_2(\textrm{-},\mu_2(P(\textrm{-}),\textrm{-}))+\nu_2(\mu_2(\textrm{-},P(\textrm{-})),\textrm{-})-\nu_2(\textrm{-},\mu_2(\textrm{-},P(\textrm{-})))+\\ 
+P(\mu_2(\nu_2(\textrm{-},\textrm{-}),\textrm{-})+\mu_2(\textrm{-},\nu_2(\textrm{-},\textrm{-})))+\mu_2(\nu_2(\textrm{-},\textrm{-}),P(\textrm{-}))-\mu_2(P(\textrm{-}),\nu_2(\textrm{-},\textrm{-}))+\\
+\mu_3(P(\textrm{-}),P(\textrm{-}),P(\textrm{-}))+\mu_3(P(\textrm{-}),P(\textrm{-}),\textrm{-})-P(\mu_3(P(\textrm{-}),\textrm{-},P(\textrm{-}))+\mu_3(\textrm{-},P(\textrm{-}),P(\textrm{-})))+\\
+\lambda\left[\nu_2(\mu_2(\textrm{-},\textrm{-}),\textrm{-})-\nu_2(\textrm{-},\mu_2(\textrm{-},\textrm{-}))-\right.\\ \left.-P(\mu_3(P(\textrm{-}),\textrm{-},\textrm{-})+\mu_3(\textrm{-},P(\textrm{-}),\textrm{-})+\mu_3(\textrm{-},\textrm{-},P(\textrm{-})))\right]-\\-\lambda^2P(\mu_3(\textrm{-},\textrm{-},\textrm{-}))
\end{multline*}
\end{example}

\subsection{The operad $BV$ and hypercommutative algebras}\label{BV}

The main goal of this section is to explain how our results can be used to study the operad~$BV$ of Batalin--Vilkovisky algebras. The key result below (Theorem~\ref{BV=Grav+delta}) is among those announced by Drummond-Cole and Vallette earlier \cite{DCVOberwolfach,ValBV} (see also~\cite{DCV}); our proofs are based on methods entirely different from theirs. 

\subsubsection{The operad $BV$ and its Gr\"obner basis.}

Batalin--Vilkovisky algebras show up in various questions of mathematical physics. In \cite{GTV}, a cofibrant resolution for the corresponding operad was presented. However, that resolution is a little bit more that minimal. In this section, we present a minimal resolution for this operad in the shuffle category. The operad~$BV$, as defined in most sources, is an operad with quadratic--linear relations: the odd Lie bracket can be expressed in terms of the product and the unary operator. However, alternatively one can say that a $BV$-algebra is a dg-commutative algebra with a unary operator $\Delta$ which is a differential operator of order at most~$2$. This definition of a $BV$-algebra  is certainly not new, see, e.~g., \cite{Getzler}. With this presentation, the corresponding operad becomes an operad with homogeneous relations (of degrees~$2$ and~$3$). Our choice of degrees and signs is taken from~\cite{GTV} where it is explained how to translate between this convention and other popular definitions of $BV$-algebras.

\begin{definition}[Batalin--Vilkovisky algebras with homogeneous relations]
A \emph{Batalin-Vilkovisky algebra}, or \emph{$BV$-algebra} for short,  is a differential graded vector space $(A, d_A)$ endowed with
\begin{itemize}
\item[-] a symmetric binary product $\bullet$ of degree~$0$,
\item[-]  a unary operator $\Delta$ of degree~$+1$,
\end{itemize}
such that $(A,d_A,\Delta)$ is a bicomplex, $d_A$ is a derivation with respect to the product, and such that
\begin{itemize}
\item[-] the product $\bullet$ is associative,
\item[-] the operator $\Delta$ satisfies $\Delta^2=0$,
\item[-] the operations satisfy the cubic identity 
\begin{equation}\label{BV3}
 \Delta(\,\textrm{-}\,\bullet\,\textrm{-}\,\bullet\,\textrm{-}\,) =
((\Delta(\,\textrm{-}\,\bullet\,\textrm{-})\,\bullet\,\textrm{-}\,)-(\Delta(\,\textrm{-}\,)\,\bullet\,\textrm{-}\,\bullet\,\textrm{-}\,)).(1+(123)+(132)),
\end{equation}
\end{itemize}
\end{definition}

In what follows, it is very helpful to have in mind the computations of Section~\ref{like-BV}: phenomena discovered there for an associative algebra $\k\langle x,y\mid y^2, x^2y+xyx+yx^2\rangle$ are very similar to the phenomena we shall observe for the operad $BV$. Informally, one should think of the generator $y$ of the abovementioned algebra as of an analogue of the Batalin--Vilkovisky operator $\Delta$, and of the generator $x$ as of an analogue of the binary product $\textrm{-}\bullet\textrm{-}$.

Let us consider the ordering of the free operad where we first compare lexicographically the operations on the paths from the root to leaves, and then the planar permutations of leaves; we assume that $\Delta>\bullet$. 

\begin{proposition}
The above relations together with the degree~$4$ relation
\begin{equation}\label{BV4}
(\Delta(\,\textrm{-}\,\bullet\,\Delta(\,\textrm{-}\,\bullet\,\textrm{-}))-
\Delta(\Delta(\,\textrm{-}\,)\,\bullet\,\textrm{-}\,\bullet\,\textrm{-}\,)).(1+(123)+(132))=0
\end{equation}
form a Gr\"obner basis of relations for the operad of BV-algebras.
\end{proposition}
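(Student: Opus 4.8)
The plan is to verify that the given set of relations---the associativity of $\bullet$, the condition $\Delta^2=0$, the cubic relation~\eqref{BV3}, and the degree~$4$ relation~\eqref{BV4}---forms a Gr\"obner basis by running the Buchberger algorithm for shuffle operads from~\cite{DK} and checking that every S-polynomial reduces to zero. The Diamond Lemma for operads then guarantees that no further elements need to be adjoined. First I would identify the leading monomials of the three defining relations under the chosen path-lexicographic ordering (with $\Delta>\bullet$): the associativity relation has leading term $(\,\textrm{-}\,\bullet\,\textrm{-}\,)\bullet\,\textrm{-}$, the relation $\Delta^2=0$ has leading term $\Delta(\Delta(\,\textrm{-}\,))$, and the cubic BV-relation~\eqref{BV3} has as its leading term the monomial $\Delta(\Delta(\,\textrm{-}\,)\bullet\,\textrm{-}\,\bullet\,\textrm{-}\,)$ (the highest planar tree monomial occurring in the symmetrised expression). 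The stated degree~$4$ element~\eqref{BV4} must then arise as the reduction of a genuinely new S-polynomial, so its leading monomial should be the degree~$4$ tree $\Delta(\Delta(\,\textrm{-}\,)\bullet\,\Delta(\,\textrm{-}\,\bullet\,\textrm{-}\,))$ or similar.

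\emph{Next} I would enumerate the small common multiples (critical overlaps) of these leading monomials. The associativity leading term overlaps with itself exactly as in the classical associative case, and those S-polynomials reduce to zero because associativity alone is already a Gr\"obner basis for the suboperad generated by~$\bullet$. The interesting overlaps involve~$\Delta$: one must consider how $\Delta(\Delta(\,\textrm{-}\,))$ and the leading term of~\eqref{BV3} overlap with each other and with themselves. The self-overlap of the cubic relation's leading monomial, together with its overlap against $\Delta^2=0$ and against associativity, is precisely where a new obstruction can appear; reducing that S-polynomial should produce the degree~$4$ relation~\eqref{BV4}, exactly mirroring the instructional computation in Section~\ref{like-BV} where the degree~$2$ and degree~$3$ relations of $\sfA$ forced a degree~$4$ relation $xyxy-yxyx$ into the Gr\"obner basis. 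I would carry out these reductions explicitly, using the defining relations to rewrite each non-leading monomial, and verify that after adjoining~\eqref{BV4} everything reduces to zero.

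\emph{Finally}, having added~\eqref{BV4}, I would confirm that it introduces no further critical pairs: its overlaps with the lower-degree leading monomials, and its self-overlap, must all reduce to zero, so that the algorithm terminates. The symmetrisation by $(1+(123)+(132))$ appearing in~\eqref{BV3} and~\eqref{BV4} is a bookkeeping device reflecting the passage between the symmetric and shuffle pictures; in the shuffle category each such expression is an honest element of the free shuffle operad, and I would track its individual tree-monomial summands throughout.

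\emph{The main obstacle} I anticipate is the combinatorial bulk of the reductions for the degree~$4$ and degree~$5$ overlaps: the symmetrised cubic and quartic relations expand into several tree monomials each, and the rewriting must be performed with care to track Koszul signs (the operator $\Delta$ carries odd degree, so each rearrangement of internal vertices contributes a sign) and to respect the shuffle constraints on leaf labellings. The genuinely nontrivial point is showing that \emph{no} relation of degree~$5$ or higher is needed---that is, that the self-overlap of~\eqref{BV4} and its overlaps with the cubic relation all reduce completely. This parallels exactly the situation for $\sfA$, where the Gr\"obner basis stabilises at degree~$4$, and I would expect the same stabilisation here, the degree~$4$ relation being the last genuinely new generator.
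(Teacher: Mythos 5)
Your overall strategy---running the operadic Buchberger algorithm and checking that every S-polynomial reduces to zero---is a legitimate route in principle, but the proposal has concrete errors and leaves the essential step undone. First, the leading monomials are misidentified. Under the chosen ordering ($\Delta>\bullet$, comparing root-to-leaf paths lexicographically), any tree monomial with $\Delta$ at the root dominates every monomial with $\bullet$ at the root, so the leading term of the cubic relation~\eqref{BV3} is its left-hand side $\Delta(a_1\bullet(a_2\bullet a_3))$, not $\Delta(\Delta(a_1)\bullet a_2\bullet a_3)$: the latter is a degree-$4$ monomial that does not even occur in~\eqref{BV3} (it is one of the monomials of~\eqref{BV4}). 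Likewise the leading term of~\eqref{BV4} is $\Delta(\Delta(a_1\bullet a_2)\bullet a_3)$, not the degree-$5$ tree $\Delta(\Delta(a_1)\bullet\Delta(a_2\bullet a_3))$ you suggest. With the wrong leading terms the overlap analysis goes astray: the cubic leading term has \emph{no} self-overlap (its only $\Delta$-vertex is its root), and the S-polynomial producing~\eqref{BV4} comes from the unique small common multiple $\Delta^2(a_1\bullet(a_2\bullet a_3))$ of $\Delta^2(a_1)$ with $\Delta(a_1\bullet(a_2\bullet a_3))$---exactly mirroring the overlap of $y^2$ with $x^2y$ for the algebra $\sfA$ in Section~\ref{like-BV}.

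Second, and more fundamentally, the genuinely hard part of your plan---verifying that \emph{all} remaining S-polynomials, in particular those involving~\eqref{BV4}, reduce to zero so that the basis stabilises at degree~$4$---is precisely what you leave as an expectation; that expectation \emph{is} the content of the proposition, so nothing has been proved. The paper avoids this computational morass by a counting argument absent from your proposal: every S-polynomial of the candidate basis lives in arity at most~$4$, and $\dim BV(n)=2^nn!$ is known from Getzler, so it suffices to check that the number of arity-$n$ tree monomials not divisible by any of the five leading terms equals $2^nn!$ for $n\le 4$, a finite check done by hand or by computer. If these counts match, the normal monomials form a basis of the quotient in those arities, hence any S-polynomial with a nonzero remainder would force the count to exceed the dimension, a contradiction; the Diamond Lemma then closes the argument. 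Without either this shortcut or the explicit reductions actually carried out (with the correct leading terms), your proposal does not establish the statement.
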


\begin{proof}
Here and below we use the language of operations, as opposed the language of tree monomials; our operations reflect the structure of the corresponding tree monomials in the free shuffle operad. For each~$i$, the argument~$a_i$ of an operation corresponds to the leaf~$i$ of the corresponding tree monomial. 

With respect to our ordering, the leading monomials of our original relations are $(a_1\bullet a_2)\bullet a_3$, $(a_1\bullet a_3)\bullet a_2$, $\Delta^2(a_1)$, and $\Delta(a_1\bullet(a_2\bullet a_3))$. The only small common multiple of $\Delta^2(a_1)$ and $\Delta(a_1\bullet(a_2\bullet a_3))$ gives a nontrivial S-polynomial which, is precisely the relation~\eqref{BV4}. The leading term of that relation is $\Delta(\Delta(a_1\bullet a_2) \bullet a_3)$. 

It is well known that $\dim BV(n)=2^nn!$ \cite{Getzler}, so to verify that our relations form a Gr\"obner basis, it is sufficient to show that the restrictions imposed by these leading monomials are strong enough, that is that the number of arity~$n$ tree monomials that are not divisible by any of these is equal to $2^nn!$. Moreover it is sufficient to check that for $n\le 4$, since all S-polynomials of our relations will be elements of arity at most~$4$. This can be easily checked by hand, or by a computer program~\cite{DVJ}.
\end{proof}

\subsubsection{Bar homology of the operad~$BV$.}

Let us denote by~$\calG$ the Gr\"obner basis from the previous section.

\begin{proposition}
For the monomial version of $BV$, the resolution~$\calA_\calG$ from Section~\ref{oper_monom} is minimal, that is the differential induced on the space of generators is zero. 
\end{proposition}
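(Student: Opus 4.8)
The plan is to read off minimality directly from the combinatorial shape of the five leading monomials of $\calG$, in exact parallel with the associative model of Section~\ref{like-BV}. Recall first what ``minimal'' demands here. By the construction of Section~\ref{oper_monom}, a basis element $T\otimes S_1\wedge\cdots\wedge S_q$ of $\calA_\calG$ is a free generator exactly when the marked relation-divisors $S_1,\dots,S_q$ form an indecomposable covering of $T$, that is, when every internal edge of the underlying tree of $T$ lies inside at least one $S_i$; if some internal edge is uncovered, the element splits as an operadic composite and hence represents $0$ in the space of generators $(\calA_\calG)_{ab}$. Since $d$ only omits wedge factors, the $l$-th summand $T\otimes S_1\wedge\cdots\wedge\hat S_l\wedge\cdots\wedge S_q$ of $d(T\otimes S_1\wedge\cdots\wedge S_q)$ survives in $(\calA_\calG)_{ab}$ precisely when the remaining divisors $\{S_i\}_{i\ne l}$ still cover every internal edge. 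Thus the induced differential vanishes — i.e. the resolution is minimal — if and only if in every indecomposable covering each $S_l$ is the \emph{sole} divisor covering some internal edge of $T$; I will call such a covering \emph{irredundant}. The whole task reduces to showing that for the monomial $BV$ every indecomposable covering is irredundant, which will moreover identify the generators with the classes described in Theorem~\ref{min-resol-oper}.

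I would prove this by classifying how two copies of leading monomials can be linked. The leading monomials of $\calG$ are the two associativity words $(a_1\bullet a_2)\bullet a_3$ and $(a_1\bullet a_3)\bullet a_2$, the square $\Delta^2(a_1)$, the cubic $\Delta(a_1\bullet(a_2\bullet a_3))$, and the degree-four word $\Delta(\Delta(a_1\bullet a_2)\bullet a_3)$. For each ordered pair I would list the tree monomials divisible by both in which the two copies share a vertex, and record which internal edges each copy covers. The expected picture mirrors $\sfA$, where the monomial relations $y^2$, $x^2y$, $xyxy$ link only neighbour-to-neighbour: the operators $\Delta$ stack into towers (consecutive copies of $\Delta^2(a_1)$ overlapping in a single $\Delta$, as the $y$'s do), the products $\bullet$ stack into associativity combs overlapping in a single $\bullet$, and the cubic and degree-four relations attach to these towers the way $x^2y$ and $xyxy$ attach in $\sfA$. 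The key point is that in every such linkage each relation keeps at least one internal edge that no overlapping copy contains: two copies of $\Delta(\Delta(a_1\bullet a_2)\bullet a_3)$ may well share their middle $\Delta$-$\bullet$ edge, exactly as the two copies of $xyxy$ inside $xyxyxy$ share a junction, yet each still owns a private edge at its far end. Granting this, any indecomposable covering is a chain in which only neighbours overlap and every relation has a private internal edge; omitting any relation then uncovers that edge and makes the summand decomposable, so the induced differential is $0$. This is the operadic analogue of Proposition~\ref{BV-like-alg-monomial}.

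The main obstacle is the bookkeeping for the degree-four relation $\Delta(\Delta(a_1\bullet a_2)\bullet a_3)$, whose leading monomial is the longest and which can meet $\Delta^2(a_1)$, the cubic, either associativity word, and a second copy of itself. What must be excluded is any configuration in which some relation's internal edges are \emph{all} also covered by its neighbours, since that is precisely the situation that would resurrect a term of the induced differential and destroy minimality. Concretely I would check, for the finitely many genuine overlaps this relation admits, that each overlap is confined to one end of its spine $\Delta$-$\bullet$-$\Delta$-$\bullet$, so that the opposite edge stays private; the fact that no leading monomial of the (reduced) Gr\"obner basis divides another already rules out the degenerate overlaps and keeps this case analysis finite. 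Once every overlap is seen to leave a private edge, irredundancy holds in all arities and the minimality of $\calA_\calG$ follows.
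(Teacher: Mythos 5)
Your opening reduction is correct and is exactly the right way to read minimality off the construction of Section~\ref{oper_monom}: the free generators of $\calA_\calG$ are the elements $T\otimes S_1\wedge\cdots\wedge S_q$ whose divisors form an indecomposable covering of $T$, and since $d$ only omits wedge factors, the induced differential on the space of generators vanishes if and only if, in every such covering, each $S_l$ covers some internal edge that no other divisor covers. Note that this is a genuinely different route from the paper's proof: there one lists all tree monomials admitting an indecomposable covering by the five leading terms ($\Delta$-towers, Lie monomials, and the families \eqref{LT3} and \eqref{LT4}) and observes that each such monomial carries exactly \emph{one} indecomposable covering; uniqueness then forces every summand of the differential of a generator into $(\calA_\calG)_+^2$. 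Your local ``private edge'' argument would bypass that enumeration entirely (the paper, however, reuses its enumeration in the proof of Theorem~\ref{BVinf}, so the extra work is not wasted there).

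The gap is in the passage from pairwise privacy to collective privacy, and the resolution you propose is incorrect as stated. In a spine monomial of type \eqref{LT4}, a copy of $\Delta(\Delta(a_1\bullet a_2)\bullet a_3)$ sitting in the middle of the spine has edge-sharing neighbours at \emph{both} ends: the quartic copy above it shares its top $\Delta$-$\bullet$ edge, and the quartic copy below it (or a cubic copy rooted at its inner $\Delta$) shares its bottom $\Delta$-$\bullet$ edge. For such a copy the claim ``the opposite edge stays private'' is false for every one of its overlaps; both end edges are covered by others. (Also, two overlapping quartic copies never share their \emph{middle} edges: the shared edge is the bottom edge of one and the top edge of the other, both of type $\Delta$-$\bullet$, whereas the middle edge has type $\bullet$-$\Delta$.) What saves your strategy is an edge-type observation rather than an overlap-by-overlap one: among the five leading monomials, only the quartic one contains an edge with parent $\bullet$ and child $\Delta$, and a quartic divisor is uniquely reconstructed from its $\bullet$-$\Delta$ edge (adjoin the parent of the $\bullet$ and the unique input of the $\Delta$); hence the middle edge of every quartic copy is covered by no other divisor whatsoever, in any covering. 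Likewise only the cubic leading term contains a $\bullet$-$\bullet$ edge whose lower vertex is the \emph{second} input of the upper one, so the lower edge of every cubic copy is collectively private --- a case you must also treat, since in $\Delta(\Delta(\lambda_1\bullet\lambda_2)\bullet\lambda_3)$ with $\lambda_3$ nontrivial a cubic copy shares its top edge with the quartic copy, a configuration your sketch does not mention. Together with the trivial cases of $\Delta^2$ and the associativity terms (each consists of a single edge which determines the divisor), this gives every divisor in every indecomposable covering a collectively private edge, and minimality follows. So the plan is sound, but the finite check must be organised by edge types, not by pairs of overlapping relations.
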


\begin{proof}
Let us describe explicitly the space of generators, that is possible indecomposable coverings of monomials by leading terms of relations (all monomials below are chosen from the basis of the free shuffle operad, so the correct ordering of subtrees is assumed).
These are
\begin{itemize}
 \item[-] all monomials $\Delta^{k}(a_1)$, $k\ge2$ (covered by several copies of $\Delta^2(a_1)$),
 \item[-] all ``Lie monomials''
\begin{equation}\label{Lie}
\lambda=(\ldots((a_1\bullet a_{k_2})\bullet a_{k_3})\bullet \ldots )\bullet a_{k_n}
\end{equation}
where $(k_2,\ldots,k_n)$ is a permutation of numbers $2$, \ldots, $n$ (only the leading terms $(a_1\bullet a_2)\bullet a_3$ and $(a_1\bullet a_3)\bullet a_2$ are used in the covering),
 \item[-] all the monomials
\begin{equation}\label{LT3}
\Delta^k(\Delta(\lambda_1\bullet(\lambda_2\bullet\lambda_3)))
\end{equation}
where $k\ge1$, each~$\lambda_i$ is a Lie monomial as described above (several copies of $\Delta^2$, the leading term of degree~$3$, and several Lie monomials are used), 
 \item[-] all monomials
\begin{equation}\label{LT4}
\Delta^k(\Delta(\ldots\Delta(\Delta(\lambda_1\bullet \lambda_{2})\bullet \lambda_{3})\bullet \ldots )\bullet \lambda_{n}) 
\end{equation}
where $k\ge 0$, $n\ge 3$, and $\lambda_i$ are Lie monomials (several copies of all leading terms are used, including at least one copy of the degree~$4$ leading term).
\end{itemize}
This is a complete list of tree monomials $T$ for which $\calA_\calG^T$ is nonzero in positive homological degrees. It is easy to see that for each of them there exists only one indecomposable covering by relations, that is only one generator of $\calA_\calG$ of shape $T$. Consequently, the differential maps such a generator to $(\calA_\calG)_+^2$, so the differential induced on generators is identically zero.
\end{proof}

The resolution of the operad~$BV$ which one can derive by our methods from this one is quite small (in particular, smaller than the one of~\cite{GTV}) but still not minimal. However, we now have enough information to compute the bar homology of the operad~$BV$.

\begin{theorem}\label{BVinf}
The basis of $H(\mathbf{B}(BV))$ is formed by monomials 
 $$
\Delta^k(a_1), \quad k\ge 1,
 $$
and all monomials of the form
\begin{equation}\label{BVbasis}
\underbrace{\Delta(\ldots\Delta(\Delta(}_{n-1\textrm{ times }}\lambda_1\bullet \lambda_{2})\bullet \ldots )\bullet (\lambda_{n}\bullet a_j)), \quad n\ge 1
\end{equation}
from the monomial resolution discussed above. Here all $\lambda_i$ are Lie monomials.
\end{theorem}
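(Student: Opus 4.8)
The plan is to run, for the operad $BV$, the same computation that established Theorem~\ref{BV-like-alg} for the algebra~$\sfA$, exploiting the fact just proved that the monomial resolution $\calA_\calG$ is minimal. Because of minimality, the bar homology $H(\mathbf{B}(BV))$ is the homology of the deformed differential $\bar D$ induced on the space of indecomposables of $(\calA_\calG,D)$, and that space is spanned exactly by the four families of generators exhibited in the preceding proposition: the towers $\Delta^k(a_1)$, the Lie monomials $\lambda$, the degree-$3$ overlaps~\eqref{LT3}, and the degree-$4$ overlaps~\eqref{LT4}. The whole problem thus reduces to determining $\bar D$ on these generators, and the informal dictionary of Section~\ref{like-BV} ($\Delta\leftrightarrow y$, $\bullet\leftrightarrow x$, with \eqref{BV3} in the role of $x^2y+xyx+yx^2$ and \eqref{BV4} in the role of the quartic Gr\"obner element) tells us what to expect.

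First I would compute the leading terms of the deformed differential $D$ on the two ``large'' families~\eqref{LT3} and~\eqref{LT4}, in complete analogy with the two displayed formulas in the proof of Theorem~\ref{BV-like-alg}. Following the recursive construction of $D$ in Section~\ref{general}, each such leading term is produced by reducing the underlying tree monomial modulo $\calG$, i.e. by feeding the lower terms of \eqref{BV3} and \eqref{BV4} into the homotopy. I expect a leading-term identity of the schematic form
$$
D\bigl(\Delta^{k}(\Delta(\lambda_1\bullet(\lambda_2\bullet\lambda_3)))\bigr)=\bigl(\text{type-(\ref{LT4}) monomial}\bigr)+\text{lower terms}\qquad(k\ge1),
$$
together with an analogous identity sending each generator of type~\eqref{LT4} that carries a superfluous outer $\Delta$ to a type-\eqref{LT4} generator with one fewer $\Delta$, modulo lower terms.

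Granting these two formulas, the extraction of homology is a direct transcription of the argument for~$\sfA$. The generators of type~\eqref{LT3} (all of which have $k\ge1$) and the generators of type~\eqref{LT4} carrying an extra $\Delta$ fail to be cocycles, while their images account for all remaining type-\eqref{LT4} monomials; after cancellation the only classes left are the towers $\Delta^k(a_1)$ with $k\ge1$ --- which are genuine cocycles whose differentials are swallowed by the augmentation, exactly as in Theorem~\ref{BV-like-alg}, and which are never boundaries --- and the ``bottom layer'' monomials~\eqref{BVbasis}, i.e. the type-\eqref{LT4} monomials with no extra $\Delta$ and with last factor of the shape $\lambda_n\bullet a_j$. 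This is precisely the asserted basis.

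The main obstacle is the honest computation of those two leading-term formulas. For the algebra~$\sfA$ each relevant multidegree is one-dimensional, so the pairing of non-surviving generators is immediate; here every arity-and-degree component is spanned by many shuffle tree monomials, indexed by the Lie monomials $\lambda_i$ and by the shuffle type of their grafting, and one must carry the symmetrisation $(1+(123)+(132))$ of \eqref{BV3} and the quartic relation~\eqref{BV4} through the recursion for $D$. The delicate point is to verify that $\bar D$ pairs the non-surviving generators \emph{bijectively}, so that the surviving dimensions match the claimed basis with no accidental extra (co)cycles; once the single leading-term formula for each of~\eqref{LT3} and~\eqref{LT4} is secured, the rest follows the algebra computation verbatim.
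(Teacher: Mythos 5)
Your overall strategy is exactly the paper's: exploit minimality of the monomial resolution, deform its differential, and read off $H(\mathbf{B}(BV))$ from the induced differential $\bar D$ on the space of generators. The gap is in the pairing you posit, and since you yourself single out bijectivity of the pairing as the delicate point, this is a genuine gap rather than a deferred routine check. The deformed differential preserves arity and the number of $\Delta$'s (the Gr\"obner basis \eqref{BV3}, \eqref{BV4} is homogeneous for this bigrading) and lowers homological degree by one. Hence a type-\eqref{LT4} generator with a superfluous outer $\Delta$ but with a \emph{single leaf} as its last factor, such as $\Delta^2(\Delta(a_1\bullet a_2)\bullet a_3)$ (arity $3$, three $\Delta$'s, homological degree $2$), cannot be sent to ``a type-\eqref{LT4} generator with one fewer $\Delta$'': there is no generator of arity $3$ with three $\Delta$'s in homological degree $1$ at all. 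Such generators are cocycles for bigrading reasons; they disappear from homology because they are \emph{targets} of $\bar D$, not sources. Relatedly, your claim that the images ``account for all remaining type-\eqref{LT4} monomials'' contradicts your own conclusion: if that were literally true, the monomials \eqref{BVbasis} would be boundaries and could not survive. The dichotomy that actually makes the count work --- and this is precisely the content of the paper's key displayed formula --- involves the shape of the last factor jointly with the extra $\Delta$'s: the sources are the generators with at least one extra $\Delta$ \emph{and} compound last factor $\lambda_n\bullet a_j$ (all of \eqref{LT3} is of this form), and the leading term of their differential is obtained by \emph{splitting} that factor, trading the topmost degree-$3$ pattern $\Delta(\,\cdot\,\bullet(\lambda_n\bullet a_j))$ for the degree-$4$ pattern $\Delta(\Delta(\,\cdot\,\bullet\lambda_n)\bullet a_j)$. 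The image thus has one fewer outer $\Delta$, one more link in the degree-$4$ chain, and the single leaf $a_j$ as last factor; this assignment is a bijection from $\{$at least one extra $\Delta$, compound last factor$\}$ onto $\{$any number of extra $\Delta$'s, single-leaf last factor$\}$, and it is exactly this bijection that leaves the towers $\Delta^k(a_1)$ and the monomials \eqref{BVbasis} as survivors.

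Even with the pairing corrected, two steps you omit would still be required. First, the leading-term formula itself must be established from the recursive construction of $D$ out of the Gr\"obner basis; in the paper this is the displayed identity ``we have for $k\ge1$\ldots'' (whose $k=1$ case also shows that the \eqref{BVbasis} monomials are cocycles modulo decomposables), and your proposal explicitly defers this computation, which is the entire content of the proof. Second, after retaining only leading terms one must rule out further cancellations caused by the lower terms; the paper closes this by observing that all surviving classes of a given arity $m$ are concentrated in the single homological degree $m-2$, so no further differential can act between them. Your appeal to repeating the computation for $\sfA$ ``verbatim'' hides this step: for $\sfA$ it holds silently because the surviving classes in a fixed homological degree have distinct multidegrees in $x$ and $y$, whereas for $BV$ the degree-concentration argument has to be made explicitly.
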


\begin{proof}
First of all, let us notice that since $\Omega(\mathbf{B}(BV))$, a free operad generated by $\mathbf{B}(BV)[-1]$, provides a resolution for $BV$, the space $H(\mathbf{B}(BV))[-1]$ is the space of generators of the minimal free resolution, and we shall study the resolution provided by our methods.

Similarly to how things work for the operad~$\widetilde{\As}$ in Section~\ref{AntiAsExample}, it is easy to check that the element $\Delta(\Delta(a_1\bullet a_2) \bullet a_3)$ that corresponds to the leading term of the only contributing S-polynomial will be killed by the differential of the element $\Delta^2(a_1\bullet(a_2\bullet a_3))$ (covered by two leading terms $\Delta^2(a_1)$ and $\Delta(a_1\bullet(a_2\bullet a_3))$)  in the deformed resolution. This observation goes much further, namely
we have for $k\ge1$
\begin{multline}
D(\Delta^k(\Delta(\ldots\Delta(\Delta(\lambda_1\bullet \lambda_{2})\bullet \lambda_{3})\bullet \ldots )\bullet (\lambda_n\bullet a_{j}))=\\ 
=\Delta^{k-1}((\Delta(\ldots\Delta(\Delta(\lambda_1\bullet \lambda_{2})\bullet \lambda_{3})\bullet \ldots )\bullet \lambda_{n})\bullet a_{j})+\text{lower terms}
\end{multline}
in the sense of the partial ordering we discussed earlier). So, if we retain only leading terms of the differential, the resulting homology classes are represented by all the monomials of arity~$m$
\begin{equation}
\Delta(\ldots\Delta(\Delta(\lambda_1\bullet \lambda_{2})\bullet \ldots )\bullet \lambda_{n}) 
\end{equation}
with $\lambda_n$ having at least two leaves. They all have the same homological degree~$m-2$ in the resolution, and so there are no further cancellations. 
\end{proof}

So far we have not been able to describe a minimal resolution of the operad~$BV$ by relatively compact closed formulas, even though in principle our proof, once processed by a version of Brown's machinery~\cite{Brown,Cohen}, would clearly yield such a resolution (in the shuffle category). 

\subsubsection{Operads $\Hycom$ and $\Grav$.}
The operads $\Hycom$ and its Koszul dual $\Grav$ were originally defined in terms of moduli spaces of curves of genus~$0$ with marked points $\calM_{0,n+1}$ \cite{Getzler1,GK}. However, we are interested in the algebraic aspects of the story, and we use the following descriptions of these operads as quadratic algebraic operads~\cite{Getzler1}. An algebra over~$\Hycom$ is a chain complex $A$ with a sequence of
graded symmetric products 
 $$(x_1,\dots,x_n)\colon A^{\otimes n}\to A$$ 
of degree
$2(n-2)$, which satisfy the following relations (here $a,b,c,x_1,\dots,x_n$, $n\ge0$, are elements of $A$):
\begin{equation}\label{hycom_rel}
\sum_{S_1\amalg S_2=\{1,\dots,n\}} \pm ((a,b,x_{S_1}),c,x_{S_2})
= \sum_{S_1\amalg S_2=\{1,\dots,n\}} \pm (a,(b,c,x_{S_1}),x_{S_2}).
\end{equation}
Here, for a finite set $S=\{s_1,\dots,s_k\}$, $x_S$ denotes for $x_{s_1},\dots,x_{s_k}$, and $\pm$ means the Koszul
sign rule.

An algebra over~$\Grav$ is a chain complex with graded antisymmetric products
$$[x_1,\dots,x_n]\colon A^{\otimes n}\to A$$ of degree $2-n$, which satisfy the
relations: 
\begin{multline}\label{relgrav}
\sum_{1\le i<j\le k}
\pm [[a_i,a_j],a_1,\dots,\widehat{a_i},\dots,\widehat{a_j},\dots,a_k,
b_1,\dots,b_\ell]
=\\= \begin{cases} [[a_1,\dots,a_k],b_1,\dots,b_l] , & l>0 , \\
0 , & l=0, \end{cases} 
\end{multline}
for all $k>2$, $l\ge0$, and $a_1,\dots,a_k,b_1,\dots,b_l\in A$. For example, setting $k=3$ and $l=0$, we obtain the Jacobi relation for~$[a,b]$. (Similarly, the first relation for~$\Hycom$ is the associativity of the product~$(a,b)$.)

Let us define an admissible ordering of the free operad whose quotient is $\Grav$ as follows. We introduce an additional weight grading, putting the weight of the corolla corresponding to the binary bracket equal to~$0$, all other weights of corollas equal to~$1$, and extending it to compositions by additivity of weight. To compare two monomials, we first compare their weights, then the root corollas, and then path sequences~\cite{DK} according to the reverse path-lexicographic order. For both of the latter steps, we need an ordering of corollas; we assume that corollas of larger arity are smaller. Then for the relation $(k,l)$ in \eqref{relgrav} (written in the shuffle notation with variables in the proper order), its leading monomial is equal to the monomial in the right hand side for $l>0$, and to the monomial $[a_1,\ldots,a_{n-2},[a_{n-1},a_n]]$ for $l=0$.

The following theorem, together with the PBW criterion, implies that the operads $\Grav$ and $\Hycom$ are Koszul, the fact first proved by Getzler~\cite{Getzler}.

\begin{theorem}\label{Grav}
For our ordering, the relations of $\Grav$ form a Gr\"obner basis of relations. 
\end{theorem}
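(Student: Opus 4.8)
The plan is to verify the Gr\"obner basis property by a dimension count, exactly as in the treatment of the operad~$BV$ above, and then to invoke Theorem~\ref{newPBW} for Koszulness. The leading monomials of the relations are the two families recorded in the statement: the \emph{type~A} monomials $[[a_1,\dots,a_k],b_1,\dots,b_l]$ with $k\ge3$ and $l\ge1$ (a bracket whose minimal-leaf branch is a bracket of arity at least~$3$), coming from the relations with $l>0$, and the \emph{type~B} monomials $[a_1,\dots,a_{k-2},[a_{k-1},a_k]]$ with $k\ge3$ (a bracket whose last branch, in the shuffle normal form, is a binary bracket), coming from the generalised Jacobi relations with $l=0$. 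Since these are genuine relations of $\Grav$, the tree monomials that avoid both families already span each component $\Grav(n)$; hence $\dim\Grav(n)\le N(n)$, where $N(n)$ is the number of such \emph{normal} monomials of arity~$n$, with equality precisely when the relations form a Gr\"obner basis.

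The key input is Getzler's computation $\dim\Grav(n)=n!/2$ for $n\ge2$ \cite{Getzler1} (equivalently, $\dim\Grav(n)=\dim H_\bullet(M_{0,n+1})$), so it suffices to prove $N(n)=n!/2$. I would do this by describing the normal monomials combinatorially: a tree monomial built from the antisymmetric brackets is normal exactly when, for every internal bracket, its minimal-leaf branch is a leaf or a binary bracket (avoiding type~A) and its last branch is not a binary bracket (avoiding type~B). A small case confirms the count: in arity~$3$ the normal monomials are $[a_1,a_2,a_3]$, $[[a_1,a_2],a_3]$ and $[[a_1,a_3],a_2]$, so $N(3)=3=3!/2$, while $[a_1,[a_2,a_3]]$ is excluded as type~B. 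The general count would be carried out by a recursion on the arity that tracks how the minimal leaf sits inside the tree, matching the factorisation $n!/2=\prod_{j=3}^{n}j$ of the Poincar\'e count of $M_{0,n+1}$.

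An alternative, more in the spirit of the Buchberger algorithm of \cite{DK}, is to reduce every S-polynomial directly. As all relations are quadratic, every overlap of two leading monomials is a cubic tree monomial, and these fall into a short list of families parametrised by the arities of the three brackets: four \emph{ladder} families, in which the three brackets are nested and the shared middle bracket is the inner factor of one leading term and the outer factor of the other (one family for each choice of type~A or type~B at the two nestings), and one \emph{cherry} family, in which a common root carries both its minimal-leaf branch (a bracket of arity $\ge3$, a type~A divisor) and its last branch (a binary bracket, a type~B divisor). For each family one writes the S-polynomial as the difference of the two rewritings and reduces it using~\eqref{relgrav}.

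The main obstacle lies in whichever route is pursued: in the dimension count it is the bookkeeping of the shuffle leaf-labellings when setting up the recursion for $N(n)$, and in the S-polynomial route it is that there are infinitely many overlap families and that the signs forced by the degree $2-n$ of the $n$-ary bracket and the Koszul rule must be tracked with care; the ladder family of type~AA and the cherry family are exactly where the compatibility of the generalised Jacobi identities~\eqref{relgrav} for different $k$ and $l$ must close up on zero. In either case the expected outcome is that no new leading term is produced, so the listed relations already constitute the reduced Gr\"obner basis, and Koszulness of $\Grav$ (and hence, by Koszul duality, of $\Hycom$) follows from Theorem~\ref{newPBW}.
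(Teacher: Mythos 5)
Your overall strategy --- bounding $\dim\Grav(n)$ by the number of monomials avoiding the leading terms of the relations and matching against Getzler's computation --- is exactly the paper's route (the paper matches the full graded character~\eqref{gravchar} rather than only the total dimension $n!/2$, but that difference is minor). The genuine gap is in your identification of the leading monomials, and it is fatal to the count. The relations~\eqref{relgrav} are relations of a \emph{symmetric} operad; in the shuffle category each relation $(k,l)$ with $l\ge1$ produces several shuffle relations, one for each admissible assignment of the labels $1,\ldots,k+l$ to the inputs, and the leading terms of these instances are not all of your type~A shape. For instance, take $(k,l)=(3,1)$ and give the label $1$ to the variable $b_1$: the right-hand side $[[a_2,a_3,a_4],a_1]$ has shuffle normal form $[a_1,[a_2,a_3,a_4]]$, with the ternary corolla as the \emph{last} branch, and for the paper's ordering (the weights of the two sides are equal, and the binary root corolla is the \emph{larger} corolla since corollas of larger arity are smaller) this monomial is still the leading term of that shuffle relation. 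More generally, the type~A leading terms comprise every two-vertex monomial in which \emph{some} child of the root --- first, middle, or last --- is a corolla of arity $\ge3$; your characterization only excludes the minimal-leaf position.

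This is not a presentational slip: your set of normal monomials is strictly too large, so the dimension count cannot close. Already in arity $4$ the monomial $[a_1,[a_2,a_3,a_4]]$ passes both of your tests (its minimal-leaf branch is a leaf, its last branch is ternary rather than binary), so your count gives $N(4)=13$, whereas $\dim\Grav(4)=4!/2=12$; the criterion ``equality iff Gr\"obner basis'' therefore can never be met, here or in any higher arity. With the full set of shuffle leading terms the normal monomials are exactly $[\lambda_1,\ldots,\lambda_{n-1},a_j]$ with each $\lambda_i$ a Lie monomial and the last argument a single leaf --- equivalently, every non-root vertex is binary and no vertex has a binary last child --- and it is these that number $n!/2$ (with the refined, degree-by-degree count carried out in Lemma~\ref{grav}). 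Your alternative S-polynomial route inherits the same defect, since the overlaps must be formed from the full set of shuffle leading terms, and is in any case only a plan: as you note yourself, infinitely many overlap families would need to be reduced, and none of the reductions is carried out.
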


\begin{proof}
The tree monomials that are not divisible by leading terms of relations are precisely
\begin{equation}\label{ltGrav}
[\lambda_1,\lambda_2,\ldots,\lambda_{n-1},a_j],  
\end{equation}
where all $\lambda_i$, $1\le i\le(n-1)$ are Lie monomials as in~\eqref{Lie} (but made from brackets, not products).
\begin{lemma}\label{grav}
The graded character of the space of such elements of arity~$n$ is 
\begin{equation}\label{gravchar}
(2+t^{-1})(3+t^{-1})\ldots(n-1+t^{-1}).  
\end{equation}
\end{lemma}
\begin{proof}
To compute the number of basis elements where the top degree corolla is of arity $k+1$ (or, equivalently, degree~$1-k$), $k\ge1$, let us notice that this number is equal to the number of basis elements
 $$
[\lambda_1,\lambda_2,\ldots,\lambda_k] 
 $$
where the arity of $\lambda_k$ is at least~$2$ (a simple bijection: join $\lambda_{n-1}$ and $a_j$ into $[\lambda_{n-1},a_j]$). The latter number is equal to
\begin{equation}\label{sum_binom}
\sum_{\substack{m_1+\ldots+m_k=n, \\m_i\ge1, m_k\ge2}}\frac{(m_1-1)!(m_2-1)!\ldots(m_k-1)!m_1m_2\cdot\ldots\cdot m_k}{(m_1+m_2+\ldots+m_k)(m_2+\ldots+m_k)\cdot\ldots\cdot m_k}\binom{m_1+\ldots+m_k}{m_1,m_2,\ldots,m_k}  
\end{equation}
where each factor $(m_i-1)!$ counts the number of Lie monomials of arity~$m_i$, and the remaining factor is the number of shuffle permutations of the type $(m_1,\ldots,m_k)$ (\cite{DVJ}). This can be rewritten in the form 
 $$
\sum_{m_1+\ldots+m_k=n, m_i\ge1, m_k\ge2}\frac{(m_1+\ldots+m_k-1)!}{(m_2+\ldots+m_k)(m_3+\ldots+m_k)\cdot\ldots\cdot m_k}
 $$
and if we introduce new variables $p_i=m_i+\ldots+m_k$, it takes the form
 $$
\sum_{2\le p_{k-1}<\ldots<p_1\le n-1}\frac{(n-1)!}{p_2\ldots p_k},
 $$
which clearly is the coefficient of $t^{1-k}$ in the product
\begin{multline}
(n-1)!\left(1+\frac{1}{2t}\right)\left(1+\frac{1}{3t}\right)\cdot\ldots\cdot\left(1+\frac{1}{(n-1)t}\right)=\\=\left(2+t^{-1}\right)\left(3+t^{-1}\right)\ldots\left(n-1+t^{-1}\right). 
\end{multline}
\end{proof}
Since the graded character of $\Grav$ is given by the same formula~\cite{Getzler1}, we indeed see that the leading terms of defining relations give an upper bound on dimensions homogeneous components of $\Grav$ that coincides with the actual dimensions, so there is no room for further Gr\"obner basis elements. 
\end{proof}

\subsubsection{$BV_\infty$ and hypercommutative algebras}

\begin{theorem}\label{BV=Grav+delta}
On the level of collections of graded vector spaces, we have
\begin{equation}\label{BVinfty}
H(\mathbf{B}(BV))[-1]\simeq\Grav^*\otimes\End_{\k[1]}\oplus \delta\k[\delta], 
\end{equation}
where $\Grav^*$ is the cooperad dual to $\Grav$, and $\delta\k[\delta]$ is a cofree coalgebra generated by an element $\delta$ of degree~$2$.
\end{theorem}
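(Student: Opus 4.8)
The plan is to identify the graded vector space $H(\mathbf{B}(BV))[-1]$ by directly matching the basis produced in Theorem~\ref{BVinf} against the two summands on the right-hand side of~\eqref{BVinfty}. Since Theorem~\ref{BVinf} already gives an explicit monomial basis for $H(\mathbf{B}(BV))$, the task is essentially a bookkeeping identification of that basis with a basis of $\Grav^*\otimes\End_{\k[1]}\oplus\delta\k[\delta]$, tracking arities, homological degrees, and internal degrees carefully.

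First I would split the basis from Theorem~\ref{BVinf} into its two families. The monomials $\Delta^k(a_1)$ with $k\ge1$ live in arity~$1$, and after the degree shift $[-1]$ they match the cofree coalgebra $\delta\k[\delta]$ on a degree~$2$ generator $\delta$: the operator $\Delta$ raises degree by~$1$, so $\Delta^k$ sits in the correct internal degree, and the monomial $\Delta^k(a_1)$ corresponds to $\delta^k$. One checks that the homological degree of $\Delta^k(a_1)$ in the bar complex together with the shift reproduces the degree~$2$ generator and its divided powers. Second I would treat the remaining family~\eqref{BVbasis}, the monomials
$$
\Delta(\ldots\Delta(\Delta(\lambda_1\bullet\lambda_2)\bullet\ldots)\bullet(\lambda_n\bullet a_j)),
$$
and compare them arity-by-arity with the monomials~\eqref{ltGrav} indexing $\Grav$ (equivalently $\Grav^*$, which has the same graded dimensions). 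The key combinatorial input is Lemma~\ref{grav}: the graded character of the $\Grav$-side in arity~$n$ is the product~\eqref{gravchar}. The plan is to set up a bijection between the $BV$-side monomials in arity~$n$ and the $\Grav$-monomials, stripping off the outermost nest of $\Delta$'s, which accounts for the tensor factor $\End_{\k[1]}$ (the operadic suspension recording the degree shifts contributed by the wrapping $\Delta$ operators), and matching the inner pattern of Lie monomials $\lambda_i$ with the brackets of $\Grav$.

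The main obstacle I expect is getting the degree shifts and signs exactly right so that the isomorphism respects the internal grading and the $\End_{\k[1]}$ twist, rather than merely matching total dimensions. Concretely, each application of $\Delta$ shifts homological degree, and the operadic suspension $\End_{\k[1]}$ encodes precisely how arity-$n$ operations are re-graded; I would verify that the number of outer $\Delta$'s in~\eqref{BVbasis} in arity~$n$ matches the degree shift prescribed by $\End_{\k[1]}$ applied to the corresponding element of $\Grav^*$. I would confirm this by comparing generating functions: computing the graded character of the $BV$-family~\eqref{BVbasis} in arity~$n$ and checking it equals $t^{?}$ times~\eqref{gravchar} with the exponent dictated by the suspension, which reduces the identification to the already-established count in Lemma~\ref{grav}. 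Once the characters agree in every arity and internal degree, and the explicit bijection of monomials is seen to be degree-preserving under the stated conventions, the isomorphism~\eqref{BVinfty} of graded collections follows.
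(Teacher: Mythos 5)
Your proposal is correct and takes essentially the same route as the paper: both identify $\Delta^k(a_1)$ (after the shift) with $\delta^k$, and both set up the explicit bijection sending a monomial of the form~\eqref{BVbasis} built from Lie monomials $\lambda_1,\ldots,\lambda_k,a_j$ to the dual of the normal-form monomial $[\lambda_1,\ldots,\lambda_k,a_j]$ of~\eqref{ltGrav}, with the remaining work being the degree bookkeeping you describe. The only (inessential) difference is that the paper certifies the bijection is degree-preserving by a direct count on each monomial (total degree $n+k-2$ on both sides) rather than by comparing graded characters through Lemma~\ref{grav}.
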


\begin{proof}
As above, instead of looking at the bar complex, we shall study the basis of the space of generators of the minimal resolution obtained in Theorem~\ref{BVinf}. In arity~$1$, the element $\delta^k$ (of degree $2k$) corresponds to $\Delta^k(a_1)[-1]$ (of degree $k+(k-1)+1=2k$, the first summand coming from the fact that $\Delta$ is of degree~$1$, the second from the fact that $\Delta^k$ is an overlap of $k-1$ relations, and the last one is the degree shift). The case of elements of internal degree~$0$ (which in both cases are Lie monomials) is also obvious; a Lie monomial of arity $n$ in the space of generators of the free resolution is of homological degree~$n-2+1=n-1$, the second summand coming from the degree shift, and this matches the degree shift given by~$\End_{\k[1]}(n)$. For elements of internal degree $k-1$, let us extract from a typical monomial 
 $$
\underbrace{\Delta(\ldots\Delta(\Delta(}_{k-1\textrm{ times }}\lambda_1\bullet \lambda_{2})\bullet \ldots )\bullet (\lambda_{k}\bullet a_j)), 
 $$
of this degree and of arity $n$ the Lie monomials $\lambda_1, \lambda_2, \ldots, \lambda_{k-1}, \lambda_{k}, a_j$, and assign to this the element of $\Grav^*\otimes\End_{\k[1]}$ corresponding to the dual element of the monomial $[\lambda_1,\lambda_2,\ldots,\lambda_{k-1},\lambda_k,a_j]$ in the gravity operad. This establishes a degree-preserving bijection, because if arities of $\lambda_1,\ldots,\lambda_k$ are $n_1,\ldots,n_k$, the total (internal plus homological) degree of the former element is $(k-1)+(k-2+1+(n_1-1)+\ldots+(n_k-1))+1=n+k-2$ (where we add up the $\Delta$ degree, the overlap degree, and the degree shift by~$1$), and the total degree of the latter one is $(k-1)+(n-1)=n+k-2$.
\end{proof}

We conclude this paragraph with a discussion on how our results match those of Barannikov and Kontsevich (\cite{BK}, see also~\cite{LS,Manin}) who proved in a rather indirect way that for a dg$BV$-algebra that satisfies the ``$\partial-\overline{\partial}$-lemma'', there exists a $\Hycom$-algebra structure on its cohomology. Their result hints that our isomorphism~\eqref{BVinfty} exists not just on the level of graded vector spaces, but rather has some deep operadic structure behind it. For precise statements and more details we refer the reader to~\cite{DCV}.

From Theorem~\ref{Grav}, it follows that the operads $\Grav$ and $\Hycom$ are Koszul, so $\Omega(\Grav^*\otimes\End_{\k[1]})$ is a minimal model for $\Hycom$. More precisely, we shall show that the differential of $BV_\infty$ on generators coming from $\Grav^*$ deforms the differential of~$\Hycom_\infty$ in the following sense. Let $D$ and $d$ denote the differentials of $BV_\infty$ and $\Hycom_\infty$ respectively. We can decompose $D=D_2+D_3+\ldots$ (respectively $d=d_2+d_3+\ldots$) according to the $\infty$-cooperad structure it provides on the space of generators. Also, let $m^*$ denote the obvious coalgebra structure on $\delta\k[\delta]$. We shall call a tree monomial in $BV_\infty$ \emph{mixed}, if it contains both corollas from $\Grav^*\otimes\End_{\k[1]}$ and from $(\delta\k[\delta])$. Then we have 
\begin{equation}\label{d2}
D_2=d_2+m^*, 
\end{equation}
while for $k\ge3$ the co-operation $D_k$ is zero on the generators $\delta\k[\delta]$, and maps generators from $\Grav^*$ into linear combinations of mixed tree monomials. Indeed, the result of Barannikov and Kontsevich \cite{BK} essentially implies that there exists a mapping from $\Hycom$ to the homotopy quotient $BV/\Delta$. In fact, it is an isomorphism, which can be proved in several different ways, both using Gr\"obner bases and geometrically; see \cite{Markarian} for a short geometric argument proving that. This means that the following maps exist (the vertical arrows are quasiisomorphisms between the operads and their minimal models):
 $$
\xymatrix{
BV_\infty \ar@{->>}[d] \ar@{->>}[dr]_{\pi} & & \Hycom_\infty \ar@{->>}[d]  \\ 
BV    &BV/\Delta   & \Hycom \ar@{->}[l]_{\widetilde{\hphantom{aaaaa}}}
}
 $$
Lifting $\pi\colon BV_\infty\to BV/\Delta\simeq\Hycom$ to the minimal model $\Hycom_\infty$ of $\Hycom$, we obtain the commutative diagram
 $$
\xymatrix{
BV_\infty \ar@{->>}[rr]^{\psi} \ar@{->>}[d] \ar@{->>}[dr]_{\pi}& & \Hycom_\infty \ar@{->>}[d]  \\ 
BV    &BV/\Delta  & \Hycom\ar@{->}[l]_{\widetilde{\hphantom{aaaaa}}}  
}
 $$
so there exists a map of dg-operads (and not just graded vector spaces, as it follows from our previous computations) between $BV_\infty$ and $\Hycom_\infty$. Commutativity of our diagram together with simple degree considerations yields what we need.

\bibliographystyle{amsplain}
\bibliography{monomial-new}

\end{document}